\documentclass[11pt]{article}
\usepackage{amsmath, amssymb, theorem, latexsym, epsfig}
\usepackage{eufrak}
\numberwithin{equation}{section}

\theoremstyle{plain}
\theorembodyfont{\itshape}
\newtheorem{theorem}{Theorem}[section]
\newtheorem{proposition}[theorem]{Proposition}
\newtheorem{lemma}[theorem]{Lemma}
\newtheorem{corollary}[theorem]{Corollary}
\newtheorem{convention}[theorem]{Convention}
                                                                                
\theorembodyfont{\rmfamily}
\newtheorem{definition}[theorem]{Definition}

\newtheorem{example}[theorem]{Example}
\newtheorem{remark}[theorem]{Remark}
\def\rr{\mathbb R}
\newenvironment{proof}{{\noindent \textbf{Proof}\,\,}}{\hspace*{\fill}$\Box$\medskip}
\title{Density of  thin film  billiard reflection pseudogroup in Hamiltonian 
symplectomorphism pseudogroup}
\author{Alexey Glutsyuk\thanks{
CNRS, France (UMR 5669 (UMPA, ENS de Lyon), UMI 2615 (ISC J.-V.Poncelet)). E-mail: 
aglutsyu@ens-lyon.fr} \thanks{HSE University, Moscow, Russia}\thanks{Kharkevich Institute for Information Transmisson Problems (IITP RAS), Moscow, 
Russia} \thanks{The author is partially supported by Laboratory of Dynamical Systems and Applications, HSE University, of the Ministry of science and higher education of the RF grant ag. No 075-15-2019-1931} \thanks{Partially supported by RFBR and JSPS (research project 19-51-50005)}}
\begin{document}
\maketitle
\begin{abstract} Reflections from hypersurfaces act by symplectomorphisms on the space of oriented lines with respect to the 
canonical symplectic form. 
We consider an arbitrary $C^{\infty}$-smooth 
hypersurface $\gamma\subset\rr^{n+1}$ that is either a global strictly  convex 
closed hypersurface, or a germ of hypersurface. 
We deal with the pseudogroup generated by compositional ratios 
of reflections from $\gamma$ and of reflections from 
its small deformations. In the case, when $\gamma$ is a global convex hypersurface, 
we show that the $C^{\infty}$-closure of 
the latter pseudogroup contains the 
pseudogroup of Hamiltonian diffeomorphisms between domains in the phase cylinder: 
 the space of oriented lines intersecting $\gamma$ transversally. We prove an analogous local result 
 in the case, when $\gamma$ is a germ. The derivatives of the  above compositional differences in the deformation 
 parameter are Hamiltonian vector fields calculated by Ron Perline. To prove the main results, we 
 find the Lie algebra generated by them and prove its $C^{\infty}$-density in the Lie algebra of Hamiltonian vector fields. 
 We also prove analogues of the above results for hypersurfaces in Riemannian manifolds.
\end{abstract}
\tableofcontents
\def\cc{\mathbb C}
\def\wh#1{\widehat#1}
\def\oc{\overline{\cc}}
\def\cp{\mathbb{CP}}
\def\wt#1{\widetilde#1}
\def\rr{\mathbb R}
\def\var{\varepsilon}
\def\tt{\mathcal T}
\def\var{\varepsilon}
\def\mce{\mathcal E}
\def\mcf{\mathcal F}
\def\mcfnab{\mcf_{ab}|_{N_{ab}}}
 \def\mcg{\mathcal G}
 \def\so{\operatorname{SO}}
 \def\dist{\operatorname{dist}}
\def\ha{\hat a}
\def\hb{\hat b}
\def\hc{\hat c}
\def\hd{\hat d}
\def\ind{\operatorname{ind}}
\def\nn{\mathbb N}
\def\mcd{\mathcal D}
\def\mcp{\mathcal P}
\def\rp{\mathbb{RP}}
\def\mct{\mathcal T}
\def\zz{\mathbb Z}
\def\re{\operatorname{Re}}
\def\im{\operatorname{Im}}
\def\La{\Lambda}
\def\gg{\EuFrak g}
\def\gh{\EuFrak H}
\def\ggg{\EuFrak G_{glob}}
\def\ggo{\EuFrak G_{glob,0}}
\def\mod{\operatorname{mod}}
\def\RR{\rr}
\def\sym{\operatorname{Sym}}
\def\skt{S^k(T^*\gamma)}
\def\ttg{T^*\gamma}
\def\la{\lambda}
\def\mcq{\mathcal Q}
\def\supp{\operatorname{supp}}
\def\ker{\operatorname{Ker}}
\def\lll{\mathbb L}
\def\oo{\operatorname{O}}
\def\so{\operatorname{SO}}
\def\gl{\operatorname{GL}}

\section{Main results: density of thin film planar billiard reflection pseudogroup}

It is well-known that billiard reflections acting on the space of oriented geodesics 
preserve the canonical symplectic form \cite{ar2, ar3, mm, melrose1, melrose2, tab95}.  
However only a tiny part of symplectomorphisms 
are realized by reflections. There  is an important open question stated in \cite{ptt}: 
which symplectomorphisms can be realized by compositions of reflections?

In the present paper we consider the case of  billiards in Euclidean spaces. Namely, we deal with 
a hypersurface $\gamma\subset\rr^{n+1}$: either a strictly convex closed hypersurface, or a germ of 
hypersurface. We investigate compositional ratios 
of reflections from $\gamma$ and reflections from its small deformations. They were 
 introduced and studied by Ron Perline \cite{perline}. He had shown that 
  their derivatives in the parameter are Hamiltonian vector fields and calculated their 
Hamiltonian functions.  See Subsection 1.4 below.

We show that the Lie algebra generated by the above Hamiltonian vector fields is dense in the 
Lie algebra of all the Hamiltonian vector fields (symplectic vector fields in the case, when 
$\gamma$ is a germ of curve). 
We apply this result to the pseudogroup generated by the above compositional ratios of reflections. 
 In the case of a strictly convex closed hypersurface we show that 
 the $C^{\infty}$-closure of 
the latter pseudogroup contains the pseudogroup of Hamiltonian 
symplectomorphisms between open subdomains of the phase cylinder: 
the space of oriented lines intersecting $\gamma$ transversally.  We prove analogous statement for germs 
of hypersurfaces. The corresponding results are stated in Subsections 1.1 and 1.2 respectively.

In the case of a germ of planar curve, when $n=1$, we show that  
the above pseudogroup coming from reflections is $C^{\infty}$-dense in the  pseudogroup 
of  symplectomorphisms between simply connected subdomains of a small region in the 
space of oriented lines.  See the statements of the corresponding results in Subsection 1.3. 

The above results on $C^{\infty}$-closures of pseudogroups are 
 proved in Section 4.

For the proof of main results we find the Lie algebra generated by the above-mentioned 
Hamiltonian functions from \cite{perline} and prove its $C^{\infty}$-density in the 
 space of $C^{\infty}$-smooth functions. The corresponding results are presented in  Subsection 1.4 
 and proved in Sections 2 and 3 in the cases, when $n=1$ and $n\geq2$ respectively.
 
In Subsection 1.5 we state analogues of Perline's formula for Hamiltonian 
function and of main results for hypersurfaces in Riemannian manifolds. 
We prove them in Section 5.
 
 In Subsection 1.6 we present  a brief historical survey 
 and state an open problem.

\subsection{Main density results: case of global hypersurface}
Here we deal with billiards in  $\rr^{n+1}$, $n\geq1$: reflections from hypersurfaces, which act 
on the space of oriented lines in $\rr^{n+1}$, see the next paragraph. 
 It is well-known that the space of all  oriented lines is canonically 
diffeomorphic to the tangent bundle of the unit sphere $S^{n}$, and it carries a canonical symplectic form 
induced from the standard symplectic form on $TS^{n}$.  All the reflections from  hypersurfaces 
act symplectomorphically \cite{ar2, ar3, mm, melrose1, melrose2, tab95}. 

Let  $\gamma\subset\rr^{n+1}$ be a closed strictly convex hypersurface. 
Recall that the reflection $\mct_{\gamma}$ from $\gamma$ acts on the space of oriented lines in 
$\rr^{n+1}$ as follows. If a line $L$ is either disjoint from 
$\gamma$, or tangent to it, then it is fixed by $\mct_{\gamma}$. If $L$ intersects $\gamma$  transversally, 
then we take its last point $A$ of intersection with $\gamma$ (in the sense of orientation of the line $L$). We  
set $\mct_{\gamma}(L)$ to be its  image under reflection from the tangent hyperplane $T_A\gamma$ and orient 
it by a vector in $T_A\rr^{n+1}$ directed inside the compact domain bounded by $\gamma$. 

Let now $\vec N$ be the exterior unit normal vector field on the hypersurface $\gamma$. Let  
$f:\gamma\to\rr$ be a $C^{\infty}$-smooth function. Consider the following 
deformation of the  surface $\gamma$: 
\begin{equation}\gamma_\var=\gamma_{\var,f}:=\{ x+\var f(x)\vec N(x) \ | \ x\in\gamma\}.\label{gaef}\end{equation}
We deal with reflections $\mct_{\gamma}$ and $\mct_{\gamma_\var}=\mct_{\gamma_{\var,f}}$ acting on oriented lines. Recall that the {\it phase cylinder} is the domain 
$$\Pi=\Pi_\gamma:= \ \{\text{the oriented lines intersecting } \ \gamma \text{ transversally}\},$$
\begin{equation}\Delta\mct_\var=\Delta\mct_{\var,f}:=\mct_{\gamma_\var}^{-1}\circ\mct_\gamma, \ \ \ 
v_f:=\frac{d\Delta\mct_\var}{d\var}|_{\var=0}.\label{detevf}\end{equation}
For every compact subset $K\Subset\Pi$ the symplectic mapping $\Delta\mct_\var$ is well-defined on $K$, whenever 
$\var$ is small enough dependently on $K$.  
Hence, $v_f$ are symplectic vector fields on $\Pi$. They are Hamiltonian with 
Hamiltonian functions given in \cite[p.623]{perline}, see also (\ref{hfw}). We prove the following theorem.

\begin{theorem} \label{tlidense} The Lie algebra  generated by the vector fields $v_f$, $f\in C^{\infty}(\gamma)$, 
is $C^{\infty}$-dense in the Lie algebra of Hamiltonian vector fields on $\Pi_\gamma$: dense in the topology of uniform convergence with all derivatives  on compact subsets.. 
\end{theorem}

Using Theorem \ref{tlidense}, we prove  density results for pseudogroups generated by the compositional 
differences $\Delta\mct_\var$. To state them, let us recall the following well-known definitions. 

 Consider a collection of $C^{\infty}$-smooth 
 diffeomorphisms  between domains on some 
 manifold (e.g., the space of oriented lines in $\rr^{n+1}$). We consider each diffeomorphism 
 together with its domain of definition and all its restrictions to smaller domains. 
 We will deal with those  compositions of diffeomorphisms and of their inverses 
  that are well-defined on some domains. Recall that the collection of 
 all the above compositions is called a {\it pseudogroup}. (In the case when we take 
 only compositions of mappings and not of their inverses, the above collection of compositions 
 is called a {\it pseudo-semigroup.}) 
 
 \begin{definition} For a given 
 sequence of domains $V_n$ we say  that the {\it intersections} $V_n\cap V$ 
 {\it converge} to $V$, if each compact subset $K\Subset V$ is contained in $V_n$ whenever $n$ 
 is large enough (depending on $K$).  Let $g$ be a $C^{\infty}$-smooth diffeomorphism 
 defined on a domain $V$: we deal with it as with the pair $(g,V)$. Recall that a sequence $g_n$ of elements in a pseudogroup  
 converges to a mapping $g$ on $V$ 
 in the $C^{\infty}$-topology, if $g$ is well-defined on $V$, 
 $g_n$ are defined on a sequence of domains $V_n$ 
 such that $V_n\cap V\to V$, and $g_n\to g$ uniformly on compact subsets in $V$ with 
 all the derivatives. The {\it $C^{\infty}$-closure} of a given pseudogroup consists of the mappings 
 forming the pseudogroup and the limits of the above converging sequences. 
 (The latter closure is a pseudogroup itself.)
 \end{definition}

For every positive-valued mapping $\delta:C^{\infty}(\gamma)\to\rr_+$  set 
\begin{equation}\mcg(\delta):=\text{ the pseudogroup generated by the collection of mappings} \label{pseudod}\end{equation}
$$\{\Delta\tt_{\var,h} \ | \ h\in C^{\infty}(\gamma), 0\leq\var\leq\delta(h)\}.$$
Let us recall the following well-known definition. 
\begin{definition} \label{defmham} Let $M$ be a symplectic manifold, and let $V\subset M$ be  
an open domain. A symplectomorphism $F:V\to V_1\subset M$ is ($M$-) {\it Hamiltonian,} if there exists a smooth family 
$F_t:V\to V_t\subset M$ of symplectomorphisms parametrized by $t\in[0,1]$, $V_0=V$,  
$F_0=Id$, $F_1=F$,
 such that for every $t\in[0,1]$ the derivative $\frac{dF_t}{dt}$ is a Hamiltonian vector field on $V_t$. 
 (In the case when $V=V_t=M$, this definition coincides with the 
 usual definition of Hamiltonian diffeomorphism of a manifold  \cite[definition 4.2.4]{ban}.)
 \end{definition}
 \begin{remark} Let  $M$ be a two-dimensional 
 topological cylinder equipped with a symplectic form, and let $V\subset M$ be a 
subcylinder with compact closure and smooth boundary. Further assume that  $V$ is a deformation retract of the 
ambient cylinder $M$. Then not every area-preserving map $F:V\to U\subset M$ 
 is Hamiltonian. A necessary condition 
for being Hamiltonian is that for every boundary component $L$ of the subcylinder $V$ 
the self-intersecting domain bounded by $L$ and  by its image $F(L)$ has zero signed 
area. This follows  from results presented in \cite[chapters 3, 4]{ban}. 
 The above-defined $M$-Hamiltonian symplectomorphisms between domains in a symplectic manifold  form a pseudogroup: 
 a composition of two $M$-Hamiltonian symplectomorphisms $U\to V$ and $V\to W$ is $M$-Hamiltonian. 
\end{remark}

\begin{theorem} \label{tc1} For every  mapping 
$\delta:C^{\infty}(\gamma)\to\rr_+$ the $C^{\infty}$-closure of the pseudogroup $\mcg(\delta)$ 
contains the whole pseudogroup of $\Pi$-Hamiltonian diffeomorphisms 
between domains in $\Pi$. In other words, for every 
domain $V\subset\Pi$ and every $\Pi$-Hamiltonian symplectomorphism $F:V\to W\subset\Pi$ there exists 
a sequence $F_n$ of finite compositions of mappings $\Delta\mct_{\var_j,h_j}$, $0\leq\var_j\leq\delta(h_j)$, $F_n$ being 
defined on domains $V_n$  with $V_n\cap V\to V$, such that $F_n\to F$ on $V$ in the 
$C^{\infty}$-topology. 
\end{theorem}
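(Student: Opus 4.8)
The plan is to show that the $C^{\infty}$-closure $\mathcal{G}$ of $\mcg(\delta)$, which is itself a pseudogroup by the preceding definition, contains the time-$t$ flow of every Hamiltonian vector field on $\Pi$, and then to assemble an arbitrary $\Pi$-Hamiltonian symplectomorphism out of such flows. The engine is Theorem \ref{tlidense}: the Lie algebra $\gg_0$ generated by the $v_f$ is $C^{\infty}$-dense in the space of all Hamiltonian vector fields, so it suffices to realize the flows of elements of $\gg_0$ inside $\mathcal{G}$ and then pass to a limit.

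First I would realize the flow of a single generator. Since $\Delta\mct_{\var,f}=\operatorname{Id}+\var\,v_f+O(\var^2)$ with $\Delta\mct_{0,f}=\operatorname{Id}$, the $n$-fold composition $(\Delta\mct_{t/n,f})^{\circ n}$ converges, on every compact $K\Subset\Pi$ and in the $C^{\infty}$-topology, to the time-$t$ flow $\Phi^{v_f}_t$ as $n\to\infty$; for $n$ large the step $t/n$ satisfies $0\le t/n\le\delta(f)$, so each factor lies in $\mcg(\delta)$. For $t<0$ one uses the inverse maps $(\Delta\mct_{|t|/n,f})^{-1}$, available because $\mcg(\delta)$ is a pseudogroup (equivalently, $v_{-f}=-v_f$). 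Hence $\Phi^{v_f}_t\in\mathcal{G}$ for every $f$ and every $t$, on domains exhausting $\Pi$.

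Next I would propagate this from the generators to all of $\gg_0$. Because $\mathcal{G}$ is closed under composition and under $C^{\infty}$-limits, the Lie--Trotter product formula $\Phi^{X+Y}_t=\lim_n(\Phi^X_{t/n}\circ\Phi^Y_{t/n})^{\circ n}$ and the group-commutator formula $\Phi^{[X,Y]}_{s^2}=\lim_n(\Phi^Y_{-s/n}\circ\Phi^X_{-s/n}\circ\Phi^Y_{s/n}\circ\Phi^X_{s/n})^{\circ n^2}$ allow me to express the flow of a sum or of a bracket as a $C^{\infty}$-limit of compositions of flows of simpler elements. By induction on bracket-length and the number of summands, $\Phi^X_t\in\mathcal{G}$ for every $X\in\gg_0$. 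Then, given any Hamiltonian vector field $H$ on a domain, Theorem \ref{tlidense} furnishes $X_k\in\gg_0$ with $X_k\to H$ in $C^{\infty}$; continuous dependence of flows on the field, uniform on compact time intervals and compact spatial subsets, yields $\Phi^{X_k}_t\to\Phi^H_t$, so $\Phi^H_t\in\mathcal{G}$ as well.

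Finally I would assemble $F$ itself. By Definition \ref{defmham} the $\Pi$-Hamiltonian map $F:V\to W$ is the time-$1$ map of a smooth family $F_t$ whose velocity is a time-dependent Hamiltonian field $H_t$. Partitioning $[0,1]$ into intervals $[t_j,t_{j+1}]$ and freezing the coefficient, the composition $\Phi^{H_{t_{N-1}}}_{\Delta t}\circ\cdots\circ\Phi^{H_{t_0}}_{\Delta t}$ converges to $F$ in $C^{\infty}$ as the mesh tends to $0$; each factor lies in $\mathcal{G}$ by the previous step, and compositions and limits stay in the pseudogroup $\mathcal{G}$. This produces the required sequence $F_n$ of compositions of maps $\Delta\mct_{\var_j,h_j}$ with $0\le\var_j\le\delta(h_j)$. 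The main obstacle is not the algebra but the bookkeeping of domains: each $\Delta\mct_{\var,f}$ is defined on $K\Subset\Pi$ only for $\var$ small depending on $K$, and the flows, Trotter products and commutators each shrink the domain slightly, so one must verify throughout that the composed maps remain defined on a sequence of domains $V_n$ with $V_n\cap V\to V$ and that all the above convergences hold in $C^{\infty}$ uniformly on compact subsets. Controlling these nested domain restrictions while keeping every time-step within $\delta$ is the technical heart of the argument.
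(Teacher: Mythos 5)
Your first two steps---realizing the flows of the generators $v_f$ by iterating $\Delta\mct_{\var,f}$, and then passing to the whole Lie algebra generated by the $v_f$ via the Trotter product and group-commutator formulas---are exactly the content of the paper's Proposition \ref{pmcg} (Claims 1 and 2 of its proof), so that part of your plan coincides with the paper. The divergence, and the genuine gap, is in the endgame. In your step 3 you invoke Theorem \ref{tlidense} to approximate the frozen fields $H_{t_j}$, but these fields are defined only on the moving domains $V_{t_j}=F_{t_j}(V)$, whereas Theorem \ref{tlidense} asserts density in the Lie algebra of Hamiltonian vector fields defined on all of $\Pi$, in the topology of $C^\infty$-convergence on compact subsets of $\Pi$. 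As stated, this step is not licensed: you must first replace each locally defined Hamiltonian field by a globally defined one agreeing with it near the relevant compact set. Likewise, the ``bookkeeping of domains'' that you defer---showing that the broken flows of fields living only on the $V_{t_j}$ remain in the region where those fields exist, uniformly as the mesh tends to $0$---is precisely the part of the argument your outline does not contain; flagging it as the technical heart does not discharge it.

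The paper dissolves both difficulties at once with one structural idea that is absent from your proposal: Proposition \ref{hamsup}. Identifying the domains $V_t$ with $V$ by the maps $F_t$, viewing the Hamiltonians $g_t$ as a single function on $V\times[0,1]$, and multiplying by bump functions, one shows that Hamiltonian symplectomorphisms of all of $\Pi$ with compact support are $C^\infty$-dense in the pseudogroup of $\Pi$-Hamiltonian maps between domains. Once $F$ is replaced by such a global compactly supported approximation, the time-dependent fields are complete, every flow is defined on all of $\Pi$, Theorem \ref{tlidense} applies verbatim, and the time discretization (the paper's Proposition \ref{propham}) goes through with no domain issues at all. Your argument can be repaired by inserting exactly this cutoff reduction before your steps 3 and 4; without it, the two problems above are real, and any natural way of resolving them (cutting off the frozen Hamiltonians near the compact sets being tracked) essentially reproduces the paper's Proposition \ref{hamsup}.
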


\begin{definition} \label{akclo} We say that two hypersurfaces are $(\alpha,k)$-close if the distance 
between them in the $C^k$-topology is no greater than $\alpha$. This means that 
the hypersurfaces are diffeomorphically parametrized by the same manifold, and their 
parametrizations can be chosen  so that the $C^k$-distance between them is no greater than $\alpha$. 
\end{definition}

\begin{corollary} \label{c1} For every $\alpha>0$ (arbitrarily small) and $k\in\nn$ 
 the $C^{\infty}$-closure of the pseudogroup generated by reflections from hypersurfaces $(\alpha,k)$-close to 
$\gamma$ contains the whole pseudogroup of $\Pi$-Hamiltonian symplectomorphisms  
between domains in $\Pi$. 
\end{corollary}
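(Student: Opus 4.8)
The plan is to deduce the corollary directly from Theorem \ref{tc1} by choosing the auxiliary mapping $\delta$ appropriately. The crucial observation is that each generator $\Delta\mct_{\var,h}=\mct_{\gamma_\var}^{-1}\circ\mct_\gamma$ of the pseudogroup $\mcg(\delta)$ is itself the composition of the reflection $\mct_\gamma$ from $\gamma$ with the inverse $\mct_{\gamma_{\var,h}}^{-1}$ of the reflection from the deformed hypersurface $\gamma_{\var,h}$. Since a pseudogroup contains, by definition, all compositions of its generating mappings and their inverses, it therefore suffices to guarantee that $\gamma$ itself and each deformation $\gamma_{\var,h}$ occurring in the chosen range of $\var$ are $(\alpha,k)$-close to $\gamma$.

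First I would fix $\alpha>0$ and $k\in\nn$ and construct $\delta:C^{\infty}(\gamma)\to\rr_+$ as follows. For a fixed $h\in C^{\infty}(\gamma)$ the family $\var\mapsto\gamma_{\var,h}$ from (\ref{gaef}) is a smooth one-parameter family of hypersurfaces with $\gamma_{0,h}=\gamma$, all parametrized by the same manifold $\gamma$ through the explicit embedding appearing in (\ref{gaef}), which depends smoothly on $\var$. Hence the $C^k$-distance between $\gamma_{\var,h}$ and $\gamma$ is $O(\var)$ as $\var\to0$, with a constant depending only on $h$ and on the fixed geometry of $\gamma$. Consequently there is a threshold $\delta(h)>0$ such that $\gamma_{\var,h}$ is $(\alpha,k)$-close to $\gamma$ for every $\var\in[0,\delta(h)]$; I would take this value as $\delta(h)$. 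Note also that $\gamma$ is trivially $(\alpha,k)$-close to itself, its $C^k$-distance from itself being $0\leq\alpha$.

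With this choice, denote by $\mcp_{\alpha,k}$ the pseudogroup generated by reflections from all hypersurfaces $(\alpha,k)$-close to $\gamma$. By the previous paragraph every generator $\Delta\mct_{\var,h}$ with $0\leq\var\leq\delta(h)$ equals $\mct_{\gamma_{\var,h}}^{-1}\circ\mct_\gamma$ with both factors coming from $(\alpha,k)$-close hypersurfaces, hence lies in $\mcp_{\alpha,k}$. Therefore $\mcg(\delta)\subset\mcp_{\alpha,k}$, and since the $C^{\infty}$-closure is monotone under inclusion, the closure of $\mcg(\delta)$ is contained in the closure of $\mcp_{\alpha,k}$. Applying Theorem \ref{tc1} to this particular $\delta$ shows that the $C^{\infty}$-closure of $\mcg(\delta)$ already contains the entire pseudogroup of $\Pi$-Hamiltonian symplectomorphisms between domains in $\Pi$; a fortiori the $C^{\infty}$-closure of $\mcp_{\alpha,k}$ contains it as well, which is exactly the assertion of the corollary.

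The proof requires no idea beyond Theorem \ref{tc1}: the corollary is essentially a reinterpretation of that theorem, once one recognizes Perline's compositional ratios as compositions of reflections off nearby hypersurfaces. The only nonroutine point is the uniform continuity estimate of the second paragraph, guaranteeing that a single threshold $\delta(h)$ works for the whole interval $[0,\delta(h)]$; this is a direct $C^k$-estimate on the explicit parametrization of $\gamma_{\var,h}$ and presents no genuine difficulty.
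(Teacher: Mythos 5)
Your proposal is correct and follows essentially the same route as the paper: the paper's own proof also fixes $\alpha$ and $k$, chooses $\delta(h)$ so that $\gamma_{\var,h}$ is $(\alpha,k)$-close to $\gamma$ for $\var\in[0,\delta(h))$, and then invokes Theorem \ref{tc1}, leaving implicit exactly the inclusion $\mcg(\delta)\subset\mcp_{\alpha,k}$ that you spell out via the factorization $\Delta\mct_{\var,h}=\mct_{\gamma_{\var,h}}^{-1}\circ\mct_\gamma$. Your write-up merely makes explicit the $O(\var)$ bound on the $C^k$-distance and the monotonicity of $C^{\infty}$-closures, both of which the paper treats as immediate.
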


Theorem \ref{tlidense} will be proved in Subsection 2.5 (for $n=1$) and 3.3 (for $n\geq2$). 
Theorem \ref{tc1} and Corollary \ref{c1}  
will be proved in Section 4.

\subsection{Main results in the local case}

Here we consider the case, when $\gamma$ is a germ of (not necessarily convex) 
$C^{\infty}$-smooth hypersurface in $\rr^{n+1}$ 
at some point $x_0$  and state local versions of the above results. 

Each function $f:\gamma\to\rr$ defines a deformation $\gamma_\var=\gamma_{\var,f}$ of the 
germ $\gamma$ given by (\ref{gaef}). Let $\Delta\mct_\var$, $v_f$ be the same, as in (\ref{detevf}). 
Fix an arbitrary small contractible neighborhood of the point $x_0$ in $\gamma$, and now denote by $\gamma$ the 
latter neighborhood. It is a local hypersurface parametrized by a contractible  domain in $\rr^n$. 
 Fix an arbitrary domain $\Pi=\Pi_\gamma$ in the space of oriented lines in $\rr^{n+1}$ such that each point in $\Pi$  
represents a line $L$ intersecting $\gamma$ transversally at one point $x=x(L)$. Let $w=w(L)\in T_x\gamma$ denote 
the orthogonal projection to $T_x\gamma$ of the directing vector of $L$ at $x$. We identify all the tangent 
spaces $T_x\gamma$ by projecting them orthogonally to an appropriate coordinate subspace  
$\rr^n\subset\rr^{n+1}$. 
For simplicity in the local case under consideration we will choose  $\Pi$ to be 
diffeomorphic to a contractible domain in $\rr^{2n}$ via the correspondence sending $L$ to $(x,w)$.

\begin{theorem} \label{tlidense2} The Lie algebra of vector fields on $\Pi_\gamma$ generated by the fields $v_f$, $f\in C^{\infty}(\gamma)$, see (\ref{detevf}), is $C^{\infty}$-dense in the Lie algebra of Hamiltonian vector fields on 
$\Pi_\gamma$. 
\end{theorem}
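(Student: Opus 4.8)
The plan is to transport the problem from vector fields to their Hamiltonian functions and then to prove a density statement at the level of functions. Since $\Pi=\Pi_\gamma$ is diffeomorphic to a contractible domain in $\rr^{2n}$, its first de Rham cohomology vanishes, so every symplectic vector field on $\Pi$ is automatically Hamiltonian, and the correspondence $H\mapsto X_H$ sending a function to its Hamiltonian field is onto the Hamiltonian (= symplectic) fields with kernel exactly the constants. Under this correspondence the Lie bracket of fields becomes the Poisson bracket of functions, $[X_H,X_G]=X_{\{H,G\}}$. Therefore the Lie algebra generated by the $v_f$ corresponds to the Poisson subalgebra $\mathcal A\subset C^\infty(\Pi)$ generated by Perline's Hamiltonian functions $h_f$, $f\in C^\infty(\gamma)$, and Theorem \ref{tlidense2} becomes equivalent to the assertion that $\mathcal A+\rr$ is $C^\infty$-dense in $C^\infty(\Pi)$.

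First I would write Perline's formula (\ref{hfw}) in the coordinates $(x,w)$ on $\Pi$, with $x$ the foot point on $\gamma$ and $w$ the tangential projection of the directing vector. It exhibits each $h_f$ in the separated form $h_f(x,w)=f(x)\,\Phi(x,w)$, where $\Phi$ is one fixed, explicitly known weight determined by the incidence geometry, while the factor $f$ ranges over all of $C^\infty(\gamma)$, i.e.\ over all smooth functions of $x$ alone. Thus at the outset the family $\{h_f\}$ realizes arbitrary dependence on the base variable $x$ but only the single profile $\Phi$ in the fibre variable $w$; the whole content of the theorem is that iterated Poisson brackets manufacture enough $w$-dependence.

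The core step is the Poisson-bracket computation. Writing out $\{h_f,h_g\}$ in the $(x,w)$-coordinates, the base derivatives fall on the free factors $f,g$ while the fibre derivatives fall on the weight $\Phi$, so each bracket again has the shape (free function of $x$)$\,\times\,$(new explicit weight built from $\Phi$ and its derivatives). Iterating, I would show that $\mathcal A$ contains functions $g(x)\,\Psi_\alpha(x,w)$ for a family of weights $\Psi_\alpha$ rich enough that, together with arbitrary coefficients $g\in C^\infty(\gamma)$, they approximate every monomial in $w$ with smooth coefficients in $x$; a Stone--Weierstrass/Borel-type argument then upgrades this algebraic generation to $C^\infty$-density of $\mathcal A+\rr$ in $C^\infty(\Pi)$. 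This is precisely the local computation carried out for Theorem \ref{tlidense} in Sections 2 and 3 (for $n=1$ and $n\ge2$ respectively); because Perline's formula and the Poisson structure are purely local, the same computation applies verbatim to the germ, the only difference being that here $\Pi$ is contractible, which is what makes symplectic and Hamiltonian fields coincide and removes any cohomological obstruction.

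I expect the main obstacle to be exactly the generation of the fibre dependence: all the initial Hamiltonians share the single weight $\Phi$, so one must verify that the specific weights produced by repeated bracketing with $\Phi$ do not collapse into a proper subspace but in fact span a $C^\infty$-dense set of fibre profiles. The case $n\ge2$ is the more delicate one, since the weight couples the several components of $w$ and one has to use brackets that mix different coordinate directions $x_i$ in order to separate and generate the individual $w_j$-dependences; this is where the argument of Section 3 is needed rather than the one-variable computation of Section 2.
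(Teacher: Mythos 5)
Your proposal is correct and follows essentially the same route as the paper: it reduces the statement, via Perline's Hamiltonians $H_f(x,w)=-2\sqrt{1-\|w\|^2}f(x)$ and the correspondence $[X_H,X_G]=X_{\{H,G\}}$, to $C^{\infty}$-density of the Lie algebra of functions generated under Poisson bracket, which is exactly how the paper deduces Theorem \ref{tlidense2} from Theorems \ref{forh} and \ref{tlidf}. Your sketch of the function-level step (brackets producing arbitrary $x$-coefficients times an expanding family of fibre profiles, then a Weierstrass argument, with $n\ge 2$ the delicate case) matches the paper's actual computation in Sections 2--3, including the identification of the main difficulty.
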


The definitions of pseudogroup $\mcg(\delta)$ and $(\alpha,k)$-close hypersurfaces for local hypersurfaces (germs) 
are the same, as in the previous subsection. 

\begin{theorem} \label{tc2}  The statements of Theorem \ref{tc1} and Corollary \ref{c1} hold in the 
case, when $\gamma$ is a local hypersurface: a $C^{\infty}$-smooth germ of hypersurface in $\rr^{n+1}$.
 \end{theorem}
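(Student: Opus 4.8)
The plan is to reduce the local Theorem \ref{tc2} to the local density statement for Lie algebras (Theorem \ref{tlidense2}) by exactly the same mechanism that (in the global case) derives Theorem \ref{tc1} and Corollary \ref{c1} from Theorem \ref{tlidense}. First I would recall the standard principle linking a dense Lie algebra of Hamiltonian vector fields to the pseudogroup of Hamiltonian symplectomorphisms: if $F:V\to W\subset\Pi_\gamma$ is $\Pi$-Hamiltonian, then by Definition \ref{defmham} there is a smooth isotopy $F_t$ with $F_0=\mathrm{Id}$, $F_1=F$, whose time-derivative is a Hamiltonian vector field $X_t$ on $V_t$. One approximates $F$ by the time-one map of a piecewise-autonomous flow: partition $[0,1]$ into small subintervals, freeze the generator on each to an autonomous Hamiltonian field, and write $F$ as a limit of finite compositions of time-$\tau$ maps of autonomous Hamiltonian fields $Y_1,\dots,Y_m$, with uniform $C^\infty$-control on any compact $K\Subset V$. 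This step is local in nature and does not distinguish the germ case from the global case, so it transfers verbatim.

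Next I would invoke Theorem \ref{tlidense2}: each autonomous Hamiltonian field $Y_j$ can be $C^\infty$-approximated on compacts by elements of the Lie algebra generated by the Perline fields $v_f$, $f\in C^\infty(\gamma)$. The flow of a vector field depends continuously (in the $C^\infty$-topology on compacts, over bounded time) on the field, so the time-$\tau$ map of $Y_j$ is approximated by time-$\tau$ maps of fields lying in that Lie algebra. It then remains to realize the time maps of Lie-algebra elements as limits of compositions of the generating maps $\Delta\mct_{\var,h}$. Here one uses that $v_h=\frac{d}{d\var}\big|_{\var=0}\Delta\mct_{\var,h}$, so $\Delta\mct_{\var,h}$ is a time-$\var$ approximate flow of $v_h$; brackets $[v_{h_1},v_{h_2}]$ and higher iterated brackets are generated by commutators of the maps $\Delta\mct$ via the standard group-commutator / Lie-bracket limit (the Baker–Campbell–Hausdorff / Chorin-type product formula), again with the required $C^\infty$-uniformity on compacts. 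Composing these approximations yields the sequence $F_n$ of finite compositions of maps $\Delta\mct_{\var_j,h_j}$ with $0\le\var_j\le\delta(h_j)$, defined on domains $V_n$ with $V_n\cap V\to V$ and $F_n\to F$ in $C^\infty$.

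The one genuinely new point in the local case, and the step I expect to be the main obstacle, is the bookkeeping of domains of definition. In the global case $\Pi_\gamma$ is an honest phase cylinder and the maps $\Delta\mct_\var$ are defined on all of $\Pi$ for $\var$ small depending on a compact set. In the germ case $\gamma$ is only a small contractible neighborhood of $x_0$ and $\Pi_\gamma$ is a chosen contractible domain in $\rr^{2n}$, so one must check that all the intermediate compositions (flows of $Y_j$, commutators, and the approximating $\Delta\mct$ maps) stay inside $\Pi_\gamma$ on the relevant compacts, and that $\var$ can be taken small enough to respect the constraint $\var\le\delta(h)$ while keeping lines transverse to $\gamma$ at a single intersection point. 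Because $F$ is $\Pi$-Hamiltonian its isotopy can be chosen with $V_t\subset\Pi_\gamma$, and since $\Pi_\gamma$ is taken contractible (so that there is no obstruction of the flux/area type noted in the Remark), one can shrink the time-steps and the deformation parameters uniformly on a slightly larger compact neighborhood of $K$ to keep every orbit inside $\Pi_\gamma$. Once this containment is secured, the argument is identical to the global one, and the corresponding $(\alpha,k)$-closeness corollary follows exactly as in Corollary \ref{c1}, since a composition $\Delta\mct_{\var,h}=\mct_{\gamma_{\var,h}}^{-1}\circ\mct_\gamma$ exhibits each generator as a compositional ratio of reflections from $\gamma$ and from a hypersurface $(\alpha,k)$-close to it for $\var$ small.
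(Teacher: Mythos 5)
Your proposal follows essentially the same route as the paper's proof: the paper establishes Theorem \ref{tc2} together with Theorem \ref{tc1} by (i) showing that $\overline{\mcg(\delta)}$ contains the well-defined flow maps of all Hamiltonian fields with Hamiltonians in the Lie algebra $\gh$, via Euler-type products $(\Delta\mct_{\var,h})^{[t/\var]}$ and group commutators (Proposition \ref{pmcg}), (ii) approximating a $\Pi$-Hamiltonian map by compactly supported Hamiltonian symplectomorphisms and freezing the time-dependent generator into piecewise-autonomous pieces (Propositions \ref{hamsup} and \ref{propham}), and (iii) invoking the local density Theorem \ref{tlidense2} --- exactly your chain of ideas traversed in the opposite order, with your domain-bookkeeping concerns absorbed into the paper's pseudogroup formalism of well-defined flow maps. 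The only detail worth noting is that for the $(\alpha,k)$-closeness statement in the germ case the paper restricts to deformation functions $h$ with compact support, so that some $\var>0$ indeed makes $\gamma_{\var,h}$ $C^k$-close to $\gamma$; your phrase ``for $\var$ small'' silently assumes this, since an unbounded $h$ on a non-compact germ neighborhood would admit no such $\var$.
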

 
 Theorems \ref{tlidense2} will be proved in Section 2.5 (for $n=1$) and 3.3 (for $n\geq2$). 
Theorem \ref{tc2} will be proved in Section 4.

\subsection{Case of  germs of planar curves: density in symplectic vector fields and maps}
Let now $\gamma$ be a $C^{\infty}$-smooth germ of planar curve in $\rr^2$ at a point $O$. Let $\Pi$ be 
the same, as in the previous subsection. 

\begin{theorem} \label{tlidense3} In the case, when $\gamma$ is a germ of planar curve,  
 the Lie algebra generated by the fields $v_f$ is dense in the Lie algebra of symplectic vector fields on $\Pi$. 
\end{theorem}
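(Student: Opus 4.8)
The plan is to deduce Theorem \ref{tlidense3} from Theorem \ref{tlidense2} (in the case $n=1$) by a purely cohomological observation, exploiting the fact that in the planar local setting the ambient phase domain is topologically trivial. Recall that by construction $\Pi=\Pi_\gamma$ is diffeomorphic to a contractible domain in $\rr^2$; in particular it is simply connected, so its first de Rham cohomology vanishes, $H^1_{dR}(\Pi)=0$.

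First I would recall the standard characterization of the two Lie algebras that appear. Let $\omega$ denote the canonical area form on the two-dimensional domain $\Pi$. A vector field $X$ is symplectic exactly when $L_X\omega=0$, which by Cartan's formula together with $d\omega=0$ is equivalent to the $1$-form $\iota_X\omega$ being closed; it is Hamiltonian exactly when $\iota_X\omega=dH$ is exact, i.e. $X=X_H$. The only obstruction to upgrading "closed" to "exact" is the class $[\iota_X\omega]\in H^1_{dR}(\Pi)$.

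The key step is then immediate. Since $H^1_{dR}(\Pi)=0$, every closed $1$-form on $\Pi$ is exact, hence every symplectic vector field on $\Pi$ is in fact Hamiltonian. Consequently the Lie algebra of symplectic vector fields and the Lie algebra of Hamiltonian vector fields on $\Pi$ coincide, not merely as vector spaces but as topological Lie algebras equipped with the same $C^\infty$-topology. (That the Lie algebra generated by the fields $v_f$ lies inside the Hamiltonian algebra is automatic: each $v_f$ is Hamiltonian, and the bracket of Hamiltonian fields is Hamiltonian, $[X_F,X_G]=X_{\{F,G\}}$.)

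Finally I would invoke Theorem \ref{tlidense2} for $n=1$: the Lie algebra generated by the $v_f$ is $C^\infty$-dense in the Lie algebra of Hamiltonian vector fields on $\Pi$. Under the identification of the previous paragraph this is precisely $C^\infty$-density in the Lie algebra of symplectic vector fields, which is the assertion of Theorem \ref{tlidense3}. I do not expect a genuine obstacle at this stage: all of the analytic work is inherited from Theorem \ref{tlidense2} (proved for $n=1$ in Subsection 2.5), while the present passage contributes only the topological fact $H^1_{dR}(\Pi)=0$, which turns the Hamiltonian statement into the symplectic one for free.
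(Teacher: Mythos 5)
Your proposal is correct and follows essentially the same route as the paper: the paper likewise notes that in the local planar case the phase space (there written as $T_{<1}\gamma$, a rectangle in the coordinates $(s,w)$) is contractible, so every symplectic vector field is Hamiltonian, and then invokes Theorem \ref{tlidense2}. Your cohomological phrasing via $H^1_{dR}(\Pi)=0$ and Cartan's formula merely spells out the same topological fact in more detail.
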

\begin{theorem} \label{tds3} If $\gamma$ is a germ of planar curve, then 
 the statements of Theorems \ref{tc1} and Corollary \ref{c1} hold with density in the pseudogroup of 
symplectomorphisms between simply connected domains in $\Pi$.
\end{theorem}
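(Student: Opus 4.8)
The plan is to deduce Theorem~\ref{tds3} from the local map-level result Theorem~\ref{tc2} by showing that, when $\Pi$ is the contractible planar region fixed in Subsection~1.2, the class of $\Pi$-Hamiltonian symplectomorphisms between simply connected subdomains coincides with the class of \emph{all} symplectomorphisms between such subdomains. Granting this coincidence, Theorem~\ref{tc2} (which asserts that the statements of Theorem~\ref{tc1} and Corollary~\ref{c1} hold for germs) immediately yields that the $C^{\infty}$-closure of $\mcg(\delta)$, and of the pseudogroup generated by reflections from $(\alpha,k)$-close curves, contains every symplectomorphism between simply connected domains in $\Pi$, which is exactly the assertion of Theorem~\ref{tds3}. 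The infinitesimal input is Theorem~\ref{tlidense3}; note that since $\Pi$ is contractible one has $H^1(\Pi)=0$, so symplectic and Hamiltonian vector fields on $\Pi$ coincide, and Theorem~\ref{tlidense3} is the planar specialization of Theorem~\ref{tlidense2}.

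First I would record the infinitesimal statement underlying the reduction: on any simply connected planar domain $U$ the symplectic form is an area form, a vector field $X$ is symplectic if and only if the $1$-form $\iota_X\omega$ is closed, and closedness forces exactness because $H^1(U)=0$; hence every symplectic vector field on $U$ is Hamiltonian. This is precisely what makes Definition~\ref{defmham} applicable: in a symplectic isotopy $F_t:V\to V_t$ each generator $\dot F_t\circ F_t^{-1}$ is a symplectic vector field on the simply connected domain $V_t$ and is therefore automatically Hamiltonian.

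The core step is then to show that every symplectomorphism $F:V\to W$ between simply connected domains $V,W\subset\Pi$ is $\Pi$-Hamiltonian, i.e.\ admits a symplectic isotopy $F_t:V\to V_t\subset\Pi$ with $F_0=\mathrm{Id}$, $F_1=F$, and each $V_t$ a topological disk. I would construct this isotopy using the two-dimensionality of $\Pi$. Regarding $F$ and the inclusion $\iota:V\hookrightarrow\Pi$ as orientation- and area-preserving embeddings of the disk $V$, I would first connect $\iota$ to $F$ by a smooth isotopy of embeddings (the space of orientation-preserving embeddings of a disk into a connected oriented surface is connected), arranged so that the image area $\mathrm{Area}(V_t)$ stays constant and equal to $\mathrm{Area}(V)$; this uses the contractibility of $\Pi$, which leaves ample room and guarantees the $V_t$ remain disks inside $\Pi$. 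Since the constant-area condition makes each $\phi_t^{*}\omega$ cohomologous to $\omega$ on $V$, a Moser correction applied fiberwise in $t$ then turns the smooth isotopy into a symplectic one with unchanged endpoints, completing the construction.

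The main obstacle is exactly this last construction: one must produce a symplectic — not merely smooth — path of embeddings that stays inside $\Pi$, keeps every $V_t$ a topological disk, and preserves the endpoint maps $\mathrm{Id}$ and $F$ through the Moser correction. The simple connectivity of the domains and the contractibility of $\Pi$ are what eliminate the signed-area obstruction described in the Remark following Definition~\ref{defmham}: that obstruction is genuine for annular domains in a cylinder, but the boundary of a disk is contractible, so the obstruction vanishes automatically. Verifying that this is the \emph{only} obstruction, and that its vanishing indeed suffices to assemble the symplectic isotopy with constant image area, is the crux of the argument; once it is in place, the reduction to Theorem~\ref{tc2} finishes the proof.
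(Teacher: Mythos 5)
Your high-level reduction is the same as the paper's: deduce Theorem \ref{tds3} from Theorem \ref{tc2} by promoting symplectomorphisms between simply connected domains to ($\Pi$-)Hamiltonian ones. But the step that does all the work --- that every symplectomorphism $F:V\to W$ between simply connected domains in $\Pi$ is $\Pi$-Hamiltonian --- is precisely what you leave unverified (you say yourself that checking it ``is the crux of the argument''), and the sketch you give of it breaks at two concrete points. First, the endpoint claim is false as stated: if $\phi_t$ is a smooth isotopy from the inclusion $\iota$ to $F$ with constant image area and $\psi_t$ are the fiberwise Moser corrections with $(\phi_t\circ\psi_t)^*\omega=\omega$, $\psi_0=\mathrm{Id}$, then the resulting symplectic isotopy ends at $F\circ\psi_1$, where $\psi_1$ is some symplectomorphism of $V$ (the correction flow accumulated over $[0,1]$ does not vanish merely because $\phi_0$ and $\phi_1$ are symplectic). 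To get rid of $\psi_1$ you need connectedness of the group of (compactly supported) symplectomorphisms of a disk --- which is essentially Propositions \ref{propschl} and \ref{symham} of the paper, i.e.\ exactly the machinery your route was meant to bypass. Second, running Moser on $V$ itself requires care that the correcting vector field be tangent to the boundary (choosing primitives representing the relative class, which exists only thanks to the equal-area condition), and this presupposes that $\overline V$ is compact with smooth boundary; the paper avoids the boundary problem altogether by first extending $F$ to a compactly supported diffeomorphism of all of $\Pi$ (Proposition \ref{exsm}) and then applying the \emph{relative} Moser argument to forms that already agree on $V$ and near infinity (Proposition \ref{propschl}).

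The second genuine gap is that Theorem \ref{tds3} concerns \emph{arbitrary} simply connected domains $V$, whose closure in $\Pi$ may be noncompact and whose boundary may be wild; the connectedness of the space of disk embeddings (a statement about closed disks), your constant-area normalization (asserted, not constructed, and vacuous if $V$ has infinite area), and any Moser argument all require compact closure and smooth boundary. The paper supplies the missing reduction: exhaust $V$ by simply connected domains $V_n\Subset V$ with smooth boundaries, extend each restriction $F|_{V_n}$ to a compactly supported symplectomorphism of $\Pi$ (Proposition \ref{propschl}), observe that it is Hamiltonian with compact support because $\Pi$ is a topological disk (Proposition \ref{symham}), approximate it by elements of $\mcg(\delta)$ via Theorems \ref{tc1} and \ref{tc2}, and pass to a diagonal sequence. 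Note that this route never proves your stronger claim that $F:V\to W$ itself is $\Pi$-Hamiltonian for wild $V$; it only needs extendability of the restrictions $F|_{V_n}$, and this weakening is what makes the proof go through. So your plan identifies the right target but is missing the extension-plus-exhaustion mechanism that actually reaches it.
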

Theorem \ref{tlidense3} is proved in Subsection 2.5. Theorem \ref{tds3} is proved in Subsection 4.2.

\subsection{Hamiltonian functions for vector fields $v_f$ and  the Lie algebra generated by them}
 Let us  recall a well-known presentation of the symplectic structure 
on $\Pi$ coming from the identification of the outwards directed unit tangent bundle on 
$\gamma$ with the unit ball bundle $T_{<1}\gamma\subset T\gamma$ \cite{ar2, ar3, mm, melrose1, melrose2, tab95}.
To each oriented line $\la$ intersecting $\gamma$ transversally we put into correspondence 
the pair $(x(\la),u(\la))$, where $x(\la)$ is its last intersection point  with $\gamma$  
in the sense of orientation of the line $\la$, and $u=u(\la)\in T_{x(\la)}\rr^{n+1}$ is the unit vector directing $\la$. 
To a unit vector $u\in T_x\rr^{n+1}$, $x\in\gamma$, we put into correspondence its orthogonal 
projection $w:=\pi_\perp(u)\in T_x\gamma$ to the tangent hyperplane of $\gamma$ at $x$; one has 
$||w||\leq1$. In the case, when $\gamma$ is a strictly convex closed hypersurface, the composition of the 
above correspondences yields a diffeomorphism
\begin{equation}J:\Pi\mapsto T_{<1}\gamma, \ \  \la\mapsto(x(\la),w(\la))\in T_{<1}\gamma:=\{(x,w)\in T\gamma \ | \ ||w||<1\}.\label{idgeo}\end{equation}
In the case, when $\gamma$ is a germ, $J$ is a diffeomorphism of the corresponding domain $\Pi$ in the 
space of oriented lines onto an open subset $J(\Pi)\subset T_{<1}\gamma$. 
The tangent bundle $T\gamma$ consists of pairs $(x,w)$, $x\in\gamma$, $w\in T_x\gamma$, 
and carries the Liouvillian 1-form $\alpha\in T^*(T\gamma)$ defined as follows: for every 
$x\in\gamma$, $w\in T_x\gamma$ and $v\in T_{(x,w)}(T\gamma)$ one has 
\begin{equation}\alpha(v)=<w,\pi_*(v)>, \ \ \ \pi_*=d\pi, \ \ \pi \text{ is the  projection }  T\gamma\to\gamma.
\label{liof}\end{equation}
 The {\it standard symplectic form} on $T\gamma$ is given by the differential 
 $$\omega:=d\alpha.$$
 The above Liouville form $\alpha$ and symplectic form $\omega$ are well-defined on every Riemannian 
 manifold $\gamma$. 
 
 The above diffeomorphism $J$ is known to be a symplectomorphism  \cite{ar2, ar3, mm, melrose1, melrose2, tab95}.
 In what follows we switch from $\Pi$ to $T_{<1}\gamma$; the images of the vector fields $v_f$ under the  
 symplectomorphism $J$ are symplectic vector fields on $T_{<1}\gamma$, which will be also denoted by $v_f$. 
\begin{remark} Consider the  correspondence $\la\mapsto(x(\la),u(\la))$  from the beginning of the subsection 
 between $\Pi$ and a domain in the restriction to $\gamma$ of the unit tangent bundle of the ambient space 
 $\rr^{n+1}$. 
For every function $f$ on $\gamma$ the  vector field $v_f$ on $\Pi$ given by (\ref{detevf}) 
is identified via the latter correspondence with a well-defined vector field on the space of all pairs $(x,u)$, where $x\in\gamma$ and $u\in T_x\rr^{n+1}$ 
is a unit vector transversal to $\gamma$. (See the next theorem and remark.) Therefore, the $J$-pushforward of the field $v_f$  is a well-defined vector field 
on all of $T_{<1}\gamma$, which will be also denoted by $v_f$. 
{\it In what follows, whenever the contrary is not specified, we deal with $v_f$ as with the latter vector field on $T_{<1}\gamma$.} 
 \end{remark}
 \begin{theorem} \label{forh} \cite[p.623]{perline} For every $C^{\infty}$-smooth function $f:\gamma\to\rr$ the corresponding vector field 
 $v_f$ is Hamiltonian with the Hamiltonian function 
 \begin{equation}H_f(x,w):=-2\sqrt{1-||w||^2}f(x).\label{hfw}\end{equation}
 \end{theorem}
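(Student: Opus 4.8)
The plan is to verify the formula directly, by computing the vector field $v_f$ to first order in $\var$ and checking Hamilton's equation $i_{v_f}\omega=-dH_f$ on $T_{<1}\gamma$, where $\omega=d\alpha$ is the standard form of (\ref{liof}). The first thing I would record is the geometric meaning of the proposed Hamiltonian: under the symplectomorphism $J$ the quantity $\sqrt{1-\|w\|^2}$ equals $|\langle u,\vec N\rangle|=\cos\theta$, the cosine of the angle between the oriented line and the normal $\vec N$, i.e. the normal component of the unit directing vector $u$. This is exactly the factor appearing in the elementary optics identity ``displacing a mirror by $\delta$ along its normal changes the optical length of a reflected ray by $2\delta\cos\theta$,'' and it is this identity that the coefficient $-2\sqrt{1-\|w\|^2}f(x)$ encodes; I would keep it as a guide and a final consistency check.

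Next I would compute $v_f$ explicitly. Parametrize a line $L$ near the one under study by its last intersection datum $(x,u)$, $x\in\gamma$, $u$ the unit directing vector, and write the reflection law $u\mapsto u-2\langle u,\vec N(x)\rangle\vec N(x)$. Replacing $\gamma$ by $\gamma_\var$ perturbs this in two ways, both of order $\var$: (i) the incidence point slides, because $\gamma_\var$ is displaced from $\gamma$ by $\var f\,\vec N$, producing a longitudinal shift of order $\var f/\cos\theta$ along the ray together with a tangential footpoint shift on $\gamma$; and (ii) the reflecting hyperplane tilts, since the unit normal of $\gamma_\var$ is $\vec N-\var\nabla_\gamma f+O(\var^2)$ with $\nabla_\gamma f$ the tangential gradient (the shape operator enters through re-evaluating $\vec N$ at the shifted footpoint of (i)). Assembling these and then passing from $\mct_{\gamma_\var}$ to the ratio $\Delta\mct_\var=\mct_{\gamma_\var}^{-1}\circ\mct_\gamma$, I would use that differentiating the inverse at $\var=0$ contributes a factor $-(D\mct_\gamma)^{-1}$ applied to the first-order variation just computed. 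The output is an explicit expression for $v_f$ in the coordinates $(x,w)$.

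With $v_f$ in hand the verification is mechanical: I would compute $dH_f$ from (\ref{hfw}) and $i_{v_f}\omega$ from $\omega=d\alpha$, $\alpha=\langle w,\pi_*\cdot\rangle$, and match them. As an independent, more conceptual check I would exploit the exactness of the reflections. Reflections of oriented lines are exact symplectomorphisms whose generating function is the optical path length, so $(\Delta\mct_\var)^*\alpha-\alpha=dS_\var$ with $S_0$ constant, and Cartan's formula gives
\[
\mathcal L_{v_f}\alpha=i_{v_f}\omega+d\big(i_{v_f}\alpha\big)=d\dot S_0 ,
\]
whence $v_f$ is automatically Hamiltonian with $H_f=\dot S_0-\alpha(v_f)$ up to sign. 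Here $\dot S_0=\frac{d}{d\var}\big|_{\var=0}S_\var$ is precisely the derivative of the optical path length, equal to $2f\cos\theta=2\sqrt{1-\|w\|^2}f$ by the mirror-displacement identity above; checking that the residual term $-\alpha(v_f)$ only fixes the overall sign and does not alter the coefficient reproduces (\ref{hfw}).

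The main obstacle is step (ii) together with the inverse: carrying out the first-order differential geometry of reflection from the deformed surface—tracking simultaneously the moving incidence point and the tilting tangent hyperplane, then applying $-(D\mct_\gamma)^{-1}$—with enough care on orientation and sign conventions to pin down the precise constant $-2$ rather than merely its order of magnitude. A secondary technical point is that differentiating the vectors $u$ and $\vec N$ along $\gamma$ requires the ambient Levi-Civita connection, so I would verify that the resulting expressions for $i_{v_f}\alpha$ and $i_{v_f}\omega$ are frame-independent. Once the constant is fixed, either the direct match or the generating-function argument closes the proof.
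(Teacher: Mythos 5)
You should know at the outset that the paper contains no proof of this statement to compare against: Theorem \ref{forh} is quoted from \cite[p.23]{perline}, and the only argument supplied is the chart-translation remark that follows it, namely that Perline's Hamiltonian $-2\langle u,\vec N(x)\rangle f(x)$ equals $-2\sqrt{1-||w||^2}f(x)$ because $\langle u,\vec N\rangle=\sqrt{1-||w||^2}$ --- which is exactly your opening observation. So your proposal must stand as a self-contained proof. Your first route (compute $v_f$ by first-order perturbation of the reflection law, then verify Hamilton's equation) is a legitimate strategy, but as written it is only a plan: every step that would constitute the proof --- the first-order geometry at the deformed mirror, the application of $-(D\mct_\gamma)^{-1}$, and the matching against $dH_f$ that produces the constant $-2$ --- is deferred, as you acknowledge.

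The second route, the one you actually rely on to pin down the formula, contains a genuine error. Your general identity is fine: if $(\Delta\mct_\var)^*\alpha-\alpha=dS_\var$, then Cartan's formula gives $i_{v_f}\omega=d\bigl(\dot S_0-\alpha(v_f)\bigr)$, so the Hamiltonian is $\dot S_0-\alpha(v_f)$ up to the sign convention. But your two evaluations of the ingredients are both false, as a consistency check in the planar case shows. There $\alpha=w\,ds$, $\omega=dw\wedge ds$, and the theorem asserts $H_f=-2f(s)\sqrt{1-w^2}$; the $\partial_s$-component of the Hamiltonian field of $H_f$ is $\mp\partial_wH_f=\mp 2wf/\sqrt{1-w^2}$, hence $\alpha(v_f)=\mp 2w^2f/\sqrt{1-w^2}$ --- a term of the same size as $H_f$, not a sign adjustment --- and consequently $\dot S_0=\mp 2f/\sqrt{1-w^2}$. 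So the derivative of the primitive $S_\var$ you defined is $2f/\cos\theta$, not $2f\cos\theta$ (the two agree only at normal incidence $w=0$); what enters $S_\var$ is the sliding of the lines' intersection points along the rays, with factor $1/\cos\theta$, rather than the normal-projection factor $\cos\theta$. The mirror-displacement identity $2\delta\cos\theta$ computes the $\var$-derivative of a different object: a generating function of $\Delta\mct_\var$ in mixed (endpoint-type) variables, i.e.\ the Hamilton--Jacobi formalism, in which that derivative equals the Hamiltonian outright, with no $\alpha(v_f)$ correction. You have conflated the two formalisms: in the one you set up, neither $\dot S_0$ nor $\alpha(v_f)$ is what you claim, and only their difference reproduces $-2\sqrt{1-||w||^2}f$. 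To repair the argument, either carry out route 1 in full, or rerun route 2 with a mixed-variables generating function $W_\var$ of $\Delta\mct_\var$ normalized so that $W_0$ generates the identity; there Hamilton--Jacobi gives $H_f=-\dot W_0$, and the mirror-displacement identity legitimately yields $\dot W_0=2f\cos\theta=2f\sqrt{1-||w||^2}$, hence (\ref{hfw}).
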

 \begin{remark} The formula from \cite[p.623]{perline} was given in the chart $(x,u)$, with the 
Hamiltonian function  $H_f(x,u)=-2<u,\vec N(x)>f(x)$. The latter scalar product being equal to $\sqrt{1-||w||^2}$, $w=\pi_\perp(u)$,  this yields (\ref{hfw}).
 \end{remark} 
 \begin{proposition} \label{pcom} 
 The vector space over $\rr$ generated by the Poisson brackets of the functions $H_f(x,w)$ 
 given by (\ref{hfw}) for all $f\in C^{\infty}(\gamma)$ is a Lie algebra (under Poisson bracket), where each element can be represented as a sum of at most $2n+1$ 
 Poisson brackets. It consists  of the functions on $T_{<1}\gamma$ of type $\eta(w)$, 
 where $\eta$ is an arbitrary $C^{\infty}$-smooth 1-form on $\gamma$, and is identified with the space of 1-forms. 
 The Lie algebra structure thus obtained on the 1-forms is isomorphic  to  the  
 Lie algebra of all $C^{\infty}$-smooth vector fields (with Lie bracket) via the duality isomorphism $T^*\gamma\to T\gamma$ 
 given by the metric.
 \end{proposition}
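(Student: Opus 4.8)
The plan is to pass to the canonical coordinates on $T_{<1}\gamma$ supplied by the metric, compute the Poisson bracket $\{H_f,H_g\}$ explicitly, and then read off all the assertions. Fix local coordinates $x=(x^1,\dots,x^n)$ on $\gamma$ with induced fiber coordinates $w=(w^1,\dots,w^n)$, $w=w^i\partial_{x^i}$, and set $p_i:=g_{ij}(x)w^j$. Then the Liouville form $\alpha$ from (\ref{liof}) becomes $\alpha=p_i\,dx^i$ and $\omega=d\alpha=dp_i\wedge dx^i$, so $(x^i,p_i)$ are canonical and the Poisson bracket is the standard one. In these coordinates $\|w\|^2=g^{ij}(x)p_ip_j$ and $H_f=-2\phi\,f(x)$ with $\phi:=\sqrt{1-g^{ij}p_ip_j}$, cf.\ (\ref{hfw}).

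The crux is the computation of $\{H_f,H_g\}$. I would compute $\partial H_f/\partial p_i=2f\,w^i/\phi$ and $\partial H_f/\partial x^i=-2\phi f_{,i}+\phi^{-1}f\,(\partial_{x^i}g^{jk})p_jp_k$, and likewise for $g$. Substituting into the standard bracket, the two \emph{bad} terms carrying the factor $\phi^{-2}$ and the derivatives of the metric are both proportional to $fg$, hence symmetric in $(f,g)$, and therefore cancel, leaving the clean expression
\begin{equation}\{H_f,H_g\}=c\,(f\,dg-g\,df)(w),\qquad c\neq0,\label{brk}\end{equation}
that is, the value on $w$ of the $1$-form $c(f\,dg-g\,df)$ on $\gamma$. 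This cancellation — the fact that the Poisson bracket of two functions each involving the square root $\phi$ is again \emph{linear} in $w$ — is the genuine content of the proposition and the step I expect to be the main obstacle; everything else is formal.

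Granting (\ref{brk}), the span of the brackets is contained in $\{\eta(w)\mid\eta\in\Omega^1(\gamma)\}$. For the reverse inclusion and the bound $2n+1$, note first that $1\cdot dg-g\cdot d1=dg$, so all exact forms occur as single brackets. Next choose functions $y^1,\dots,y^N$ on $\gamma$ whose differentials span $T^*_x\gamma$ at every point: the restrictions of the ambient coordinates of $\rr^{n+1}$ in the convex case ($N=n+1$, since $T^*_x\rr^{n+1}\to T^*_x\gamma$ is onto), or coordinates on the contractible parameter domain in the germ case ($N=n$); in either case $N\le 2n$. Writing $\eta=\sum_a h_a\,dy^a$, the antisymmetrization identity $2\eta=\sum_a(h_a\,dy^a-y^a\,dh_a)+d\big(\sum_a h_a y^a\big)$ exhibits $\eta$ as a sum of $N+1\le 2n+1$ terms of the form $(f\,dg-g\,df)$, hence, after absorbing $c$ and the factor $\tfrac12$ into the arguments, as a sum of at most $2n+1$ Poisson brackets. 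This proves simultaneously that the span equals $\{\eta(w)\}$ and the claimed bound.

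Finally, to identify this space as a Lie algebra isomorphic to vector fields, observe that under $p_i=g_{ij}w^j$ the function $\eta(w)=\eta_i w^i=g^{ij}\eta_i p_j$ is exactly the momentum function $P_{\eta^\sharp}$ of the metric-dual field $\eta^\sharp=g^{ij}\eta_j\partial_{x^i}$, linear in the $p_i$. For momentum functions one has the standard identity $\{P_X,P_Y\}=\pm P_{[X,Y]}$, so
\begin{equation}\{\eta(w),\zeta(w)\}=\pm\big([\eta^\sharp,\zeta^\sharp]^\flat\big)(w).\label{iso}\end{equation}
This shows $\{\eta(w)\}$ is closed under Poisson bracket, hence a Lie algebra, and that $\eta\mapsto\eta^\sharp$ intertwines this bracket (up to the fixed sign $\pm$, which is normalized away) with the Lie bracket of vector fields, giving the asserted isomorphism $T^*\gamma\xrightarrow{\sim}T\gamma$. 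I would close by verifying (\ref{iso}) in the canonical coordinates, the only remaining routine check.
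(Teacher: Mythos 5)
Your proof is correct, and it reaches the same three milestones as the paper's own argument --- the bracket formula $\{H_f,H_g\}=c\,(f\,dg-g\,df)(w)$, an antisymmetrization identity realizing an arbitrary $1$-form as a short sum of such brackets, and the identification of the resulting Lie algebra with vector fields --- but by genuinely different technical means at each step. For the bracket formula, the paper works in \emph{normal} coordinates centered at a point $x_0$ and computes only at the points of the fiber $T_{x_0}\gamma$, where the $z$-derivatives of $\sqrt{1-||w||^2}$ vanish because the metric agrees with the Euclidean one to first order (Proposition \ref{prosym}); you instead pass to the global Darboux coordinates $(x^i,p_i)$, $p_i=g_{ij}w^j$, and check that the two terms carrying $\partial_{x^i}g^{jk}$ are symmetric in $(f,g)$ and cancel --- an explicit computation, valid on a whole chart at once, which is sound (one gets $\{H_f,H_g\}=4(g\,df-f\,dg)(w)$, matching the paper's formula (\ref{poissfo}) up to the factor $(-2)^2$). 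For the decomposition, the paper uses a Whitney embedding $\gamma\hookrightarrow\rr^{2n}$ plus a tubular-neighborhood and bump-function argument (Proposition \ref{claim2}) to write $\eta=\sum_{\ell=1}^{2n}g_\ell\,df_\ell$, whence the bound $2n+1$; you exploit the hypersurface structure directly, taking the $n+1$ restricted ambient coordinates (or the $n$ germ coordinates), which yields the sharper bound $n+2$ (resp.\ $n+1$) --- at the price of using the embedding of $\gamma$ into $\rr^{n+1}$, whereas the paper's route proves the statement for an \emph{abstract} Riemannian manifold, which is what the paper actually needs later (Theorem \ref{tlidf} and Section 5 concern $\gamma$ not necessarily embedded). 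Note also that your step ``writing $\eta=\sum_a h_a\,dy^a$'' does require a justification in the convex case, since the $dy^a$ only span $T_x^*\gamma$ and are not a frame; it follows from the standard generating-sections fact that the paper isolates as Proposition \ref{teogen}, or from a tubular neighborhood in $\rr^{n+1}$ exactly as in Proposition \ref{claim2}, so this is a routine, not a genuine, gap. Finally, for the Lie-algebra structure you invoke the momentum-function identity $\{P_X,P_Y\}=P_{[X,Y]}$; with the paper's sign convention (\ref{poissn}) the sign is indeed $+$, so your $\pm$ hedge is unnecessary, and this argument is cleaner and more self-contained than the paper's (fiberwise linearity of the bracket via normal coordinates, plus a terse appeal to (\ref{poissfo})).
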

 Proposition \ref{pcom} is proved in Subsection 3.1.
 
 \begin{theorem} \label{tlidf} Let $\gamma$ be an arbitrary Riemannian manifold (neither necessarily embedded, 
 nor necessarily compact). Let $\gh$ denote the Lie algebra generated (under the Poisson bracket) by the
  Hamiltonian functions (\ref{hfw}) on $T_{<1}\gamma$ constructed from all  $f\in C^{\infty}(\gamma)$. The Lie algebra $\gh$ 
 is $C^{\infty}$-dense in the Lie algebra of all the $C^{\infty}$-functions on 
 the unit ball bundle $T_{<1}\gamma$: dense in the topology of uniform  convergence with all derivatives on compact subsets. 
\end{theorem}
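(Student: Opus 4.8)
The plan is to prove density by showing that $\gh$ contains, for every $k\ge0$, all functions of the form $\sqrt{1-\|w\|^2}\,M_k(x,w)$, where $M_k$ is in each fiber a homogeneous polynomial of degree $k$ in the rescaled variable $u:=w/\sqrt{1-\|w\|^2}$ with arbitrary $C^{\infty}$ coefficients in $x$, and then to deduce $C^{\infty}$-density from the facts that $u\mapsto w$ is a fiberwise diffeomorphism of $T_x\gamma$ onto the open unit ball and that $\rho:=\sqrt{1-\|w\|^2}$ is a smooth positive factor. The key preliminary observation, obtained by a direct computation of the Poisson bracket in the chart $(x,w)$, is that every generator $H_f=-2\rho f$ lies in the space $\rho\cdot\mathbb R[u]$ of $\rho$ times a fiberwise polynomial in $u$, that by Proposition \ref{pcom} every single bracket $\{H_f,H_g\}$, being linear in $w=\rho u$, equals $\rho$ times a linear form in $u$ and hence also lies there, and that $\rho\cdot\mathbb R[u]$ is preserved by the Poisson bracket. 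I grade this space by the fiber degree $k$ in $u$: the generators occupy degree $0$ and span all $\rho\,c(x)$, while Proposition \ref{pcom} supplies the entire degree-$1$ layer, i.e. all $\rho\,\eta(u)=\eta(w)$ with $\eta$ an arbitrary $1$-form.

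Next I would run an induction on $k$, the engine being the bracket of a degree-$0$ generator with a degree-$k$ element. A computation shows that $\{H_f,\rho M_k\}$ has fiber degree $k+1$, its top-degree part being a bilinear expression $B(f,M_k)$ consisting of $f$ times a first-order $x$-derivative of the coefficients of $M_k$ together with $df$ paired against those coefficients, again in $\rho\cdot\mathbb R[u]$; the remaining terms have degree $\le k-1$ and therefore already lie in $\gh$ by the induction hypothesis, so they can be subtracted. It then suffices to show that, as $f$ ranges over $C^{\infty}(\gamma)$ and $M_k$ over the already-constructed degree-$k$ layer, the $\mathbb R$-linear span of the top parts $B(f,M_k)$ exhausts all homogeneous degree-$(k+1)$ fiber polynomials with arbitrary $C^{\infty}$ coefficients. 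I would verify this from the elementary freedom available: products $f\cdot\partial_j\psi$ already span all of $C^{\infty}$, since one may take $\psi$ a coordinate function extended globally so that $\partial_j\psi\equiv1$ on the support of $f$; thus an arbitrary target coefficient supported in a chart is realized as such a bracket, and summing over a finite chart cover handles any compact subset. This is exactly the step where the full strength of having arbitrary $f$, arbitrary lower-degree coefficients, and $\mathbb R$-linear combinations is used.

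Finally I would assemble the conclusion. Having placed every $\rho\,M_k$ inside $\gh$, finite sums give $\rho\,p(x,u)$ for every fiber polynomial $p$, so $\gh$ contains the whole space $\rho\cdot\mathbb R[u]$ with $C^{\infty}(x)$ coefficients; passing to the closure, on a compact $K\Subset T_{<1}\gamma$ the image of $K$ in the $(x,u)$ chart is compact, polynomials in $u$ with $C^{\infty}(x)$ coefficients are $C^{\infty}$-dense there, and multiplication by the fixed smooth positive function $\rho$ is a homeomorphism of $C^{\infty}(K)$, whence $\overline{\gh}\supseteq\rho\cdot C^{\infty}(K)=C^{\infty}(K)$, as asserted. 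For a general (possibly non-compact, non-embedded) Riemannian $\gamma$ the same scheme applies: the metric enters only through the coefficients $g_{ij}(x)$ in $\rho=\sqrt{1-\sum g_{ij}w^iw^j}$ and in $\omega=d(\sum g_{ij}w^i\,dx^j)$, and through Christoffel-type corrections in the bracket, all of which alter $B(f,M_k)$ only by lower-order and coefficient terms, and density remains a compact-subset matter treated chart by chart with bump functions. I expect the main obstacle to be precisely this inductive degree-raising step: writing $B(f,M_k)$ explicitly in the Riemannian setting and checking both that its top-degree part is genuinely nonzero, so that the degree climbs by exactly one, and that its span fills the entire degree-$(k+1)$ layer; the invariance of $\rho\cdot\mathbb R[u]$ under the Poisson bracket, though clean in the $u$-variable, must likewise be confirmed so that the grading is well defined.
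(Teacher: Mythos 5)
Your overall skeleton (grading by the fiber degree in $y=w/\sqrt{1-||w||^2}$, getting the full degree-$1$ layer from Proposition \ref{pcom}, raising the degree by bracketing with the degree-zero generators, then concluding by a Weierstrass argument) is the same as the paper's. But the inductive degree-raising step, which you yourself flag as the main obstacle, genuinely fails at $k=2$, and that failure point is exactly where almost all of the paper's work lies. By formula (\ref{poishf}), the degree-$(k+1)$ part of $\{H_{k,\phi},H_{0,f}\}$ equals $\bigl((k-2)\phi(y)\langle\nabla f,y\rangle+(\nabla_y(f\phi))(y)\bigr)\bigl(1+||y||^2\bigr)^{-1/2}$ (your $u$ is the paper's $y$). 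For $k\neq2$ the nonzero factor $k-2$ lets one isolate the products $\phi(y)\langle\nabla f,y\rangle$, which do span $S^{k+1}(\ttg)$ (Proposition \ref{pronaf}), and the induction goes through. At $k=2$, however, the two coupled terms merge by the Leibniz rule into the single symmetrized covariant derivative $(\nabla_y(f\phi))(y)$, so the span of all top parts is only $V_2(S^2(\ttg))$ (Proposition \ref{pikp1}), which in general is a proper subspace of $S^3(\ttg)$ (e.g.\ on a flat torus, or on a circle, all coefficients of elements of the image have zero average). Your spanning argument --- choosing the coefficients of $M_k$ to be coordinate functions so that $f\cdot\partial_j\psi$ realizes an arbitrary target function --- implicitly treats ``$f$ times derivatives of the coefficients'' and ``$df$ paired with the coefficients'' as independently prescribable; they are not, since both come from the same bracket, and at $k=2$ their sum is always a total derivative.

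This is not a repairable detail of your induction, because your intermediate claim is false in general. For $n=1$ and $\gamma$ a circle, the top part of $\{H_{0,f},H_{2,g}\}$ is $H_{3,-(fg)'}$, and $(fg)'$ has zero mean, so the degree-$3$ layer you can reach is only $\La_{3,0}$; indeed, by Theorem \ref{tlidf3} the Lie algebra $\gh$ equals the proper subspace $\ggg\subsetneq\oplus_k\La_k$, so no argument can place the whole of $\rho\cdot\rr[u]$ inside $\gh$, and density must be salvaged by a separate approximation argument showing that this proper subalgebra is still $C^\infty$-dense (Lemma \ref{lemden}). For $n\geq2$ the paper repairs the $k=2$ breakdown not by improving the upward step but by descending from degree $4$: Lemma \ref{pk=4} constructs combinations $\sum_i\phi_i\otimes f_i\in S^4(\ttg)\otimes_\rr C^{\infty}(\gamma)$ that kill the degree-$5$ parts of the brackets (kernel of $G^+_4$) while their degree-$3$ parts realize an arbitrary element of $S^3(\ttg)$ (surjectivity of $G^-_4$ on that kernel), yielding $\La_3\subset\{\La_4,\La_0\}$ (Proposition \ref{la40}); proving this requires the kernel basis of Proposition \ref{ppbasis}, the $\oo(N)$-equivariance, and the zero-average characterization of Lemma \ref{ppeucl}. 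Your proposal contains no substitute for either of these mechanisms, so as written the gap at $k=2$ is fatal.
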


 Below we formulate more precise versions of Theorem \ref{tlidf}, which give 
 explicitly the Lie algebra generated by functions (\ref{hfw}) in different cases. 
To obtain its simpler description,  we deal with the following renormalization isomorphism
 \begin{equation} Y:T_{<1}\gamma\to T\gamma, \ (x,w)\mapsto(x,y),\  
 y=y(w):=\frac w{\sqrt{1-||w||^2}}\in T_x\gamma.\label{yw}
 \end{equation}
 This equips $T\gamma$ with the pushforward symplectic structure $Y_*\omega$. 
 \begin{convention}\label{conve}
 Sometimes we rewrite functions in $(x,w)\in T_{<1}\gamma$ as functions of $(x,y)\in T\gamma$. 
 By definition, the Poisson bracket of two functions in $(x,y)$ is calculated with respect to the pushforward 
 symplectic form $Y_*\omega$. It coincides with the pushforward by $Y$ of their Poisson bracket 
 as functions of $(x,w)$, with respect to the canonical symplectic form $\omega$ on $T_{<1}\gamma$. 
 \end{convention} 
 \begin{example}  For every function $f$ on $\gamma$ the corresponding Hamiltonian function $H_f(x,w)$ 
 of the vector field $v_f$ written in the new coordinate $y$ is equal to 
 \begin{equation} H_f(x,w)=-2H_{0,f}(x,y), \ \ \ H_{0,f}(x,y):=\frac{f(x)}{\sqrt{1+||y||^2}}.\label{hfxy}\end{equation}
 This follows from (\ref{hfw}) and (\ref{yw}). 
 \end{example}
  Let $S^k(T^*\gamma)$ denote the space of those $C^{\infty}$-smooth functions on $T\gamma$ whose 
 restrictions to the fibers $T_x\gamma$ are homogeneous polynomials of degree $k$. 
 For every $\phi\in S^k(T^*\gamma)$ we write $\phi=\phi(y)$  as a polynomial in $y\in T_x\gamma$ 
 with coefficients depending $C^{\infty}$-smoothly on $x\in\gamma$. Set 
 \begin{equation}H_{k,\phi}:=\frac{\phi(y)}{\sqrt{1+||y||^2}}, \ \ H_{k,\phi}\in C^{\infty}(T\gamma),\label{hkhf}\end{equation}
 $$\La_k:=\{ H_{k,\phi} \ | \ \phi\in S^k(T^*\gamma)\}.$$
 \begin{example} \label{excurves} Let $n=1$ and let, for simplicity, $\gamma$ be connected. Then $\gamma$ is either 
 an interval, or a circle, equipped with a Riemannian metric. Let $s$ be the length element on $\gamma$. 
 Then $y$ is just one variable, each element in $S^k(\ttg)$ is a product $\phi=h(s)y^k$, $h\in C^{\infty}(\gamma)$. 
 In this case, when $n=1$, we will use  simplified notations replacing $\phi$ by $h$ and writing  $H_{k,\phi}$ as 
 $H_{k,h}$: 
 \begin{equation}H_{k,\phi}=H_{k,h}:=\frac{h(s)y^k}{\sqrt{1+y^2}}; \ \ \ \ \La_k=\{ H_{k,h} \ | \ h\in C^{\infty}(\gamma)\}.
 \label{hdh}\end{equation} 
 In the case, when $\gamma$ is a closed planar curve with induced metric, the phase cylinder $\Pi$ is indeed 
 a cylinder: it is diffeomorphic to the product of a circle and an interval via the following correspondence. To 
 each oriented line $L$ intersecting $\gamma$ transversally we put into correspondence its last   
 intersection point with $\gamma$ (identified with its natural parameter $s$), running all of $\gamma\simeq S^1$,
   and the intersection angle $\theta\in(0,\pi)$. 
  The above-defined symplectic form on $\Pi$ is equal to $\sin\theta ds\wedge d\theta$ (see \cite[lemma 3.7]{tab}). The 
  natural parameter yields a canonical trivialization of the tangent bundle to $\gamma$. After this trivialization 
  the corresponding vectors $w$ and $y$ become just real numbers that are equal to 
  \begin{equation} w=\cos\theta, \ \ \ \ \ y=\cot\theta.\label{tant}\end{equation}
 \end{example}
 
 In the renormalized coordinates $y$ the Lie algebra $\gh$ admits the following  description. 
 
 \medskip
 
\begin{theorem} \label{tlidf1} Let $\gamma$ be the same, as in Theorem \ref{tlidf}. 
If $n=\dim\gamma\geq2$, then one has
\begin{equation}\gh=\oplus_{k\geq0}\La_k.\label{oplk}\end{equation}
\end{theorem}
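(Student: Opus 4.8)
The plan is to prove the two inclusions $\gh\subseteq\oplus_k\La_k$ and $\oplus_k\La_k\subseteq\gh$ separately: the first by a structural bracket computation, the second by an induction on fiber-degree whose only genuine difficulty is the production of $\La_3$.

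For $\gh\subseteq\oplus_k\La_k$ it suffices to show that $\oplus_k\La_k$ is closed under the Poisson bracket, since it contains the generators $\La_0=\{H_{0,f}\}$. I would compute $\{H_{j,\phi},H_{k,\psi}\}$ directly in the chart $(x,w)$ on $T_{<1}\gamma$, where $J$ is a symplectomorphism for the canonical form and an element of $\La_m$ reads $\sigma^{1-m}\phi(w)$ with $\sigma=\sqrt{1-\|w\|^2}$, $\phi\in S^m(T^*\gamma)$. A one-line differentiation in canonical coordinates gives $\{H_{j,\phi},H_{k,\psi}\}=\sigma^{-j-k}A+\sigma^{2-j-k}B$, where $A\in S^{j+k+1}(T^*\gamma)$ is a leading symbol built from the horizontal operator $D:=\sum_i w_i\partial_{x_i}$, and $B=\{\phi,\psi\}_{\mathrm{can}}\in S^{j+k-1}(T^*\gamma)$ is the ordinary Poisson (symbol) bracket of the fiber-polynomials. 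Since $\sigma^{-j-k}=\sigma^{1-(j+k+1)}$ and $\sigma^{2-j-k}=\sigma^{1-(j+k-1)}$, this says
\[
\{\La_j,\La_k\}\subseteq\La_{j+k+1}\oplus\La_{j+k-1}.
\]
On a curved $\gamma$ I would evaluate the bracket at an arbitrary point in geodesic normal coordinates centred there: the correction terms carry a factor $\partial g$ and vanish at the centre, so the bracket equals the flat expression pointwise; as $\La_m$ is intrinsically defined, the inclusion holds globally and $\oplus_k\La_k$ is a Lie subalgebra containing $\gh$.

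For the reverse inclusion I argue by induction on $m$ that $\La_m\subseteq\gh$. The base cases are $\La_0$ (the generators) and $\La_1$, which by Proposition \ref{pcom} is already the full space of $1$-forms. The inductive engine is the leading symbol: once $\La_0,\dots,\La_k\subseteq\gh$, each bracket $\{\La_a,\La_b\}$ with $a+b=k$ has sub-symbol in $\La_{k-1}\subseteq\gh$, so its leading symbol $A_{a,b}(\phi,\psi)=-(1-a)\phi\,D\psi+(1-b)(D\phi)\,\psi$ lies in $\gh$. Taking $a=0$ with $f\equiv\mathrm{const}$ shows $D(S^k(T^*\gamma))\subseteq\gh$; reducing modulo $D(S^k(T^*\gamma))$ and using $(D\phi)\psi\equiv-\phi\,D\psi$ yields $A_{a,b}\equiv(k-2)\,\phi\,D\psi$. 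For $k\neq2$ the factor is nonzero, and taking $b=0$, $\psi=g$ (so $D\psi=\langle dg,w\rangle$) makes $\phi\,D\psi$ run pointwise over $S^k\cdot S^1=S^{k+1}$; a partition-of-unity argument over the $C^\infty(\gamma)$-module structure then gives $\La_{k+1}\subseteq\gh$. This settles every degree except the step $k=2$.

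The hard part is exactly this degenerate step, i.e. producing $\La_3$: the factor $k-2$ vanishes, so the top of $\{\La_0,\La_2\}$ contributes only $D(S^2(T^*\gamma))\subsetneq S^3(T^*\gamma)$ while that of $\{\La_1,\La_1\}$ vanishes. Nor can $\La_3$ be reached by an isolated sub-symbol: a sub-symbol in $\La_3$ comes from $\{\La_a,\La_b\}$ with $a+b=4$, whose leading symbol lies in $\La_5$, and this coupling propagates up the whole odd tower $\La_3,\La_5,\La_7,\dots$, so a monotone induction stalls. My plan is to treat the odd part as a coupled linear system. I collect all members of $\gh$ inside $S^1\oplus S^3\oplus S^5$ (fiber-degrees): these are $S^1$, the subspace $D(S^2(T^*\gamma))$, and the full two-term brackets $A+B$ coming from $\{\La_0,\La_4\}$ and $\{\La_2,\La_2\}$, with $A\in S^5$ and $B\in S^3$ equal respectively to a contraction $-\langle df,\partial_w\chi\rangle$ and to the canonical symbol bracket $\{S^2,S^2\}_{\mathrm{can}}$. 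Projecting to the top component $S^5$ and computing the kernel of this projection, I would exhibit combinations whose $S^5$-parts cancel and whose surviving $S^3$-parts, together with $D(S^2(T^*\gamma))$, span all of $S^3$. This relation computation is where the hypothesis $n\geq2$ enters essentially: the surjectivities it needs (such as $S^2\cdot S^2=S^4$ and the non-degeneracy of $\{S^2,S^2\}_{\mathrm{can}}$ modulo $D(S^2(T^*\gamma))$) fail for $n=1$, which is precisely why that case is treated separately. Once $\La_3\subseteq\gh$, the leading-symbol induction resumes without further degeneracy and gives $\La_m\subseteq\gh$ for all $m$, hence $\gh=\oplus_k\La_k$. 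I expect this coupled low-degree extraction of $\La_3$ to be the main obstacle.
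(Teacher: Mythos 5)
Your overall architecture matches the paper's: the inclusion $\gh\subseteq\oplus_k\La_k$ via the two-component bracket formula in normal coordinates, the leading-symbol induction that produces $\La_{k+1}$ from $\{\La_k,\La_0\}$ whenever $k\neq 2$ (this is the paper's Proposition \ref{pikp1}, resting on the pointwise spanning statement of Proposition \ref{pronaf} plus a partition-of-unity argument), and the observation that everything reduces to producing $\La_3$ (the paper's Corollary \ref{coroif}). You have also correctly identified both the crux and the right kind of strategy for it: form combinations of brackets whose degree-$5$ components cancel and whose degree-$3$ components span.

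However, at exactly that crux your proposal stops being a proof: ``I would exhibit combinations whose $S^5$-parts cancel and whose surviving $S^3$-parts, together with $D(S^2(T^*\gamma))$, span all of $S^3$'' is an announcement, not an argument, and it is precisely the mathematical content of the theorem. In the paper this is Lemma \ref{pk=4} (applied with $k=4$) together with Proposition \ref{la40}: one must (i) describe the kernel of the leading-symbol map $G^+_4$ -- the paper does this over constant-coefficient polynomials by exhibiting the explicit basis $Q_{m,i,j}$ of Proposition \ref{ppbasis}; (ii) prove that the sub-symbol map $G^-_4$ restricted to this kernel is onto, which the paper does in the Euclidean setting (Lemma \ref{ppeucl}) by an $\so(2)$-equivariance argument, a characterization of the image as the zero-sphere-average polynomials, and an induction on the number of variables; and (iii) globalize to a curved $\gamma$ via a Whitney embedding and a generating-sections/partition-of-unity argument (Propositions \ref{progen}, \ref{teogen}), plus the separate cancellation of the residual $V_4$-term by a bracket with a constant. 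None of this, nor any substitute for it, appears in your proposal, so the proof is incomplete. Moreover, your stated reasons for where $n\geq2$ enters are off: $S^2\cdot S^2=S^4$ holds pointwise in every dimension including $n=1$, and for $n=1$ on an interval the conclusion (\ref{oplk}) itself still holds (Theorem \ref{tlidf2}). The genuine $n=1$ degeneracy is different: by (\ref{poissform}), in any combination of brackets $\{H_{0,f_i},H_{4,g_i}\}$ (or $\{H_{2,f_i},H_{2,g_i}\}$) the vanishing of the $\La_5$-component forces the $\La_3$-component to be a total derivative $2(\sum_i f_ig_i)'$, which is everything on an interval but only the zero-average part $\La_{3,0}$ on a circle -- whence Theorem \ref{tlidf3}. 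For $n\geq 2$ the two contractions $G^+_4$ and $G^-_4$ are no longer tied by integration by parts in this way, and proving that their independence yields surjectivity is exactly the work your proposal leaves undone.
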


\begin{theorem} \label{tlidf2} Statement (\ref{oplk}) remains valid in the case, when $n=1$ and $\gamma$ is an interval 
equipped with a Riemannian metric.
\end{theorem}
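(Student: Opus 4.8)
The plan is to work in the renormalized fibre coordinate $y$ over the interval $\gamma$ parametrized by its arc length $s$, and to identify $\gh$ with the graded Lie algebra $\oplus_{k\geq0}\La_k$ by a grading argument combined with an induction on $k$. In these coordinates the pushforward symplectic form is $Y_*\omega=(1+y^2)^{-3/2}\,dy\wedge ds$ (this is (\ref{tant}) rewritten: $\omega=\sin\theta\,ds\wedge d\theta=dw\wedge ds$ and $dw=(1+y^2)^{-3/2}dy$), so the Poisson bracket of Convention \ref{conve} becomes $\{F,G\}=-(1+y^2)^{3/2}\bigl(\partial_yF\,\partial_sG-\partial_sF\,\partial_yG\bigr)$. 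I regard $\oplus_k\La_k$ as graded by the degree in $y$: by (\ref{hdh}) its degree-$k$ piece $\La_k$ consists of the functions $H_{k,h}=h(s)y^k(1+y^2)^{-1/2}$, $h\in C^{\infty}(\gamma)$, and these pieces are manifestly linearly independent, so the sum is direct.

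First I would establish the bracket identity
\[
\{H_{j,a},H_{k,b}\}=H_{j+k-1,\,ka'b-jab'}+H_{j+k+1,\,(1-j)ab'+(k-1)a'b},
\]
with the understanding that the first summand is absent when $j=k=0$ (its coefficient then vanishes). This is a direct computation from the bracket formula above using $\partial_y\bigl(y^k(1+y^2)^{-1/2}\bigr)=ky^{k-1}(1+y^2)^{-1/2}-y^{k+1}(1+y^2)^{-3/2}$. In particular it shows $\{\La_j,\La_k\}\subseteq\La_{j+k-1}\oplus\La_{j+k+1}$, so $\oplus_k\La_k$ is closed under the Poisson bracket and is a Lie algebra. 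Since the generators $H_f=H_{0,f}$ of $\gh$ lie in $\La_0$, this already gives the inclusion $\gh\subseteq\oplus_{k\geq0}\La_k$.

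The reverse inclusion I would prove by induction on $k$, the statement being $\La_k\subseteq\gh$. For $k=0$ it is the definition. Specializing the identity to $j=k=0$ gives $\{H_{0,a},H_{0,b}\}=H_{1,\,ab'-a'b}$, and taking $b\equiv1$ produces $H_{1,-a'}$; since every smooth function on an interval is a derivative, the functions $-a'$ exhaust $C^{\infty}(\gamma)$ and we obtain all of $\La_1$. For the inductive step, assume $\La_0,\dots,\La_k\subseteq\gh$ and specialize to $j=0$: $\{H_{0,a},H_{k,b}\}=H_{k-1,\,ka'b}+H_{k+1,\,ab'+(k-1)a'b}$. The left-hand side lies in $\gh$, and the $\La_{k-1}$-summand $H_{k-1,ka'b}$ lies in $\gh$ by the inductive hypothesis; subtracting it and using directness of the grading, the $\La_{k+1}$-summand $H_{k+1,\,ab'+(k-1)a'b}$ lies in $\gh$. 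Taking $a\equiv1$ yields $H_{k+1,b'}$, and once more the surjectivity of $d/ds$ on $C^{\infty}$ of the interval gives all of $\La_{k+1}$. This closes the induction, and combined with the previous paragraph proves $\gh=\oplus_{k\geq0}\La_k$.

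The essential point, and the only place the hypothesis is used, is the surjectivity of $d/ds:C^{\infty}(\gamma)\to C^{\infty}(\gamma)$, which is exactly what forces the restriction to an interval: it is this that lets the induction close by realizing an arbitrary function in $\La_{k+1}$ as $b'$. On a closed curve the image of $d/ds$ is only the hyperplane of mean-zero functions, and the combinations $ab'+(k-1)a'b$ no longer span all of $C^{\infty}$ for every $k$, so the clean equality (\ref{oplk}) may fail and only the density of Theorem \ref{tlidf} persists; this is why the present statement is phrased for the interval. The remaining work is entirely the verification of the bracket identity, a direct if slightly tedious differentiation, after which everything is formal.
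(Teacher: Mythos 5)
Your proof is correct and is essentially the paper's own argument: the same bracket identity (your formula is the negative of the paper's (\ref{poissform}), because your sign for the bracket in the $(s,y)$-chart is opposite to what the change of variable $w\mapsto y$ applied to (\ref{poisbr1}) actually gives; this is harmless, since a subspace is closed under a bracket iff it is closed under its negative, and spans of brackets coincide), followed by the same two inclusions. The paper packages your induction into Proposition \ref{cdense} --- the inclusion $\{\La_d,\La_k\}\subset\La_{d+k-1}\oplus\La_{d+k+1}$ together with $\pi_{k+1}(\{\La_0,\La_k\})=\La_{k+1}$, proved exactly by your device of setting one of the two functions $\equiv 1$ and using surjectivity of $d/ds$ on $C^{\infty}$ of an interval.
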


It appears, that in the case, when $n=1$ and $\gamma$ is a topological circle, statement (\ref{oplk}) is not true. 
To state its version in this special case, let us introduce the 
following notations. Let $d\in\zz_{\geq0}$, $H_{d,h}$ be functions on the phase cylinder $\Pi$ given 
by formula (\ref{hdh}), and 
let $\La_d$ be their space, see (\ref{hdh}). Let us consider that the length of the curve $\gamma$ is equal to 
$2\pi$, rescaling the metric by constant factor.  Set 
 $$\La_{d,0}:= \{ H_{d,h}\in\La_d \ | \ \int_0^{2\pi}h(s)ds=0\}.$$
 For every odd polynomial vanishing at zero with derivative, 
 \begin{equation} P(y)=\sum_{j=1}^ka_{j}y^{2j+1},\label{podd}\end{equation}
set  
\begin{equation}\wt P(x):=x^{-\frac12}P(x^{\frac12})=\sum_{j=1}^ka_{j}x^j.\label{wtp}
\end{equation}

\begin{theorem} \label{tlidf3}  Let $n=1$ and $\gamma$ be a topological 
 circle equipped with a Riemannian metric. 
  The Lie algebra  $\gh$ generated by the functions $H_{0,f}$, $f\in C^{\infty}(\gamma)$, see (\ref{hfxy}) and (\ref{hdh}), is 
 \begin{equation}\ggg:=\La_1\oplus(\oplus_{d\in 2\zz_{\geq0}}\La_d)\oplus(\oplus_{d\in2\zz_{\geq1}+1}\La_{d,0})\oplus\Psi,\label{algg}\end{equation}
\begin{equation}\Psi:=\{\frac{P(y)}{\sqrt{1+y^2}} \ | \ P(y) \text{ is a polynomial as in 
(\ref{podd}) with } \wt P'(-1)=0\}.\label{dpsi}\end{equation}
\end{theorem}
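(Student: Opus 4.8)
The plan is to work throughout in the renormalized coordinates $(s,y)$ on $T\gamma\cong S^1\times\rr$ with the pushforward form $Y_*\omega=(1+y^2)^{-3/2}\,dy\wedge ds$, whose Poisson bracket is $\{F,G\}=(1+y^2)^{3/2}(F_yG_s-F_sG_y)$, and to reduce everything to the single bracket identity
\begin{equation}
\{H_{a,h},H_{b,g}\}=H_{a+b-1,\,ahg'-bh'g}+H_{a+b+1,\,(a-1)hg'-(b-1)h'g},
\label{eq:starbr}
\end{equation}
obtained by a direct computation. Identity (\ref{eq:starbr}) already shows that $W:=\oplus_{k\ge0}\La_k$ is a Lie subalgebra containing the generators $H_{0,f}$, so $\gh\subseteq W$; it also records that $\{\La_a,\La_b\}\subseteq\La_{a+b-1}\oplus\La_{a+b+1}$, which organizes the rest of the argument by the degree in $y$.

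For the inclusion $\gh\subseteq\ggg$ I would prove that $\ggg$ is a Lie subalgebra of $W$. The even graded pieces, the piece $\La_1$, and the mean-zero odd pieces $\La_{d,0}$ carry no constraint, so the only thing to check is that the constant-in-$s$ odd part of a bracket of two elements of $\ggg$ lies in $\Psi$, i.e.\ satisfies $\wt P'(-1)=0$. Two observations trivialize the bookkeeping: constant-in-$s$ functions Poisson-commute (the numerator in (\ref{eq:starbr}) vanishes when $h,g$ are constant), and a bracket of a constant-in-$s$ function with a mean-zero one has purely mean-zero coefficients (each coefficient in (\ref{eq:starbr}) is then an $s$-derivative). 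Hence the constant part of $\{X,Y\}$ depends only on the mean-zero parts $\hat X,\hat Y$, and by bilinearity it suffices to evaluate it on $\hat X=H_{a,\hat h}$, $\hat Y=H_{b,\hat g}$ with $\overline{\hat h}=\overline{\hat g}=0$. Writing $c=\overline{\hat h'\hat g}$ and using $\overline{\hat h\hat g'}=-c$ (integration by parts over $S^1$), (\ref{eq:starbr}) gives, when $a+b=2m$, a constant part proportional to $(m\,y^{2m-1}+(m-1)\,y^{2m+1})/\sqrt{1+y^2}$; the associated $\wt P(x)=m x^{m-1}+(m-1)x^{m}$ satisfies $\wt P'(x)=m(m-1)x^{m-2}(1+x)$, whence $\wt P'(-1)=0$ (when $a+b$ is odd the constant part is even, hence unconstrained). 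Thus $\ggg$ is closed and contains the generators, giving $\gh\subseteq\ggg$.

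For the reverse inclusion $\ggg\subseteq\gh$ I would argue by induction on the degree $d$, producing simultaneously the full even pieces $\La_{2m}$, the piece $\La_1$, the mean-zero odd pieces $\La_{2m+1,0}$, and the basis elements $\xi_m:=(m\,y^{2m-1}+(m-1)y^{2m+1})/\sqrt{1+y^2}$ ($m\ge2$) of $\Psi$. The base steps are $\La_1$, obtained from $\{H_{0,f},H_{0,g}\}=H_{1,f'g-fg'}$ (the Wronskians $f'g-fg'$ spanning all of $C^{\infty}(S^1)$: constants from $f=\cos s$, $g=\sin s$, mean-zero functions from $f\equiv1$), followed by $\La_2$ and $\La_{3,0}$ from brackets of $\La_0$ with $\La_1$ and $\La_2$ (here the degree-$3$ coefficient is the total derivative $-(fg)'$, so it is automatically mean-zero and the defect degenerates into $\La_1$). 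In the inductive step for an odd level $d=2m+1$ ($m\ge2$) I would bracket $H_{0,f}$ with the already-constructed full even piece $\La_{d-1}$; by (\ref{eq:starbr}) the result lies in $\La_{d-2}\oplus\La_d$, and after subtracting the known mean-zero part in $\La_{d-2,0}$ its degree-$(d-2)$ content is purely constant, so the remainder has the shape $H_{d,\chi}+t\,\xi_m$ with $\chi$ mean-zero and $t\in\rr$ proportional to $\overline{f'g}$. Restricting to $f,g$ with $\overline{f'g}=0$ kills $\xi_m$ and, by the (elementary) surjectivity of $(f,g)\mapsto\chi$ onto the mean-zero functions, yields all of $\La_{d,0}$; feeding back a bracket with $\overline{f'g}\neq0$ and subtracting its now-known $\La_{d,0}$ part isolates $\xi_m$. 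The even pieces $\La_{2m}$ are produced in full (constants included) in the analogous way, bracketing $H_{0,f}$ with the known mean-zero part $\La_{2m-1,0}$ of the preceding odd level.

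I expect the main obstacle to be exactly this entanglement at each odd level, which is the mechanism creating the circle-specific defect $\Psi$ --- absent in the interval case of Theorem \ref{tlidf2}, where periodicity and the identity $\overline{hg'}=-\overline{h'g}$ play no role. Both directions of the proof turn on the same computation: the hidden factor $(1+x)$ in $\wt P'(x)$, equivalently the vanishing $\wt P'(-1)=0$, which reflects the behaviour of $P(y)/\sqrt{1+y^2}$ at the singular points $y=\pm i$ of $\sqrt{1+y^2}$. Once this computation and the surjectivity lemmas for the coefficient maps $h\mapsto hg'-kh'g$ are in hand, the induction bookkeeping is routine.
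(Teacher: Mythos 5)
Your proposal is correct, and both directions ultimately rest on the same core computation as the paper's proof: the average-ratio phenomenon in the bracket formula (\ref{poissform}), which forces the constant-in-$s$ part of any bracket to be a multiple of $R_m(y)=my^{2m-1}+(m-1)y^{2m+1}$ from (\ref{rdy}), whose associated $\wt P$ has derivative $m(m-1)x^{m-2}(1+x)$ vanishing at $x=-1$. But your architecture is genuinely different, and in one place substantially simpler. For the inclusion $\gh\subseteq\ggg$ you verify directly that $\ggg$ is a Lie subalgebra, via the two observations that constant-coefficient elements Poisson-commute and that a bracket of a constant-coefficient element with a mean-zero one has mean-zero coefficients; the paper instead first proves $\ggo\subseteq\gh$ (Proposition \ref{pcont}) and then bounds $\gh\cap\mcp$ from above by analysing how elements of $\gh$ decompose into brackets (Claim 1 inside Proposition \ref{prepsum}). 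Your packaging is cleaner and avoids that structural claim. For the reverse inclusion, your induction brackets only against $\La_0$ and exploits the identity $\{H_{0,1},H_{k,g}\}=H_{k+1,-g'}$ --- for $f\equiv1$ the degree-$(k-1)$ term of (\ref{poissform}) vanishes identically --- which instantly yields all mean-zero coefficients at the next level; one further bracket with $\overline{f'g}\neq0$ (e.g.\ $f=\cos s$, $g=\sin s$) then produces the constants at even levels and isolates $\xi_m$ at odd levels. This bypasses entirely the hardest technical step of the paper, namely the Fourier-series argument proving (\ref{2=3}) and the middle case of (\ref{j=2}), i.e.\ that $\pi_{d+k+1}(\{\La_{d,0},\La_{k,0}\})=\La_{d+k+1}$ with \emph{both} factors mean-zero; that lemma is only hard because constants are forbidden as arguments, a restriction your route never needs. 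The one step you leave implicit is that the $\xi_m$, $m\geq2$, actually span $\Psi$: your computation gives $\mathrm{span}(\xi_m)\subseteq\Psi$, but the reverse containment (the paper's Lemma \ref{dergen}) is what lets you conclude $\Psi\subseteq\gh$ from $\xi_m\in\gh$. It is an elementary dimension count --- in each fixed degree the condition $\wt P'(-1)=0$ cuts out a hyperplane of the odd polynomials with $P'(0)=0$, and the linearly independent $R_m$ lying in it span it --- so this is a routine addition rather than a genuine gap, but it should be stated to close the argument.
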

Theorems \ref{tlidf}, \ref{tlidf1}, \ref{tlidf2}, \ref{tlidf3} will be proved in Sections 2 (for $n=1)$ and 3 (for  $n\geq2$).  

\subsection{Case of hypersurfaces in  Riemannian manifolds}

Let $M$ be a complete Riemannian manifold. Let $\gamma\subset M$ be a closed strictly convex hypersurface bounding 
a domain $\Omega\subset M$ homeomorphic to a ball. Let for every geodesic $\Gamma$ lying in a small neighborhood 
$U=U(\overline\Omega)\subset M$ the intersection $\Gamma\cap\Omega$ be either empty, or an interval bounded 
by two points of transversal intersections with $\partial\Omega$. The space of geodesics intersecting $\Omega$ will be called the 
{\it phase cylinder} and denoted by $\Pi$. The billiard ball map $\mct_\gamma$ of reflection from $\gamma$ 
 acts on the space  of oriented geodesics in the same way, as in Subsection 1.1. This action is symplectic 
 with respect to the standard symplectic form on the space of oriented geodesics that is given by 
 symplectic reduction (Melrose construction, see \cite{ar2, ar3, mm, melrose1, melrose2, tab95}). 
 The phase cylinder $\Pi$ is a symplectic manifold symplectomorphic to the unit ball bundle $T_{<1}\gamma$ equipped with the standard symplectic form, as in Subsection 1.4. 
 
 Let us repeat  Perline's thin film billiard construction. 
 For every $C^{\infty}$-smooth function $f:\gamma\to\rr$ consider the   family of 
hypersurfaces $\gamma_\var$ consisting of the points $\gamma_\var(x)$ 
defined as follows. For every $x\in\gamma$  consider the geodesic $\Gamma_N(x)$ through 
the point $x$ that is orthogonal to $\gamma$ and directed out of $\Omega$.  The point $\gamma_\var(x)$ 
is obtained from the point $x$ by shift of (signed) distance $\var f(x)$ along the geodesic $\Gamma_N(x)$; 
one has $\gamma_\var(x)\notin\overline\Omega$, if $f(x)>0$, and $\gamma_\var(x)\in\Omega$, if $f(x)<0$. 

\begin{theorem} \label{forh1} Consider the following compositional ratio and its derivative
$$\Delta\mct_\var=\Delta\mct_{\var,f}:=\mct_{\gamma_\var}^{-1}\circ\mct_{\gamma}, \ \ 
v_f:=\frac{d\Delta\mct_{\var,f}}{d\var}|_{\var=0}.$$
The derivative $v_f$  is a Hamiltonian vector field on the phase cylinder $\Pi\simeq T_{<1}\gamma$ with the 
same Hamiltonian function $H_f(x,w)=-2\sqrt{1-||w||^2}f(x)$, as in (\ref{hfw}).
\end{theorem}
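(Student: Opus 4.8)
The plan is to reduce Theorem \ref{forh1} to the Euclidean computation already recorded in Theorem \ref{forh} by a pointwise localization in geodesic normal coordinates. First I observe that the target data are intrinsic: the symplectomorphism $J:\Pi\to T_{<1}\gamma$ and the standard form $\omega=d\alpha$ depend only on $\gamma$ with its induced metric, and the proposed Hamiltonian $H_f(x,w)=-2\sqrt{1-||w||^2}f(x)$ is likewise intrinsic to $\gamma$ with that metric. Only the field $v_f$ itself carries information about the ambient $M$, through its geodesic flow and reflection law. Since $\Delta\mct_\var=\mct_{\gamma_\var}^{-1}\circ\mct_\gamma$ is a family of symplectomorphisms equal to the identity at $\var=0$, its derivative $v_f$ is automatically a symplectic field, so it suffices to prove the pointwise identity $J_*v_f(\lambda)=X_{H_f}(\lambda)$ for each $\lambda\in\Pi$.

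For this I fix $\lambda$, let $A\in\gamma$ be its reflection point, and work in geodesic normal coordinates of $M$ centered at $A$. There the metric equals the Euclidean one at $A$ with vanishing Christoffel symbols, every geodesic through $A$ (in particular $\lambda$ and its reflection $\mu=\mct_\gamma(\lambda)$) is a straight line through the origin, and the outward normal geodesic to $\gamma$ at $A$ is radial, hence straight; consequently the deformation $\gamma_\var$ agrees at $A$, to first order in $\var$, with the Euclidean normal deformation (\ref{gaef}) applied to the germ $\tilde\gamma$ representing $\gamma$ in the chart. I would then run the Euclidean billiard construction for $\tilde\gamma$ inside flat $\rr^{n+1}$ and compare $\Delta\mct_\var(\lambda)$ with its Euclidean model term by term: the intersection point $A_\var=\mu\cap\gamma_\var$ lies within distance $O(\var)$ of $A$; the unit normal directions, the geodesic bending of the normal rays, and the tangent hyperplane of $\gamma_\var$ at $A_\var$ all deviate from their flat counterparts by amounts controlled by the Christoffel symbols, which vanish at $A$ and are $O(\dist(\cdot,A))$ nearby; and the reflection law at $A_\var$ uses the metric there, which is flat up to $O(\var^2)$. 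Theorem \ref{forh} applied to the germ $\tilde\gamma$ then identifies the derivative of the Euclidean model with $X_{H_f}$, while the flat-induced and $M$-induced metrics on $\gamma$ agree at $A$ together with their first derivatives (their difference is $O(\dist^2)$), so that $X_{H_f}(\lambda)$ is the same whichever metric is used.

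The hard part will be the bookkeeping of orders in $\var$: I must verify that every curvature-induced discrepancy between $\Delta\mct_\var(\lambda)$ and its Euclidean model is $O(\var^2)$, so that it disappears upon differentiating at $\var=0$. The mechanism making this work is that the whole deformation-and-reflection event takes place in an $O(\var)$-neighborhood of $A$, on which $M$ is $C^2$-close to its Euclidean tangent model to order $O(\var^2)$; the normal shift producing $\gamma_\var$ acts over length $O(\var)$ with Christoffels $O(\dist)$, giving geodesic-bending corrections and tangent-hyperplane corrections at $A_\var$ that are $O(\var^2)$, while the displacement of $A_\var$ from $A$ is itself $O(\var)$. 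Once these estimates are in place the first-order terms coincide exactly with the flat case and Perline's formula transfers verbatim. As a conceptual check, the same answer is predicted by the optical (Hamilton--Jacobi) interpretation: to first order the change in geodesic length of the reflected ray under an outward normal displacement $\var f$ of the mirror equals $-2\var f(x)\cos\theta=-2\var f(x)\sqrt{1-||w||^2}$, the first variation of length being insensitive to curvature, which is precisely why the Hamiltonian is unchanged.
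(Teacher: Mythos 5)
Your proposal is correct and takes essentially the same approach as the paper's proof: both localize at a reflection point, pass to normal coordinates of $M$ centered there, use that the ambient metric and the flat metric of the chart have the same $1$-jet at that point to conclude that the families $\Delta\mct_{\var}$ built from the two metrics (viewed on $T_{<1}\gamma$) have the same derivative in $\var$ at $\var=0$ on the fiber over that point, and then invoke the Euclidean result (Theorem \ref{forh}) together with the fact that $\omega$ and $H_f$ are intrinsic to $\gamma$ and agree to the needed order for both metrics. The only difference is presentational: the paper compresses your explicit $O(\var^2)$ bookkeeping into the single assertion that the two reflection families have equal $1$-jets at the points $(x,w,0)$.
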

Theeorem \ref{forh1} follows easily  from its Euclidean version due to R.Perline (Theorem \ref{forh} in 
Subsection 1.4). 

We will deal not only with the global case, when $\gamma$ is a closed hypersurface, as above, 
but also with the case, when $\gamma$ is a germ of hypersurface. 
\begin{theorem} \label{theriem} The statements of Theorem \ref{tc1} and Corollary \ref{c1} hold for any hypersurface $\gamma$ 
as above and for any germ of hypersurface in any Riemannian manifold.
\end{theorem}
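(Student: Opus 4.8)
The plan is to observe that the Riemannian case reduces to the Euclidean one with essentially no new analytic work, because every ingredient feeding the proofs of Theorem \ref{tc1} and Corollary \ref{c1} has already been set up intrinsically on the unit ball bundle. First I would recall that, by the Melrose symplectic reduction of Subsection 1.5, the phase cylinder $\Pi$ is symplectomorphic to $T_{<1}\gamma$ equipped with the \emph{same} standard symplectic form $\omega=d\alpha$ built from the metric of $\gamma$ as in Subsection 1.4; here $\gamma$ is viewed purely as an abstract Riemannian manifold (a sphere in the closed convex case, a contractible domain in the germ case). Thus the target symplectic manifold, together with its distinguished class of Hamiltonian symplectomorphisms, is literally the same object as in the Euclidean setting.

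Next I would transfer the infinitesimal data. By Theorem \ref{forh1} the derivative $v_f=\frac{d}{d\var}\Delta\mct_{\var,f}|_{\var=0}$ is Hamiltonian on $\Pi\simeq T_{<1}\gamma$ with exactly the function $H_f(x,w)=-2\sqrt{1-\|w\|^2}f(x)$ of (\ref{hfw}), identical to Perline's Euclidean formula. Consequently the generators $\{v_f\}$, their Hamiltonian functions, and the Lie algebra $\gh$ they generate under Poisson bracket coincide with those of Subsection 1.4. Since Theorem \ref{tlidf} and its sharpenings \ref{tlidf1}, \ref{tlidf2}, \ref{tlidf3} are stated and proved for an \emph{arbitrary} Riemannian manifold $\gamma$, the $C^\infty$-density of $\gh$ in $C^\infty(T_{<1}\gamma)$ holds verbatim in the present situation. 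Passing from functions to vector fields through the Lie-algebra homomorphism $H\mapsto X_H$, whose kernel on the connected $\Pi$ consists of the constants, then yields the exact Riemannian analogues of the density Theorems \ref{tlidense} and \ref{tlidense2}: the Lie algebra generated by the $v_f$ is $C^\infty$-dense in the Hamiltonian vector fields on $\Pi$.

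With density of the generators in hand I would invoke the argument of Section 4 without modification. That argument is a purely symplectic statement: given a symplectic manifold $\Pi$, a smooth family $\Delta\mct_\var$ of symplectomorphisms with prescribed first derivative $v_f$ at $\var=0$, and the $C^\infty$-density of the Lie algebra generated by the $v_f$ in the Hamiltonian vector fields, one approximates an arbitrary $\Pi$-Hamiltonian symplectomorphism by finite compositions of the $\Delta\mct_{\var,h}$ and their inverses. The only Euclidean-specific inputs to that scheme are precisely the two facts just transferred, so the conclusion that the $C^\infty$-closure of $\mcg(\delta)$ contains the whole pseudogroup of $\Pi$-Hamiltonian symplectomorphisms follows identically. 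The Riemannian Corollary \ref{c1} then follows from the Riemannian Theorem \ref{tc1} exactly as in the Euclidean proof: for small $\var$ the geodesically shifted hypersurfaces $\gamma_{\var,h}$ are $(\alpha,k)$-close to $\gamma$, so reflections from them, composed with $\mct_\gamma$ and its inverse, realize all the generators $\Delta\mct_{\var,h}$ of $\mcg(\delta)$. The germ case is handled identically, the topology of $\Pi$ being contractible and hence even simpler.

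The only points genuinely requiring verification, and the ones I expect to be the main (though minor) obstacle, are the well-definedness and smoothness of $\Delta\mct_{\var,f}$ on compact subsets of $\Pi$ for small $\var$, and the metric-independence of the Section 4 flow-approximation machinery. The former follows from smooth dependence of the geodesic billiard map on the reflecting hypersurface together with the strict convexity and transversality hypotheses on $\gamma\subset M$, exactly paralleling the Euclidean verification after (\ref{detevf}); it guarantees that the normal geodesic shifts $\gamma_\var(x)$ produce symplectomorphisms defined on any fixed $K\Subset\Pi$ once $\var$ is small. The latter is automatic, since the commutator and flow-generation scheme uses only the first-order data $v_f$ together with general properties of smooth families of symplectomorphisms on an abstract symplectic manifold, never the ambient Euclidean structure; in particular the higher-order $\var$-jets of $\Delta\mct_\var$ do not enter, so the curvature of $M$ cannot obstruct the construction.
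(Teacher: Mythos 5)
Your proposal is correct and is essentially the paper's own proof: the paper likewise combines Theorem \ref{forh1} (the Riemannian Hamiltonian formula), Theorem \ref{tlidf} (whose proof was already carried out for an arbitrary Riemannian manifold $\gamma$) to get density of the Lie algebra generated by the $v_f$ in the Hamiltonian vector fields on $\Pi$, and then repeats the pseudogroup arguments of Section 4 without change. Your additional remarks on well-definedness of $\Delta\mct_{\var,f}$ on compact sets and metric-independence of the Section 4 machinery are just explicit spellings-out of what the paper leaves implicit.
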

Proofs of Theorems \ref{forh1} and \ref{theriem} will be given in Section 5.
\subsection{Historical remarks and an open problem}
In 1999 R.Peirone  studied dynamics of billiard in thin film formed by a hypersurface $\gamma$ and its 
given deformation $\gamma_\var$ with small $\var$. He proved the following transitivity result for small 
$\var$: {\it for every two points $p_1,p_2\in \gamma$ there exists 
a billiard orbit starting at $p_1$ that lands at $p_2$ after sufficiently many reflections} \cite{peirone}. 
A series of results on  dynamics in  thin film billiard, including results 
mentioned in Subsection 1.1 (calculation of the vector field $v_f$ and its Hamiltonian function), 
together with relation to geodesic flow 
were obtained in \cite{perline}. For other results and open 
problems, e.g., on relations to integrable PDE's, see \cite[sections 8, 9]{perline} and references to \cite{perline}.

 Corollary \ref{c1} states that the $C^{\infty}$-closure of the 
 {\it pseudogroup} generated by reflections from hypersurfaces close to $\gamma$ 
  contains the whole pseudogroup of $\Pi$-Hamiltonian diffeomorphisms between domains 
 in the phase cylinder $\Pi=\Pi_\gamma$. That is, each $\Pi$-Hamiltonian 
 diffeomorphism is the limit of a sequence of compositions of {\it reflections and 
 their inverses.} 
 \medskip
 
 {\bf Open Problem.} Is it true that for every closed strictly convex hypersurface $\gamma\subset\rr^{n+1}$ the 
 $C^{\infty}$-closure of the {\it pseudo-semigroup} generated by reflections from the hypersurface $\gamma$ and 
 from its small deformations {\it (without including their inverses)} contains the whole pseudogroup of $\Pi$-Hamiltonian diffeomorphisms between domains in the phase 
 cylinder $\Pi$?
 
\section{The Lie algebra in the case or curves. Proof of Theorems \ref{tlidf2}, \ref{tlidf3}, \ref{tlidf}, \ref{tlidense3}, \ref{tlidense}, \ref{tlidense2}}

In the present section we consider the case, when $\gamma$ is a connected curve equipped with a 
Riemannian metric. We prove  
Theorems \ref{tlidf2}, \ref{tlidf3} and Theorem \ref{tlidf} for curves. To do this, first in Subsection 2.1 we calculate Poisson brackets of  functions of type $H_{d,h}$ from (\ref{hdh}). 
Then we treat separately two cases, when $\gamma$ is respectively either an interval (Subsection 2.2), or a circle 
(Subsections 2.3, 2.4), and prove Theorems \ref{tlidf2}, \ref{tlidf3}, 
 \ref{tlidf}. Then we deduce Theorems \ref{tlidense}, \ref{tlidense2}, \ref{tlidense3} in Subsection 2.5. 

\subsection{Calculation of Poisson brackets}
We work in the space $T_{<1}\gamma=\gamma\times(-1,1)$ equipped with coordinates $(s,w)$. Here $s$ is the 
natural parameter of the curve $\gamma$, and $w\in(-1,1)$ is the coordinate of tangent vectors to $\gamma$ 
with respect to the basic vector $\frac{\partial}{\partial s}$. We identify a point of the curve $\gamma$ with the 
corresponding parameter $s$. Recall that the canonical 
symplectic structure of $T_{<1}\gamma$ is the standard symplectic structure $dw\wedge ds$. 
Therefore the Poisson bracket of two functions $F$ and $G$ is equal to 
\begin{equation}\{ F, G\}=\frac{\partial F}{\partial w}\frac{\partial G}{\partial s}-\frac{\partial F}{\partial s}\frac{\partial G}{\partial w}.\label{poisbr1}\end{equation}
We write formulas for Poisson brackets of  functions from (\ref{hdh}) in the coordinates 
$$(s,y), \ \ \ y=\frac w{\sqrt{1-w^2}},$$
in which they take simpler forms.  Recall that for every $d\in\zz_{\geq0}$ and every function $h(s)$ we set  
$$H_{d,h}(s,y):=\frac{y^d}{\sqrt{1+y^2}}h(s),  \ \  \ H_{-1,h}(s,y):=0,$$ 
see (\ref{hdh}), and for every function $h(s)$ on $\gamma$ the vector field $v_h$ on $T_{<1}\gamma$ 
 is Hamiltonian with the Hamiltonian function $-2H_{0,h}=H_{0,-2h}$, see (\ref{hfxy}). 

\begin{proposition} For every $d,k\in\zz_{\geq0}$ and 
any two functions $f(s)$, $g(s)$ one has 
\begin{equation}\{H_{d,f},H_{k,g}\}=H_{d+k-1, dfg'-kf'g}+
H_{d+k+1, (d-1)fg'-(k-1)f'g}.\label{poissform}
\end{equation}
\end{proposition}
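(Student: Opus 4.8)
The plan is to compute everything in the coordinates $(s,y)$, in which the functions $H_{d,h}$ take the simple product form $\frac{y^d}{\sqrt{1+y^2}}h(s)$. By Convention \ref{conve} the Poisson bracket of two functions written in $(s,y)$ is the $Y$-pushforward of the bracket (\ref{poisbr1}) taken with respect to $dw\wedge ds$, so the first task is to rewrite (\ref{poisbr1}) in the variable $y$. Since $y=y(w)$ depends on $w$ alone and $s$ is untouched by the renormalization $Y$, one has $\frac{\partial}{\partial s}\big|_w=\frac{\partial}{\partial s}\big|_y$ and $\frac{\partial}{\partial w}=\frac{dy}{dw}\frac{\partial}{\partial y}$; using $1-w^2=(1+y^2)^{-1}$ a one-line computation gives $\frac{dy}{dw}=(1-w^2)^{-3/2}=(1+y^2)^{3/2}$. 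Hence
$$\{F,G\}=(1+y^2)^{3/2}\left(\frac{\partial F}{\partial y}\frac{\partial G}{\partial s}-\frac{\partial F}{\partial s}\frac{\partial G}{\partial y}\right).$$

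The one genuinely useful computation is the $y$-derivative of the common profile $\frac{y^m}{\sqrt{1+y^2}}$. Differentiating and factoring out $(1+y^2)^{-3/2}$ yields
$$\frac{\partial}{\partial y}\frac{y^m}{\sqrt{1+y^2}}=(1+y^2)^{-3/2}\bigl(m\,y^{m-1}+(m-1)\,y^{m+1}\bigr),$$
the appearance of the two exponents $m-1$ and $m+1$ being exactly what produces the two terms in (\ref{poissform}).

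It then remains to substitute $F=H_{d,f}$ and $G=H_{k,g}$ and collect. The $s$-derivatives only act on $f$ and $g$, giving $\frac{y^d}{\sqrt{1+y^2}}f'$ and $\frac{y^k}{\sqrt{1+y^2}}g'$, while the $y$-derivatives are supplied by the profile formula above. Every resulting monomial carries the combined factor $(1+y^2)^{3/2}\cdot(1+y^2)^{-3/2}\cdot(1+y^2)^{-1/2}=(1+y^2)^{-1/2}$, and grouping the powers $y^{d+k-1}$ and $y^{d+k+1}$ leaves
$$(1+y^2)^{-1/2}\Bigl[(dfg'-kf'g)\,y^{d+k-1}+\bigl((d-1)fg'-(k-1)f'g\bigr)y^{d+k+1}\Bigr],$$
which is precisely $H_{d+k-1,\,dfg'-kf'g}+H_{d+k+1,\,(d-1)fg'-(k-1)f'g}$, as claimed.

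There is no real obstacle beyond the bookkeeping of exponents and powers of $(1+y^2)$; the only point requiring care is the degenerate case $d=k=0$, where $d+k-1=-1$. There the first summand is read through the convention $H_{-1,h}:=0$, and consistently its coefficient $dfg'-kf'g$ vanishes, so (\ref{poissform}) correctly reduces to $\{H_{0,f},H_{0,g}\}=H_{1,\,f'g-fg'}$.
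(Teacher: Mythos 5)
Your proposal is correct and takes essentially the same route as the paper: both proofs reduce to the derivative identity producing $my^{m-1}+(m-1)y^{m+1}$ from the profile $\frac{y^m}{\sqrt{1+y^2}}$ and then collect the powers $y^{d+k\mp1}$. The only cosmetic difference is bookkeeping of the chain rule — the paper differentiates the profile directly in $w$ (keeping the bracket in its $(s,w)$ form), while you first rewrite the bracket in $(s,y)$ coordinates via the factor $(1+y^2)^{3/2}$; the two computations coincide term by term.
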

\begin{proof} For every $m\in\zz_{\geq0}$ one has 
$$\frac{y^m}{\sqrt{1+y^2}}=\frac{w^m}{(\sqrt{1-w^2})^{m-1}},$$
$$\frac{\partial}{\partial w}\left(\frac{y^m}{\sqrt{1+y^2}}
\right)=\frac{mw^{m-1}}{(\sqrt{1-w^2})^{m-1}}+\frac{(m-1)w^{m+1}}{(\sqrt{1-w^2})^{m+1}}
=my^{m-1}+(m-1)y^{m+1}.$$
Substituting the latter expression to (\ref{poisbr1}) yields 
$$\{H_{d,f},H_{k,g}\}=\frac{(dy^{d-1}+(d-1)y^{d+1})y^kfg'-(ky^{k-1}+(k-1)y^{k+1})y^dgf'}{\sqrt{1+y^2}}.$$
This implies (\ref{poissform}). 
\end{proof}

\subsection{Case, when $\gamma$ is an interval. Proof of Theorems \ref{tlidf2}, \ref{tlidf}, \ref{tlidense3}} 
Recall that  for every $d\in\zz_{\geq0}$  
$\La_d$ denotes the vector space of functions 
of the type $H_{d,f}$, see (\ref{hdh}), 
where $f(s)$ runs through all the $C^{\infty}$-smooth 
functions in one variable. Let 
$$\pi_k:\oplus_{d=0}^{+\infty}\La_d\to\La_k$$
 denote the 
projection to the $k$-th component.  
\begin{proposition}  \label{cdense} One has 
\begin{equation}\{\La_0,\La_0\}=\La_1, \ \ \{\La_1,\La_1\}\subset\La_1,\label{01}\end{equation}
\begin{equation}\{\La_d,\La_k\}\subset\La_{d+k-1}\oplus\La_{d+k+1} \text{ whenever } 
(d,k)\neq(1,1),(0,0),\label{dk}\end{equation}
\begin{equation}\pi_{k+1}(\{\La_0,\La_k\})=\La_{k+1} \text{ for every } k\geq1.
\label{k+1}\end{equation}
\end{proposition}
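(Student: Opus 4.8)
The plan is to read off everything directly from the master formula
$$\{H_{d,f},H_{k,g}\}=H_{d+k-1,\,dfg'-kf'g}+H_{d+k+1,\,(d-1)fg'-(k-1)f'g}$$
(equation (\ref{poissform})), which already does all the analytic work. The three claims are just specializations, so the proof should be short arithmetic.

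For (\ref{01}), I set $d=k=0$. The first term becomes $H_{-1,\,0}=0$ by convention, and the second becomes $H_{1,\,-fg'+f'g}=H_{1,\,f'g-fg'}$. So $\{H_{0,f},H_{0,g}\}=H_{1,\,f'g-fg'}\in\La_1$, giving $\{\La_0,\La_0\}\subset\La_1$. For the reverse inclusion I must check that every $H_{1,h}$ arises as $f'g-fg'$ for suitable $f,g$; the Wronskian-type expression $f'g-fg'$ realizes an arbitrary smooth $h$, e.g. by taking $g\equiv 1$ and $f=-\int h\,ds$ (on an interval) so that $f'g-fg'=f'=-h$, and rescaling — on the circle one may need $\int h=0$, but that constraint is exactly what the circle theorem (\ref{algg}) later reflects, so for the present proposition (stated for the interval case, Subsection 2.2) the antiderivative is globally defined and the inclusion is onto. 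For $\{\La_1,\La_1\}$ I set $d=k=1$: the second term carries coefficients $(d-1)=(k-1)=0$, hence vanishes, and the first term is $H_{1,\,fg'-f'g}\in\La_1$, proving $\{\La_1,\La_1\}\subset\La_1$.

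For (\ref{dk}) I simply observe from (\ref{poissform}) that the output always lands in $\La_{d+k-1}\oplus\La_{d+k+1}$; the only way to leave this bidegree pattern is to have a term collapse. The convention $H_{-1,\cdot}=0$ means the $\La_{d+k-1}$ summand is automatically absent precisely when $d+k-1=-1$, i.e. $(d,k)=(0,0)$, which is excluded. For (\ref{01}) the pair $(1,1)$ was handled separately because there the $\La_{d+k+1}=\La_3$ component is killed by the vanishing coefficients, so again the generic two-term statement would be misleading; excluding $(1,1)$ keeps (\ref{dk}) literally correct as a two-summand containment.

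For (\ref{k+1}) I take $d=0$ and general $k\geq 1$. The top component of $\{H_{0,f},H_{k,g}\}$ is the $\La_{k+1}$ term $H_{k+1,\,(0-1)fg'-(k-1)f'g}=H_{k+1,\,-fg'-(k-1)f'g}$, so $\pi_{k+1}(\{\La_0,\La_k\})$ consists of all $H_{k+1,h}$ with $h=-fg'-(k-1)f'g$ for $f,g\in C^\infty$. The remaining task, and the only real content, is surjectivity: I must show these expressions exhaust all smooth $h$. The cleanest route is to fix $g$ and vary $f$: with $g$ a nonvanishing function the map $f\mapsto -fg'-(k-1)f'g=-g^{-(k-1)}\frac{d}{ds}\!\left(g^{k-1}\cdot(\,\cdot\,)\right)$-type first-order operator is surjective onto $C^\infty$ on an interval by solving the resulting linear ODE for $f$ given any target $h$ (choosing, say, $g\equiv 1$ reduces it to $h=-fg'-(k-1)f'g=-(k-1)f'$, solvable by integration since $k\geq 1$ forces $k-1\geq 0$; if $k=1$ this degenerates and one instead picks a nonconstant $g$ so the $fg'$ term is active). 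This surjectivity on the interval is exactly where I expect to spend the most care: I would split into $k=1$ and $k\geq 2$, in each case exhibiting an explicit $(f,g)$ (or solving a first-order linear ODE with nonvanishing leading coefficient) realizing an arbitrary prescribed $h$, which is the main obstacle but still elementary.
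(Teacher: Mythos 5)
Your proposal is correct and takes essentially the same route as the paper: all three statements are read off the master formula (\ref{poissform}), and the surjectivity claims in (\ref{01}) and (\ref{k+1}) are established by explicit Wronskian-type choices (an antiderivative of $h$ paired with $g\equiv1$, with the degenerate case $k=1$ handled by a nonconstant $g$), which is precisely the paper's argument --- the paper dismisses (\ref{k+1}) with the word ``analogous'', so you actually spell out slightly more detail. (Your parenthetical aside that on the circle one would need $\int h\,ds=0$ is not quite right --- the paper later proves $\{\La_0,\La_0\}=\La_1$ on the circle as well, see (\ref{laol}); the zero-average constraint only enters for odd degrees $\geq3$ --- but this is immaterial, since the proposition lives in the interval case.)
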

\begin{proof}
Inclusion (\ref{dk}) and  the right inclusion in (\ref{01}) follow immediately from  (\ref{poissform}). 
Let us prove the left formula in (\ref{01}). One has 
\begin{equation}\{ H_{0,f}, H_{0,g}\}=H_{1,f'g-g'f},\label{h00}\end{equation}
by (\ref{poissform}). It is clear that each function $\eta(s)$ can be represented 
by an expression $f'g-g'f$, since the functions in question are defined on an 
interval. For example, one can take $f=\int_{s_0}^s\eta(\tau)d\tau$ and $g\equiv1$.   This proves the left formula in (\ref{01}). The proof of statement (\ref{k+1}) 
is analogous. \end{proof}

\begin{proof} {\bf of Theorem \ref{tlidf2}.} The Hamiltonians of the vector 
fields $v_f$ are the functions $-2H_{0,f}$. The Lie algebra $\gh$ 
generated by them coincides with $\oplus_{k=0}^{\infty}\La_k$, 
by Proposition \ref{cdense}. This proves Theorem \ref{tlidf2}.
\end{proof} 
\begin{proof} {\bf of Theorem \ref{tlidf} for $\gamma$ being an interval.} Each function from the direct sum $\gh=\oplus_{k=0}^{\infty}\La_k$ is  
 $\frac1{\sqrt{1+y^2}}$ times a polynomial in $y$ with 
coefficients depending on $s$. The latter polynomials include all the polynomials in $(s,y)$, 
which are $C^{\infty}$-dense in the space of $C^{\infty}$ functions in $(s,y)\in\gamma\times\rr$ 
(Weierstrass Theorem). 
Therefore, $\gh$ is also dense. Theorem \ref{tlidf} is proved in the case, when $\gamma$ is an interval. 
\end{proof}

\subsection{Case of closed curve. Proof of Theorem \ref{tlidf3}}

Let $H_{d,h}$ be functions  given by formula (\ref{hdh}); now  
$h(s)$ being $C^{\infty}$-smooth functions on the circle equipped with the natural length parameter $s$. 
Recall that we consider that its length 
is equal to $2\pi$, rescaling the metric by constant factor.  Let 
 $\La_d$ be the vector space of all the functions $H_{d,h}$. Set 
 $$\La_{d,0}:= \{ H_{d,h}\in\La_d \ | \ \int_0^{2\pi}h(s)ds=0\}.$$
For every $C^{\infty}$-smooth function $h:S^1=\rr\slash2\pi\zz\to\rr$ 
 set 
 $$\wh h:=\frac1{2\pi}\int_0^{2\pi}h(s)ds.$$

In the proof of  Theorem \ref{tlidf3} we use the following four propositions. 

\begin{proposition} For every $d,k\in\zz_{\geq0}$ and every pair of smooth functions $f(s)$ 
and $g(s)$ on the circle one has
$$\{ H_{d,f}, H_{k,g}\}=H_{d+k-1,h_{d+k-1}(s)}+H_{d+k+1,h_{d+k+1}(s)},$$
  \begin{equation} (d+k-2)\wh h_{d+k-1}=(d+k)\wh h_{d+k+1}.\label{rapav}\end{equation}
  \end{proposition}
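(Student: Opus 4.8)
The plan is to read off the two coefficient functions directly from the already-established formula (\ref{poissform}) and then integrate each of them over the circle. Formula (\ref{poissform}) gives
$$\{H_{d,f},H_{k,g}\}=H_{d+k-1,\,dfg'-kf'g}+H_{d+k+1,\,(d-1)fg'-(k-1)f'g},$$
so the decomposition asserted in the statement holds with $h_{d+k-1}=dfg'-kf'g$ and $h_{d+k+1}=(d-1)fg'-(k-1)f'g$. It then remains only to verify the averaging identity (\ref{rapav}) relating $\wh h_{d+k-1}$ and $\wh h_{d+k+1}$.

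First I would express both averages in terms of the single quantity $I:=\int_0^{2\pi}f(s)g'(s)\,ds$. The key tool is integration by parts: since $f$ and $g$ are defined on the circle, the product $fg$ is $2\pi$-periodic, so $\int_0^{2\pi}(fg)'\,ds=0$ and hence $\int_0^{2\pi}f'g\,ds=-I$. Substituting this into the definitions gives
$$\wh h_{d+k-1}=\int_0^{2\pi}(dfg'-kf'g)\,ds=dI+kI=(d+k)I,$$
$$\wh h_{d+k+1}=\int_0^{2\pi}\big((d-1)fg'-(k-1)f'g\big)\,ds=(d-1)I+(k-1)I=(d+k-2)I.$$
Multiplying the first average by $(d+k-2)$ and the second by $(d+k)$ then yields $(d+k-2)\wh h_{d+k-1}=(d+k-2)(d+k)I=(d+k)\wh h_{d+k+1}$, which is exactly (\ref{rapav}).

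I do not expect any serious obstacle here: the statement is essentially a bookkeeping consequence of (\ref{poissform}) together with a single integration by parts. The only point deserving attention — and, I suspect, the reason this proposition is singled out for the circle rather than the interval — is precisely the vanishing of the boundary term. On a closed curve $\int_0^{2\pi}f'g\,ds$ and $-\int_0^{2\pi}fg'\,ds$ coincide, which forces both averages to be proportional to the same $I$ and produces the rigid linear relation (\ref{rapav}). On an interval the boundary terms would in general survive, $\wh h_{d+k-1}$ and $\wh h_{d+k+1}$ would be independent, and no such constraint would appear; this is ultimately what makes $\gh$ in the circle case a proper subalgebra of $\oplus_k\La_k$, as recorded in Theorem \ref{tlidf3}.
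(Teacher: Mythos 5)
Your proof is correct and follows essentially the same route as the paper: both read off $h_{d+k\mp1}$ from (\ref{poissform}) and use the vanishing of $\int_0^{2\pi}(fg)'\,ds$ on the circle (your integration by parts is the same fact) to conclude that $\wh h_{d+k-1}=(d+k)I$ and $\wh h_{d+k+1}=(d+k-2)I$ with $I=\int_0^{2\pi}fg'\,ds$, which gives (\ref{rapav}). If anything, your formulation via the products rather than the ratio of the averages is slightly cleaner, since it avoids the degenerate case $d+k=2$ where the paper's phrase ``the ratio equals $\frac{d+k}{d+k-2}$'' would need a caveat.
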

\begin{proof} The first formula in (\ref{rapav}) holds with 
\begin{equation}h_{d+k-1}=(d+k)fg'-k(fg)', \ h_{d+k+1}=(d+k-2)fg'-(k-1)(fg)',\label{dkfg}\end{equation}
by (\ref{poissform}). This together with the fact that the derivative $(fg)'$ has zero 
average implies that the ratio of averages of the functions $h_{d+k\mp1}$ is equal to 
$\frac{d+k}{d+k-2}$ and proves (\ref{rapav}).
\end{proof}

It is clear that the Lie algebra $\gh$ is contained in the direct sum of the subspaces 
$\La_j$, by (\ref{poissform}). 
Recall that for every $j\in\zz_{\geq0}$ by $\pi_j$ 
we denote the projection of the latter direct sum to the $j$-th component 
$\La_j$. 

\begin{proposition} For every $d,k\in\zz_{\geq0}$  one has 
\begin{equation}\{\La_d,\La_k\}\subset\La_{d+k-1}\oplus\La_{d+k+1},
\label{osum}\end{equation}
\begin{equation}\pi_{d+k+1}(\{\La_d,\La_k\})=\pi_{d+k+1}(\{\La_{d,0},\La_{k,0}\})=\begin{cases} 0 \text{ if } d=k=1,\\
\La_{d+k+1}, \text{ if } d+k\neq2,\\
\La_{3,0}, \text{ if } \{ d,k\}=\{0,2\}.\end{cases}
\label{j=2}\end{equation}
In particular,
\begin{equation}\{\La_0,\La_0\}=\La_1.\label{laol}\end{equation} 
\end{proposition}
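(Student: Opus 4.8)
The plan is to reduce both displayed identities to a single question about the bilinear differential expression appearing as the top symbol of the bracket. First I would observe that (\ref{osum}) is immediate: formula (\ref{poissform}) shows that $\{H_{d,f},H_{k,g}\}$ has nonzero components only in degrees $d+k-1$ and $d+k+1$. Reading off the top component of (\ref{poissform}) gives $\pi_{d+k+1}\{H_{d,f},H_{k,g}\}=H_{d+k+1,\,B(f,g)}$, where I set $B(f,g):=(d-1)fg'-(k-1)f'g$. Hence proving (\ref{j=2}) amounts to computing the linear span of $\{B(f,g)\}$ inside $C^\infty(\gamma)$ twice: once as $f,g$ range over all smooth functions, and once as they range over the zero-average functions underlying $\La_{d,0},\La_{k,0}$, and then checking that the two spans coincide and equal the claimed target.

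The trichotomy is explained by a single average computation. Integration by parts on the circle gives $\oint B(f,g)\,ds=-(d+k-2)\oint f'g\,ds$. If $d=k=1$ then $B\equiv0$, giving the first case. If $d+k=2$ but $(d,k)\neq(1,1)$, i.e.\ $\{d,k\}=\{0,2\}$, the average always vanishes, so the span is confined to the zero-average subspace $\La_{3,0}$. If $d+k\neq2$, choosing $f=\cos s$, $g=\sin s$ (which are themselves zero-average) makes the right-hand side nonzero, so the span is not confined to zero-average functions.

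For the \emph{unrestricted} spans surjectivity comes cheaply from constant test functions: with $g\equiv1$ one gets $B=-(k-1)f'$, and with $f\equiv1$ one gets $B=(d-1)g'$, so as soon as $(d,k)\neq(1,1)$ the span already contains every zero-average function. Combined with the nonzero-average element from the previous step (when $d+k\neq2$) this produces all of $C^\infty(\gamma)=\La_{d+k+1}$, while for $\{d,k\}=\{0,2\}$ it produces exactly $\La_{3,0}$. The in-particular statement (\ref{laol}) is the subcase $d=k=0$, where $B(f,g)=f'g-fg'$ and the Wronskian of $f=\cos s$, $g=\sin s$ equals the constant $-1$, so $\{\La_0,\La_0\}=\La_1$ follows outright.

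The genuine content is that \emph{restricting to zero-average} $f,g$ does not shrink the top-component span. Here constant test functions are unavailable, so I would instead symmetrize: $B(f,g)\mp B(g,f)$ yields $(d+k-2)(fg'-f'g)$ and $(d-k)(fg)'$, and already $fg'-f'g$ with $f=\cos s$, $g=\sin s$ gives the constant, while products and Wronskians of trigonometric monomials generate all trigonometric polynomials; thus the restricted span is at least dense. The hard part will be upgrading this density to the \emph{exact} equality asserted: since every zero-average function must vanish somewhere, one cannot fix a single factor and globally invert the resulting first-order linear ODE, as the inversion is singular at its zeros. I expect to resolve this by localization—split the target by a partition of unity into pieces supported on arcs avoiding the zeros of a chosen factor such as $\cos s$ or $\sin s$, solve the now-nonsingular ODE arc by arc, and absorb the finitely many periodicity and zero-average constraints using the remaining linear freedom. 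This bookkeeping is where the main difficulty lies.
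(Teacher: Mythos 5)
Your reduction to the top symbol $B(f,g)=(d-1)fg'-(k-1)f'g$, the integration-by-parts identity $\oint B(f,g)\,ds=-(d+k-2)\oint f'g\,ds$ (this is exactly the paper's (\ref{rapav})), the resulting trichotomy, and the computation of the \emph{unrestricted} span $\pi_{d+k+1}(\{\La_d,\La_k\})$ via the constant test functions $f\equiv1$, $g\equiv1$ are all correct, and they settle (\ref{osum}) and (\ref{laol}) just as the paper does. The genuine gap is exactly where you place it, and it is not mere bookkeeping: the statement $\pi_{d+k+1}(\{\La_{d,0},\La_{k,0}\})=\La_{d+k+1}$ (resp.\ $=\La_{3,0}$ when $\{d,k\}=\{0,2\}$) requires every smooth $h$ (resp.\ every zero-average $h$) to be a \emph{finite} sum $\sum_l B(f_l,g_l)$ with all $f_l,g_l$ of zero average. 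Your trigonometric-monomial argument produces only trigonometric polynomials, a dense proper subspace, and density is strictly weaker than the asserted equality (which is what the paper later needs, e.g.\ to prove that $\gh$ actually \emph{contains} $\La_{3,0}$ and to obtain the exact description (\ref{algg})). The localization sketch does not close this: fixing $f=\cos s$ and solving $(d-1)fg'-(k-1)f'g=h_l$ on an arc $I$ between zeros of $f$, the general solution is $g=f^{\alpha}\bigl(C+\tfrac1{d-1}\int h_l\,f^{-\alpha-1}\,ds\bigr)$ with $\alpha=(k-1)/(d-1)$; a solution compactly supported in $I$ exists only when the weighted moment $\int_I h_l\,f^{-\alpha-1}\,ds$ vanishes, and otherwise $g$ behaves like the fractional power $|f|^{\alpha}$ at an endpoint, hence does not extend smoothly through the zeros of $f$. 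Showing that these finitely many weighted moment conditions, together with the zero-average requirements on $f_l$ and $g_l$, can always be absorbed is precisely the hard analytic content, and it is left unproven.

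For comparison, the paper fills this step by an explicit Fourier-series construction that needs only two pairs (of complex functions). For $d+k\neq2$, say $d\neq1$, it takes $f_1=e^{is}$ and solves $B(f_1,g_1)=h$ mode by mode with $g_1=\sum_{n\neq1,\,n(d,k)}b_ne^{i(n-1)s}$; the coefficient equation is solvable for every mode except the resonant one $n(d,k)=\frac{k-1}{d-1}+1$, and the two leftover modes are then killed by a single additional pair of trigonometric monomials $(f_2,g_2)$, where the condition $d+k-2\neq0$ guarantees a non-resonant choice of the auxiliary exponent $p$. For $\{d,k\}=\{0,2\}$, where $B(f,g)=-(fg)'$, the paper instead proves that every zero-average function is a sum of four products of zero-average functions, by splitting the Fourier series into its $e^{\pm is}$ part and the rest and factoring out $e^{3is}$, resp.\ $e^{is}$. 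Some such exact argument (or a completed version of your localization scheme) must replace your density claim; as written, your proof establishes the proposition only with ``dense in'' in place of ``equal to'' for the zero-average spans.
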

\begin{proof} For $d=k=1$ formula (\ref{j=2}) follows from (\ref{poissform}). For 
$d=0$, $k=2$ one has $\pi_3(\{\La_0,\La_2\})\subset\La_{3,0}$, by (\ref{rapav}): the left-hand 
side in (\ref{rapav}) vanishes, hence, $\wh h_{d+k+1}=\wh h_3=0$. 
Let us prove that in fact, the latter inclusion is equality and moreover, 
\begin{equation}\pi_3(\{\La_{0,0},\La_{2,0}\})=\La_{3,0}.\label{2=3}\end{equation}
 Indeed, for every two functions 
$f$ and $g$ on the circle one has 
$$\{ H_{0,f}, H_{2,g}\}=H_{1,-2f'g}+H_{3,-(fg)'},$$
by (\ref{poissform}). It is clear that every function $h$ on the circle with 
zero average is a derivative of some function $-f$ on the circle (we choose $f$ 
with zero average). 
Hence, $h=-f'=-(fg)'$ with $g\equiv1$. This already implies  the formula $\pi_3(\{\La_{0},\La_{2}\})=\La_{3,0}$, 
but not the above formula (\ref{2=3}): the function $g\equiv 1$ 
does not have zero average. To prove (\ref{2=3}), let us show that every function $f$ 
with zero average can be represented as a sum $\sum_{j=1}^4f_jg_j$ with 
$f_j$ and $g_j$ being smooth functions of zero average. Indeed, $f$ is the sum of 
  a linear combination $f_{=1}(s)=ae^{is}+\bar a e^{-is}$, $a\in\cc$, 
 and a Fourier series $f_{\geq2}$ containing only $e^{ins}$  with $|n|\geq2$. 
It is clear that $f_{=1}(s)=e^{3is}(e^{-3is}f_{=1}(s))$ 
and $f_{\geq2}=e^{is}(e^{-is}f_{\geq2})$, and the latter are products of two complex functions 
with zero average. Their real parts are obviously  sums of pairs of such products. Therefore, 
$f$ can be represented as a sum of four such products. This together with the above 
discussion implies that for every function $h$ with zero average 
the function $H_{3,h}$ is the $\pi_3$-projection of a sum of four Poisson brackets 
$\{ H_{0,f_j}, H_{2,g_j}\}$ with $f_j$, $g_j$ being of zero average. This proves (\ref{2=3}) and the third formula 
in (\ref{j=2}).

Let us now treat the remaining middle case: $d+k\neq2$. To do this, it suffices to 
show that every smooth function $h(s)$ on the circle can be 
represented as a  finite sum 
\begin{equation} h=\sum_{l=1}^N h_{d+k+1, l}, \ h_{d+k+1,l}:=(d-1)f_lg_l'-(k-1)f_l'g_l,\label{eqsolve}
\end{equation}
 see (\ref{rapav}) and (\ref{poissform}), where $f_l(s)$ and $g_l(s)$ are smooth 
functions on the circle with zero average. Moreover, 
it suffices to prove the same statement for complex-valued functions. Indeed, if (\ref{eqsolve}) 
holds for a complex function $h(s)$ and a finite collection of pairs $(f_l(s),g_l(s))$ of complex functions, 
$l=1,\dots,N$, then the similar equality holds for the function 
$\re h(s)$ and the collection of pairs $(\re f_l,\re g_l)$, $(-\im f_l, \im g_l)$ taken for all $l$. Let, say, $d\neq1$. 
Let us write a complex function $h(s)$ as a Fourier series 
$$h(s)=\sum_{n\in\zz}a_ne^{ins}.$$
Set 
$$f_1(s)=e^{is}, \ g_1(s)=\sum_{n\in\zz_{\neq1}}b_ne^{i(n-1)s}.$$
Set $h_{d+k+1, 1}:=(d-1)f_1g_1'-(k-1)f_1'g_1$. One has  
\begin{equation} h(s)-h_{d+k+1,1}(s)=a_1e^{is}+\sum_{n\neq1}(a_n-ib_n((d-1)(n-1)-(k-1)))e^{ins}.\label{anbnk}\end{equation}
We would like to make the above difference zero. 
For each individual $n\neq 1$ one can solve the equation 
$$a_n-ib_n((d-1)(n-1)-(k-1))=0$$
 in $b_n$, provided that $(d-1)(n-1)\neq k-1$, i.e., $n\neq n(d,k):=\frac{k-1}{d-1}+1$. Take $b_n$ found from  the above 
 equation for all $n\neq 1, n(d,k)$. They yield  a converging and $C^{\infty}$-smooth Fourier series 
$$g_1(s)=\sum_{n\neq 1, n(d,k)}b_ne^{i(n-1)s},$$
since so is $h(s)=\sum_{n\in\zz}a_ne^{ins}$ and $b_n=o(a_n)$, as $n\to\infty$. 
The corresponding function $h_{d+k+1,1}$, see (\ref{eqsolve}), satisfies the equality 
\begin{equation}h(s)-h_{d+k+1,1}(s)=a_1e^{is}+a_{n(d,k)}e^{isn(d,k)}.
\label{hdk1}\end{equation}
Now we set $f_2(s)=e^{(p+1)is}$ with some $p\in \zz\setminus\{0,-1,n(d,k)-1\}$, and we would like to find a function 
$$g_2(s)=c_{1}e^{-pis}+c_2e^{i(n(d,k)-(p+1))s}$$
such that $h=h_{d+k+1,1}+h_{d+k+1,2}$, see (\ref{eqsolve}). The latter equation is equivalent to the equation
\begin{equation}a_1e^{is}+a_{n(d,k)}e^{isn(d,k)}=(d-1)f_2(s)g_2'(s)-(k-1)f_2'(s)g_2(s),
\label{eqso}\end{equation}
by (\ref{hdk1}). Its right-hand side divided by $i$ equals $c_1(-p(d-1)-(p+1)(k-1))e^{is}+c_2((d-1)(n(d,k)-(p+1))-(k-1)(p+1))e^{isn(d,k)}$.
 Therefore, one can find constant coefficients $c_1$, $c_2$  in the  definition of the function $g_2$ such that equation (\ref{eqso}) holds, if 
 \begin{equation}(n(d,k)-(p+1))(d-1)-(p+1)(k-1)\neq0,   \ p(d-1)+(p+1)(k-1)\neq0.\label{2ineq}\end{equation}
 The left-hand sides of inequalities (\ref{2ineq}) are linear non-homogeneous functions in $p$ with coefficients at 
 $p$ being equal to $\mp(d+k-2)\neq0$. Hence, choosing appropriate $p\in\zz\setminus\{0,-1,n(d,k)-1\}$ 
 one can achieve that inequalities (\ref{2ineq}) hold and hence, equation (\ref{eqso}) can be solved in $c_1$, $c_2$. 
   Finally, we have solved equation (\ref{eqsolve}) with an arbitrary complex 
 function $h$ and $N=2$, in complex functions $f_l$, $g_l$, $l=1,2$ with zero averages. 
 This together with the above discussion 
finishes the proof of statement (\ref{j=2}). Statement (\ref{laol}) follows 
from the second statement in (\ref{j=2}).
\end{proof}

\begin{proposition} \label{pcont}
 The Lie algebra $\gh$ contains the direct sum 
$$\ggo:=\La_1\oplus(\oplus_{d\in 2\zz_{\geq0}}\La_d)\oplus(\oplus_{d\in2\zz_{\geq1}+1}\La_{d,0}).
$$
\end{proposition}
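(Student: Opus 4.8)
The plan is to establish the inclusion $\ggo\subset\gh$ by a single induction that climbs the grading one level at a time, alternating between even levels, which I recover in full, and odd levels, of which I recover only the zero-average part $\La_{d,0}$. The two base cases are immediate: $\La_0\subset\gh$ because $\La_0$ is spanned by the generators, and $\La_1=\{\La_0,\La_0\}\subset\gh$ by (\ref{laol}). The inductive hypothesis at a given stage is that $\La_j\subset\gh$ for every even index $j$ below the current level and $\La_{j,0}\subset\gh$ for every odd index $j$ below it.

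For the even step I would fix $m\ge1$ and bracket $\La_{0,0}$ with $\La_{2m-1,0}$. By (\ref{osum}) the result lies in $\La_{2m-2}\oplus\La_{2m}$, and since $0+(2m-1)\ne2$, formula (\ref{j=2}) gives $\pi_{2m}(\{\La_{0,0},\La_{2m-1,0}\})=\La_{2m}$. The complementary lower component sits in the even level $\La_{2m-2}$ (for $m=1$ this is $\La_0$), which is fully in $\gh$ by induction, so it may be subtracted freely, yielding $\La_{2m}\subset\gh$. No averaging difficulty occurs here precisely because the residual lower level is even and hence entirely available.

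For the odd step the situation is more delicate, and this is where I expect the main obstacle. The base case $\La_{3,0}$ comes from the special value $\{d,k\}=\{0,2\}$: by (\ref{2=3}) one has $\pi_3(\{\La_{0,0},\La_{2,0}\})=\La_{3,0}$, and the complementary lower component lies in $\La_1\subset\gh$, so it can be subtracted. For $m\ge2$ I would bracket $\La_{0,0}$ with $\La_{2m,0}$; by (\ref{j=2}) the top projection is the full space $\La_{2m+1}$ (as $2m\ne2$), but the lower component now lands in the odd level $\La_{2m-1}$, of which only the zero-average part $\La_{2m-1,0}$ belongs to $\gh$. The key point is to subtract the lower component only when it is zero-average, and this is controlled by the average relation (\ref{rapav}): for brackets of elements of $\La_0$ and $\La_{2m}$ it reads $(2m-2)\wh h_{2m-1}=2m\,\wh h_{2m+1}$. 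Hence, if I aim for a zero-average target $H_{2m+1,h}$ with $\wh h=0$ in the top slot and write it as a finite sum of such brackets, summing (\ref{rapav}) over the summands forces the total lower component to have zero average as well (using $2m\ne2$). That residue therefore lies in $\La_{2m-1,0}\subset\gh$ and may be subtracted, giving $H_{2m+1,h}\in\gh$ and thus $\La_{2m+1,0}\subset\gh$.

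Running the induction in the interleaved order $\La_0,\La_1,\La_2,\La_{3,0},\La_4,\La_{5,0},\dots$ guarantees that every level invoked in a given step has been produced at an earlier stage, which finishes the argument. The delicate bookkeeping is entirely concentrated in the odd step: one must use (\ref{rapav}) to certify that, whenever the top component is chosen with zero average, the lower-degree residue is automatically zero-average and hence lies in the portion of $\gh$ already constructed. This is exactly the phenomenon that obstructs recovering the full odd levels and is responsible for the extra summand $\Psi$ in the eventual complete description of $\gh$.
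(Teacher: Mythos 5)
Your proof is correct and follows essentially the same route as the paper's own: the same interleaved induction $\La_0,\La_1,\La_2,\La_{3,0},\La_4,\La_{5,0},\dots$, with (\ref{j=2}) (and (\ref{2=3})) giving surjectivity of the top projection and (\ref{rapav}) certifying that a zero-average top component forces a zero-average odd residual, which can then be subtracted inside $\gh$. The paper carries out the steps up to $\La_{5,0}$ explicitly and then applies "the above argument successively"; your general even/odd inductive step is exactly that iteration made explicit.
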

\begin{proof} The algebra $\gh$ contains $\La_0$ and $\La_1=\{\La_0,\La_0\}$, see 
(\ref{laol}). It also contains $\La_2$,  by the latter statement and 
since $\{\La_0,\La_1\}\subset\La_0\oplus\La_2$, see (\ref{osum}), 
and $\pi_2(\{\La_0,\La_1\})=\La_2$, by (\ref{j=2}). Hence, 
$\gh\supset\oplus_{j=0}^2\La_j$. Analogously one has $\gh\supset\La_{3,0}$, 
by the latter statement, and (\ref{osum}), (\ref{j=2}) applied to $(d,k)=(0,2)$. Hence, 
$\gh\supset(\oplus_{j=0}^2\La_j)\oplus\La_{3,0}$. 
One has $\gh\supset \La_4$, by the latter statement and (\ref{osum}), (\ref{j=2}) applied to 
$(d,k)=(0,3)$. Hence, $\gh\supset(\oplus_{0\leq j\leq4, j\neq3}\La_j)\oplus\La_{3,0}$. 
Let us show that $\gh\supset\La_{5,0}$. Indeed, the bracket $\{\La_0,\La_4\}$ is 
contained in $\La_3\oplus\La_5$, see (\ref{osum}). For every pair of functions 
$f$ and $g$ on the circle one has 
$$\{ H_{0,f}, H_{4,g}\}=H_{3,h_3}+H_{5,h_5},$$
 where the ratio of averages of the 
functions $h_3$ and $h_5$ is equal to 2, see (\ref{rapav}). Therefore, if the average 
of the function $h_5$ vanishes, then so does the average of the function $h_3$. 
This implies that the subspace of those elements in $\{\La_0,\La_4\}$ whose 
projections to $\La_5$ 
have zero averages coincides with the subspace with the analogous property 
for the projection $\pi_3$. Recall that $\pi_5(\{\La_0,\La_4\})=\La_5$, by  (\ref{j=2}). 
This together with the above statement and the inclusion $\La_{3,0}\subset\gh$
implies that  $\gh$ contains $\La_{5,0}$. Applying the above 
argument successively to Poisson brackets $\{\La_0,\La_n\}$, $n\geq5$, we get 
the statement of Proposition \ref{pcont}.
\end{proof}

\begin{proposition} \label{prepsum} The Lie algebra $\gh$ is the direct sum of the subspace 
$\ggo$ and a vector subspace $\Psi$ in 
$$\mcp=\{ \frac{P(y)}{\sqrt{1+y^2}} \ | \ P \text{ is an odd polynomial },  \ P'(0)=0\}.$$
The corresponding subspace $\sqrt{1+y^2}\Psi$ in the space of polynomials 
is generated by the polynomials 
\begin{equation} R_j(y):=jy^{2j-1}+(j-1)y^{2j+1}, \ j\in\nn, \ j\geq2.\label{rdy}\end{equation}
\end{proposition}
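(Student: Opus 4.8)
The plan is to identify the complementary subspace $\Psi$ with $\gh\cap\mcp$ and then compute that intersection. First I would record the ambient splitting $\oplus_{j\geq0}\La_j=\ggo\oplus\mcp$: for odd $d\geq3$ one has $\La_d=\La_{d,0}\oplus\rr\cdot H_{d,1}$ with $H_{d,1}=y^d/\sqrt{1+y^2}$, so the constant-coefficient odd parts of degree $\geq3$ are exactly $\mcp$, while $\La_0$, $\La_1$, all even $\La_d$, and all $\La_{d,0}$ assemble into $\ggo$. Since $\{\La_d,\La_k\}\subseteq\La_{d+k-1}\oplus\La_{d+k+1}$ by (\ref{poissform}), the Lie algebra $\gh$ generated by $\La_0$ lies in $\oplus_j\La_j$, and Proposition \ref{pcont} gives $\ggo\subseteq\gh$. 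The elementary fact that $U\subseteq W\subseteq U\oplus M$ forces $W=U\oplus(W\cap M)$ then yields $\gh=\ggo\oplus\Psi$ with $\Psi:=\gh\cap\mcp$, so it remains to pin down $\Psi$.

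The heart of the matter is the projection of an arbitrary bracket onto $\mcp$ along $\ggo$, i.e. the average part of the bracket in odd degrees $\geq3$. Writing $m=d+k$ and using $\widehat{(fg)'}=0$, hence $\widehat{f'g}=-\widehat{fg'}$, the two coefficients in (\ref{poissform}) have averages $\widehat h_{m-1}=m\,\widehat{fg'}$ and $\widehat h_{m+1}=(m-2)\widehat{fg'}$ (which is exactly relation (\ref{rapav})). Therefore the average part of $\{H_{d,f},H_{k,g}\}$ equals
\[
\frac{\widehat{fg'}}{\sqrt{1+y^2}}\bigl(m\,y^{m-1}+(m-2)\,y^{m+1}\bigr).
\]
For $m=2j$ even with $j\geq2$ both exponents are odd and $\geq3$, and this is precisely $\frac{2\widehat{fg'}}{\sqrt{1+y^2}}R_j$; for $m$ odd the exponents are even, for $m=2$ the degree-$3$ average vanishes while the degree-$1$ term lies in $\La_1$, and for $m=0$ the bracket lies in $\La_1$. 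Hence in every case the $\mcp$-component of a bracket lies in $\Psi_0:=\operatorname{span}\{R_j/\sqrt{1+y^2} \ | \ j\geq2\}$.

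For the lower bound $\Psi_0\subseteq\Psi$ I take $d=0$, $k=2j$ and functions with $\widehat{fg'}\neq0$ (say $f=\cos s$, $g=\sin s$): since $\La_0,\La_{2j}\subseteq\ggo\subseteq\gh$, the bracket lies in $\gh$, its non-average parts lie in $\La_{2j-1,0}\oplus\La_{2j+1,0}\subseteq\ggo\subseteq\gh$, and subtracting them leaves $R_j/\sqrt{1+y^2}\in\gh$. For the reverse inclusion I set $\mathcal A:=\ggo\oplus\Psi_0$ and observe that, by the displayed computation together with bilinearity and $\{\La_d,\La_k\}\subseteq\La_{d+k-1}\oplus\La_{d+k+1}$, every bracket of elements of $\mathcal A$ splits as a $\ggo$-part plus an $\mcp$-part lying in $\Psi_0$; thus $\mathcal A$ is a Lie subalgebra. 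As $\mathcal A\supseteq\La_0$ and $\gh$ is the Lie algebra generated by $\La_0$, we get $\gh\subseteq\mathcal A$, whence $\gh=\mathcal A$ and $\Psi=\Psi_0$, i.e. $\sqrt{1+y^2}\,\Psi=\operatorname{span}\{R_j \ | \ j\geq2\}$.

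The main obstacle is precisely the closedness of $\mathcal A$ under the bracket, i.e. the upper bound: one must guarantee that the $\mcp$-component of \emph{every} bracket — not only of two $\ggo$-elements, but also of brackets involving the newly adjoined $R_j$ — stays inside the single family $\{R_j\}$. This is exactly what the rigid two-term shape $m\,y^{m-1}+(m-2)\,y^{m+1}$ forced by relation (\ref{rapav}) secures: the average part of any bracket is pinned to the fixed ratio $m:(m-2)$, so it can never escape $\operatorname{span}\{R_j\}$. I expect no further difficulty beyond the routine degree bookkeeping indicated above.
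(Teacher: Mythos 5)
Your proof is correct and follows essentially the same route as the paper: the same splitting $\gh=\ggo\oplus(\gh\cap\mcp)$ obtained from Proposition \ref{pcont}, and the same key computation that relation (\ref{rapav}) pins the constant-coefficient (average) part of any bracket $\{H_{d,f},H_{k,g}\}$ to a multiple of $R_j$, $d+k=2j$. The only cosmetic differences are that you package the upper bound as a Lie-subalgebra closure argument for $\ggo\oplus\operatorname{span}\{R_j/\sqrt{1+y^2}\}$ where the paper instead projects a spanning set of $\gh$ (its Claim 1), and that your lower-bound witness is $d=0$, $k=2j$, $f=\cos s$, $g=\sin s$, where the paper takes $a=b'$ with $b$ an arbitrary non-constant zero-average function.
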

\begin{proof} The direct sum $\oplus_{j\geq0}\La_j\supset\gh$ is the direct 
sum of the spaces $\ggo$ and $\mcp$, by definition. 
This together with Proposition \ref{pcont} 
implies that $\gh$ is the direct sum of the subspace $\ggo$ and a subspace  
$\Psi\subset\mcp$.  Let us describe the subspace $\Psi$. To do thus, consider the projection 
$$\pi_{odd>1}:\oplus_{j\geq0}\La_j\to\oplus_{j\in2\zz_{\geq1}+1}\La_j.$$

{\bf Claim 1.} {\it The projection $\pi_{odd>1}\gh$ lies in $\gh$. It is spanned as a vector space 
over $\rr$ by the subspace $\La_{3,0}$  and some 
Poisson  brackets $\{ H_{d,a(s)}, H_{k,b(s)}\}$ with $d+k\geq4$ being even.  All the above brackets  
with all the functions $a(s)$, $b(s)$ with zero average lie in $\pi_{odd>1}\gh$.} 
\medskip

\begin{proof} The inclusion $\pi_{odd>1}\gh\subset\gh$ follows from the fact that $\pi_{odd>1}$ 
is the projection along the vector subspace 
$\La_1\oplus(\oplus_{j\in2\zz_{\geq0}}\La_j)\subset\ggo\subset\gh$. Each element of the Lie algebra 
$\gh$ is represented as a sum of a vector in $\La_0$ and a linear combination of Poisson brackets 
$\{ H_{d,a}, H_{k,b}\}$, by definition and (\ref{poissform}). The latter Poisson brackets lie in 
$\La_{d+k-1}\oplus\La_{d+k+1}$,  by (\ref{osum}), 
and thus, have components of the same parity $d+k+1(\mod 2)$. Note that if $d+k-1=1$, then 
the above bracket lies in $\La_1\oplus\La_{3,0}\subset\ggo\subset\gh$, by  (\ref{j=2}), and its 
$\pi_{odd>1}$-projection lies in $\La_{3,0}$. The two last statements together 
imply the second statement of the claim. 
If   $a(s)$ and $b(s)$ have zero average, then $H_{d,a}, 
H_{k,b}\in\ggo\subset\gh$, thus, $\{ H_{d,a}, H_{k,b}\}\in\gh$. Hence, the latter bracket lies in $\pi_{odd>1}\gh$, 
if $d+k$ is even and no less than 4. This  proves the claim.
\end{proof}

Taking projection $\pi_\Psi$ of a vector $w\in\gh$ to $\Psi$ consists of first taking its projection 
$$\pi_{odd>1}w=\sum_{j=1}^kH_{2j+1,f_j(s)}=\frac1{\sqrt{1+y^2}}\sum_{j=1}^kf_j(s)y^{2j+1}$$
and then replacing each $f_j(s)$ in the above right-hand side by its average $\widehat f_j$:  
$$\pi_\Psi w=\frac1{\sqrt{1+y^2}}P_w(y), \ P_w(y)=\sum_{j=1}^k\widehat f_jy^{2j+1}.$$
If $w=\{ H_{d,a},H_{k,b}\}$ with $d+k=2j\geq4$, then $P_w(y)=cR_j(y)$, see (\ref{rdy}), 
$c\in\rr$, which follows from 
   (\ref{rapav}). This together with Claim 1 implies that the vector space $\sqrt{1+y^2}\Psi$ 
   is contained in the vector space spanned by the polynomials $R_j$. Now it remains to prove 
   the converse:   each $R_j$ is contained in $\sqrt{1+y^2}\Psi$. To this end, we have to show that one can choose the 
   above functions $a$ and $b$ with zero average so that $P_w(y)\neq0$, i.e., so that 
   the above constant factor $c$ be non-zero. This statement is implied by 
   the second equality in (\ref{j=2}) and can be also proved directly as follows. 
   Let $b(s)$ be an arbitrary smooth non-constant function 
   on the circle with zero average. Set $a(s)=b'(s)$. Let $d+k=2j\geq4$. Then 
   $$w=\{ H_{d,a},H_{k,b}\}=\frac1{\sqrt{1+y^2}}(f_{j-1}(s)y^{2j-1}+f_j(s)y^{2j+1}),$$ 
    
$$f_{j-1}=2j(b')^2-k(b'b)',$$
 by (\ref{dkfg}). The function $f_{j-1}(s)$ has positive average, since so does its first term, while 
   its second term has zero average. Therefore, $P_w=cR_j$, $c>0$. Proposition 
\ref{prepsum} is proved.
\end{proof}

\begin{lemma} \label{dergen} 
The vector subspace generated by the polynomials $R_j$ from 
(\ref{rdy}) coincides with the space of odd polynomials $P(y)$ with $P'(0)=0$ such that 
the corresponding polynomial $\wt P(x)=x^{-\frac12}P(x^{\frac12})$ has vanishing derivative 
at $-1$. 
\end{lemma}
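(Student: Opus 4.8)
The plan is to transport the whole statement through the substitution $P\mapsto\wt P$ of (\ref{wtp}) and reduce it to an elementary linear-algebra fact about the images of the generators $R_j$.

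First I would record how the map $P(y)\mapsto\wt P(x)=x^{-\frac12}P(x^{\frac12})$ acts. Writing an odd polynomial with $P'(0)=0$ in the form $P(y)=\sum_{j\geq1}a_jy^{2j+1}$ as in (\ref{podd}), one gets $\wt P(x)=\sum_{j\geq1}a_jx^j$; hence $P\mapsto\wt P$ is a linear isomorphism from the space of odd polynomials with $P'(0)=0$ onto the space of polynomials with vanishing constant term. Applying it to $R_j(y)=jy^{2j-1}+(j-1)y^{2j+1}$ from (\ref{rdy}) gives
\[\wt R_j(x)=jx^{j-1}+(j-1)x^j,\qquad j\geq2.\]
Under this isomorphism the lemma becomes the assertion that the polynomials $\wt R_j$, $j\geq2$, span exactly
\[W:=\{\,Q\in\rr[x]\ |\ Q(0)=0,\ Q'(-1)=0\,\}.\]

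Next I would check the inclusion $\operatorname{span}\{\wt R_j\}\subseteq W$ by direct differentiation. Since $\wt R_j'(x)=j(j-1)x^{j-2}(1+x)$, one has $\wt R_j'(-1)=0$, and $\wt R_j(0)=0$ because $j-1\geq1$; thus each $\wt R_j$ lies in $W$.

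For the reverse inclusion I would argue degree by degree by comparing dimensions. Fix $N\geq2$ and restrict to polynomials of degree at most $N$. Those with vanishing constant term form an $N$-dimensional space, and $Q\mapsto Q'(-1)$ is a nonzero linear functional on it (it does not vanish on $x$), so the subspace $W_{\leq N}$ it cuts out has dimension $N-1$. On the other hand the $N-1$ polynomials $\wt R_2,\dots,\wt R_N$ have pairwise distinct degrees $2,3,\dots,N$, hence are linearly independent, and by the previous step all lie in $W_{\leq N}$; comparing dimensions shows they form a basis of $W_{\leq N}$. Any $Q\in W$ has some finite degree $N$ and so lies in $W_{\leq N}=\operatorname{span}\{\wt R_2,\dots,\wt R_N\}$, which gives $W\subseteq\operatorname{span}\{\wt R_j\}$. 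Pulling both sides back through the isomorphism $P\leftrightarrow\wt P$ identifies $\operatorname{span}\{R_j\}$ with the space described in the lemma, since the condition $\wt P(0)=0$ is automatic for these $P$.

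The computation is short; the only points needing care are confirming that $Q'(-1)=0$ is a single nontrivial constraint, so that the dimension drops by exactly one, and the linear independence of the $\wt R_j$, which I would read off from their distinct degrees rather than from any explicit elimination. I expect no genuine obstacle beyond this bookkeeping.
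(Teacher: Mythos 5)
Your proof is correct and takes essentially the same route as the paper: the paper also verifies that the generators satisfy the single linear constraint (stated there as the coefficient equation (\ref{eqgen}), $\sum_j(-1)^jja_j=0$, which is exactly $\wt P'(-1)=0$) and then concludes by a degree-by-degree dimension count, identifying two hyperplanes --- precisely your comparison of $\operatorname{span}\{\wt R_2,\dots,\wt R_N\}$ with $W_{\leq N}$. The only cosmetic difference is that you transport everything to the $\wt P$-side at the outset and check $\wt R_j'(-1)=0$ by factoring $\wt R_j'(x)=j(j-1)x^{j-2}(1+x)$, whereas the paper works with the coefficient equation in the $y$-variable and converts it to the condition $\wt P'(-1)=0$ only at the very end.
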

As it is shown below, the lemma is implied by the  following proposition. 
\begin{proposition} An odd polynomial 
\begin{equation}P_k(y)=\sum_{j=1}^{k}a_jy^{2j+1}\label{pky}\end{equation}
 is a linear combination of polynomials $R_\ell(y)$, see (\ref{rdy}), if and only if 
 \begin{equation}\sum_{j=1}^{k}(-1)^jja_j=0.\label{eqgen}\end{equation}
 \end{proposition}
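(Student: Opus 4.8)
The plan is to reduce the claim to a finite-dimensional linear-algebra statement about the span of the polynomials $R_\ell$ inside the space
$$V_k:=\operatorname{span}\{y^3,y^5,\dots,y^{2k+1}\},$$
which I identify with $\rr^k$ via the coefficient vector $(a_1,\dots,a_k)$ of $P_k$ in (\ref{pky}). Note that $P_k\in V_k$ automatically, so the whole question is which elements of $V_k$ lie in the span of the $R_\ell$.

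First I would pin down which $R_\ell$ can occur. Since the top-degree term of $R_\ell$ is $(\ell-1)y^{2\ell+1}$, see (\ref{rdy}), and these leading degrees are pairwise distinct, the highest surviving leading term in a finite combination $\sum_{\ell\ge2}c_\ell R_\ell$ cannot cancel; hence such a combination lies in $V_k$ only if $c_\ell=0$ for all $\ell>k$. As each $R_\ell$ with $2\le\ell\le k$ already lies in $V_k$, and these are linearly independent (again by distinct leading degrees), the intersection of $V_k$ with the full span of the $R_\ell$ equals $W_k:=\operatorname{span}\{R_2,\dots,R_k\}$, a subspace of $V_k$ of dimension exactly $k-1$. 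Thus $P_k$ is a linear combination of the $R_\ell$ if and only if $P_k\in W_k$.

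Next I would introduce the linear functional $\phi(a_1,\dots,a_k):=\sum_{j=1}^k(-1)^jja_j$, whose kernel $H$ is precisely the hyperplane cut out by (\ref{eqgen}); since $\phi$ is nonzero (its coefficient at $a_1$ is $-1$), one has $\dim H=k-1$. The necessity direction is then immediate from checking the generators: writing $R_\ell$ in the form (\ref{pky}) gives $a_{\ell-1}=\ell$, $a_\ell=\ell-1$, and all other coefficients zero, so
$$\phi(R_\ell)=(-1)^{\ell-1}(\ell-1)\ell+(-1)^\ell\ell(\ell-1)=\ell(\ell-1)\bigl((-1)^{\ell-1}+(-1)^\ell\bigr)=0.$$
By linearity $W_k\subseteq H$, which proves that (\ref{eqgen}) is necessary.

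Finally, for sufficiency I would simply compare dimensions: $W_k\subseteq H$ with $\dim W_k=\dim H=k-1$, whence $W_k=H$, so every $P_k$ satisfying (\ref{eqgen}) lies in $W_k$ and is therefore a combination of the $R_\ell$. I expect the only genuinely delicate points to be the two dimension counts — the linear independence of $R_2,\dots,R_k$ and the fact that $\phi\not\equiv0$ — both of which follow at once from inspecting leading or lowest terms; everything else is bookkeeping. An alternative to the dimension argument is to match coefficients directly, obtaining the triangular recursion $a_j=(j-1)c_j+(j+1)c_{j+1}$ with $c_1=c_{k+1}=0$, solving it successively for $c_2,\dots,c_k$ and recognizing (\ref{eqgen}) as the single compatibility condition of the last equation; but the dimension count is shorter and less error-prone.
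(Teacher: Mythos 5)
Your proposal is correct and follows essentially the same route as the paper: check that the generators $R_\ell$ satisfy (\ref{eqgen}), observe that both the span of the relevant $R_\ell$ and the solution set of (\ref{eqgen}) are hyperplanes in the same finite-dimensional space, and conclude they coincide. You additionally make explicit two points the paper treats as obvious — the linear independence of $R_2,\dots,R_k$ and the fact that a combination involving $R_\ell$ with $\ell>k$ cannot land in $V_k$ (both via leading terms) — which is a sound tightening rather than a different argument.
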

 \begin{proof} The polynomials (\ref{rdy}) obviously satisfy (\ref{eqgen}). In 
 the space of odd polynomials 
 $P(y)$ with $P'(0)=0$ of any given degree $d\geq5$ equation (\ref{eqgen}) defines a hyperplane. 
 The polynomials (\ref{rdy}) of degree no greater than $d$ also generate a 
 hyperplane there. Hence, these two hyperplanes coincide. The proposition is proved.
  \end{proof}
  
 \begin{proof} {\bf of Lemma \ref{dergen}.} For every odd polynomial $P(y)$ as in 
 (\ref{pky}) one has 
 $$\wt P(x)=\sum_{j=1}^ka_jx^j.$$
 Hence, equation (\ref{eqgen}) is equivalent to the equation $\wt P'(-1)=0$. 
 Lemma \ref{dergen} is proved.
 \end{proof} 
 
 \begin{proof} {\bf of Theorem \ref{tlidf3}.} Theorem \ref{tlidf3}  follows from 
 Proposition \ref{prepsum} and Lemma \ref{dergen}.
 \end{proof}
 
 \subsection{Proof of Theorem \ref{tlidf} for closed curve}
Theorem \ref{tlidf} is implied by Theorem \ref{tlidf3} proved above and the following lemma. 
 \begin{lemma} \label{lemden} The Lie algebra $\ggg$, see (\ref{algg}), is $C^{\infty}$-dense in the space of 
 smooth functions on $T_{<1}\gamma\simeq\gamma_s\times\rr_y$. 
 \end{lemma}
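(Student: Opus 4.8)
The plan is to deduce the lemma from two facts: that the full graded sum $\oplus_{k\geq0}\La_k$ is already $C^{\infty}$-dense, and that $\ggg$, although of codimension one inside $\oplus_{k\geq0}\La_k$, has $C^{\infty}$-closure containing that one missing direction. First I would record that $\oplus_{k\geq0}\La_k$ is $C^{\infty}$-dense in $C^{\infty}(T_{<1}\gamma)$ by the same Weierstrass argument as in the interval case (proof of Theorem \ref{tlidf} for an interval), the only difference being that here no approximation in the $s$-variable is needed: every element of $\La_k$ already carries an arbitrary smooth coefficient $h(s)\in C^{\infty}(S^1)$, so writing a target $G(s,y)$ as $\frac{1}{\sqrt{1+y^2}}\big(\sqrt{1+y^2}\,G\big)$ and approximating the smooth function $\sqrt{1+y^2}\,G$ on $S^1\times[-Y,Y]$ by polynomials in $y$ with $C^{\infty}(S^1)$-coefficients does the job, since multiplication by the fixed bounded factor $\frac{1}{\sqrt{1+y^2}}$ is continuous in the topology of uniform $C^{\infty}$-convergence on compacts. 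As the $C^{\infty}$-closure of a linear subspace is a linear subspace, it then suffices to prove the inclusion $\oplus_{k\geq0}\La_k\subset\overline{\ggg}$.

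Next I would use Proposition \ref{prepsum}, which supplies the tautological splitting $\oplus_{j\geq0}\La_j=\ggo\oplus\mcp$ together with $\ggg=\ggo\oplus\Psi$, where $\Psi\subset\mcp$ is cut out inside $\mcp$ by the single linear condition $\wt P'(-1)=0$, see (\ref{dpsi}). Thus $\ell\colon\mcp\to\rr$, $\ell\big(\tfrac{P(y)}{\sqrt{1+y^2}}\big):=\wt P'(-1)$, is a linear functional with $\Psi=\ker\ell$, and $\ggg$ has codimension exactly one in $\oplus_{j\geq0}\La_j$, a complementary line being spanned by $\xi_0:=\frac{y^3}{\sqrt{1+y^2}}\in\mcp$ (here $\wt P(x)=x$, so $\ell(\xi_0)=1$). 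Since $\overline{\ggg}$ is a subspace containing $\ggg$, the desired inclusion $\oplus_{j}\La_j=\ggg\oplus\rr\xi_0\subset\overline{\ggg}$ will follow once I show $\xi_0\in\overline{\ggg}$.

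The heart of the argument is then to exhibit a sequence $g_n\in\mcp$ with $\ell(g_n)=1$ for every $n$ but $g_n\to0$ in the $C^{\infty}$-topology on compacts of $\gamma_s\times\rr_y$; granting this, $\xi_0-g_n\in\ker\ell=\Psi\subset\ggg$ and $\xi_0-g_n\to\xi_0$, whence $\xi_0\in\overline{\ggg}$. I would take $g_n=\frac{y\,\wt P_n(y^2)}{\sqrt{1+y^2}}$ with $\wt P_n(x)=\beta_n\big[(x-r_n)^n-(-r_n)^n\big]$, where $r_n\to+\infty$ with $r_n=o(n)$ (e.g. $r_n=\sqrt n$) and $\beta_n=\frac{(-1)^{n-1}}{n(1+r_n)^{n-1}}$ is chosen so that $\wt P_n'(-1)=1$, i.e. $\ell(g_n)=1$; note $\wt P_n(0)=0$, so $\wt P_n$ has no constant term and $g_n$ indeed lies in $\mcp$. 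On a fixed interval $|y|\leq Y$ one has $x=y^2\in[0,Y^2]$, and once $r_n\geq Y^2$ the bounds $|\wt P_n^{(k)}(x)|\lesssim n^{k-1}r_n^{-k}(1+r_n)\big(\tfrac{r_n}{1+r_n}\big)^{n}$ exhibit geometric decay beating the polynomial prefactors for each fixed $k$; multiplying by the bounded smooth factor $\frac{y}{\sqrt{1+y^2}}$ and using the chain rule for $x=y^2$ yields $g_n\to0$ in $C^{\infty}$ on $|y|\leq Y$, for every $Y$.

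The point I expect to be the main obstacle, and the conceptual crux, is precisely that $\ell$ is discontinuous in the $C^{\infty}$-topology on compacts: it records the derivative $\wt P'(-1)$, i.e. data of $\frac{P(y)}{\sqrt{1+y^2}}$ at the non-real points $y=\pm i$ (where $\sqrt{1+y^2}$ vanishes), which is invisible to uniform approximation on real compact sets. This is exactly what makes the codimension-one subspace $\ggg$ dense, and it is realized by the family $g_n$, whose defect $\ell(g_n)\equiv1$ is held fixed while the functions die on every real compact set, possible precisely because the evaluation point $x=-1$ lies outside $[0,\infty)$. Verifying the claimed convergence uniformly over all compacts, which forces $r_n\to\infty$ since compact subsets of $T_{<1}\gamma$ correspond to arbitrarily large but finite $y$-ranges, is the only genuinely technical step.
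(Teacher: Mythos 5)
Your proof is correct, and its key step takes a genuinely different route from the paper's. Both arguments start identically: $C^{\infty}$-density of $\oplus_{k\geq0}\La_k$ via Weierstrass, and the splitting $\oplus_{k}\La_k=\ggo\oplus\mcp$, $\ggg=\ggo\oplus\Psi$ from Proposition \ref{prepsum} and Lemma \ref{dergen}, which reduce the lemma to showing that the single constraint $\wt P'(-1)=0$ is destroyed by passing to the $C^{\infty}$-closure. At that point the paper argues by direct approximation: given a constant-coefficient odd polynomial $Q$ with $Q'(0)=0$, it extends $\wt Q|_{\{x\geq0\}}$ to a smooth function on $[-1,+\infty)$ with vanishing derivative at $-1$, approximates the extension by polynomials on large segments, and restores the two constraints $\wt R(0)=0$, $\wt R'(-1)=0$ by adding small affine corrections $a_nx+b_n$. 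You instead make a functional-analytic reduction: $\Psi=\ker\ell$ for the linear functional $\ell\bigl(P/\sqrt{1+y^2}\bigr)=\wt P'(-1)$ on $\mcp$, so $\ggg$ has codimension one in $\oplus_k\La_k$ and it suffices to place the single complementary vector $\xi_0=y^3/\sqrt{1+y^2}$ in the closure; this you do with an explicit null sequence $g_n$ built from $(x-r_n)^n$, normalized so that $\ell(g_n)\equiv1$ while $g_n\to0$ in $C^{\infty}$ on compacts --- in effect proving that $\ell$ is discontinuous, hence has dense kernel. Your estimates check out: for $1\leq k\leq n$ the bound $n^{k-1}r_n^{-k}(1+r_n)\bigl(\tfrac{r_n}{1+r_n}\bigr)^{n}\approx\bigl(\tfrac{n}{r_n}\bigr)^{k-1}e^{-n/r_n}$ tends to $0$ for each fixed $k$ precisely when $r_n\to\infty$ and $r_n=o(n)$, so the choice $r_n=\sqrt n$ works, and the normalization $\beta_n=\tfrac{(-1)^{n-1}}{n(1+r_n)^{n-1}}$ indeed gives $\wt P_n'(-1)=1$ with $\wt P_n(0)=0$. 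What the paper's route buys is softness and brevity: no estimates at all, only Weierstrass on a larger interval plus an affine correction. What yours buys is an explicit quantitative witness and a transparent isolation of the mechanism --- the functional $\ell$ reads off data of $P(y)/\sqrt{1+y^2}$ at $y=\pm i$, which is invisible to uniform convergence on real compacts --- a point the paper's proof uses but leaves implicit.
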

 \begin{proof} Let us multiply each function from $\ggg$ by $\sqrt{1+y^2}$; 
 then each function becomes a polynomial in $y$ with coefficients being smooth 
 functions on a circle. All the polynomials in $y$ with coefficients as above 
 are $C^{\infty}$-dense in the space of functions in $(s,y)\in\gamma\times\rr$, by Weierstrass Theorem. 
  The polynomials realized by functions 
 from $\ggg$ in the above way are exactly the polynomials that are represented in unique way 
 as sums of at most four polynomials 
 of the following types:
 
 1) any polynomial of degree at most 2;
 
 2) any even polynomial containing only monomials of degree at least 4; 
 
 3) any odd polynomial of type $P(y;s)=\sum_{j=1}^ka_j(s)y^{2j+1}$ with 
 coefficients $a_j(s)$ being of zero average;
 
 4) any odd polynomial  of type $P(y)=\sum_{j=1}^kb_jy^{2j+1}$ with constant coefficients 
 $b_j$ and $\wt P'(-1)=0$.

 For the proof of Lemma \ref{lemden} it suffices to show that the odd 
 polynomials of type 4) are $C^{\infty}$-dense in the space of odd polynomials in $y$ with constant 
 coefficients and vanishing derivative at 0. 
 
Take an arbitrary odd polynomial of type $Q(y)=\sum_{j=1}^kb_jy^{2j+1}$. 
 The polynomial 
 $\wt Q(x):=x^{-\frac12}Q(x^{\frac12})=\sum_{j=1}^kb_jx^k$ 
 can be approximated in the  topology of uniform convergence with derivatives on segments 
 $[0,A]$ with $A$ arbitrarily large 
 by polynomials $\wt R(x)$ with $\wt R(0)=0$ and 
 $\wt R'(-1)=0$. Indeed, let us extend the 
 restriction $\wt Q|_{\{ x\geq0\}}$ to a $C^{\infty}$-smooth function on the semi-interval $[-1,+\infty)$ 
 with vanishing derivative  at $-1$. Thus extended function can be 
 approximated by polynomials  $\wt H_n(x)$. One can 
 normalize the above polynomials $\wt H_n$ to vanish at $0$ 
 and to  have zero  derivative at $-1$ by adding a small linear non-homogeneous function 
 $a_nx+b_n$. Then 
 the corresponding polynomials $H_n(y):=y\wt H_n(y^2)$ are of type 4) and approximate $Q(y)$. 
 This together with the above discussion proves Lemma \ref{lemden}. This finishes the proof of Theorem 
 \ref{tlidf} for curves.
 \end{proof}
 \subsection{Proof of Theorems \ref{tlidense}, \ref{tlidense2} and \ref{tlidense3} for $n=1$} 
 Theorems \ref{tlidense} and \ref{tlidense2} follow from Theorem \ref{tlidf}. 
 
 In the case, when $\gamma$ is a germ of curve (a local curve parametrized by an interval), 
 the bundle $T_{<1}\gamma$ is a contractible space identified with a rectangle in the coordinates $(s,w)$. 
 Therefore, each symplectic vector field on $T_{<1}\gamma$ is Hamiltonian. This together with 
 Theorem \ref{tlidense2} implies density of the Lie algebra generated by the fields $v_f$ in the Lie algebra  
 of symplectic vector fields. This proves Theorem \ref{tlidense3}.  
 
 \section{The Lie algebra in  higher dimensions. Proof of Proposition \ref{pcom} and Theorems \ref{tlidf1}, \ref{tlidf}, \ref{tlidense}}
 
 Here we consider the case, when $n\geq2$ (whenever the contrary is not specified). 
 In Subsection 3.1 we give a formula for Poisson brackets of functions $H_{k,\phi}$ with 
 $\phi\in S^k(\ttg)$ and prove Proposition \ref{pcom}. Then in Subsection 3.2 we find the Lie algebra generated by the 
 functions $H_f(x,w)$ and prove Theorem \ref{tlidf1}. In Subsection 3.3 
 we prove Theorems \ref{tlidf}, \ref{tlidense} and \ref{tlidense2}. 
 
 \subsection{Poisson brackets and their calculations for $n\geq1$. Proof of Proposition \ref{pcom}}
 The results of the present subsection are valid for all $n\in\nn$.

Let $\gamma$ be a $n$-dimensional Riemannian manifold, $x\in\gamma$. Fix some orthonormal 
coordinates $z=(z_1,\dots,z_n)$ on $T_x\gamma$. 
Recall that the {\it normal coordinates} centered at $x$ on a neighborhood $U=U(x)\subset\gamma$ 
is its parametrization by a neighborhood of the origin in 
$\rr^n_z\simeq T_x\gamma$ that is given 
by the exponential mapping: $\exp:T_x\gamma\to\gamma$. It is well-known that 
in thus constructed normal coordinates $z$ on $U$ the Riemannian metric has the same first jet at $x$, as the 
standard Euclidean metric $dz_1^2+\dots+dz_n^2$.  
\begin{proposition} \label{prosym} Fix arbitrary normal coordinates 
$(z_1,\dots,z_n)$ centered at $x_0\in\gamma$ on a neighborhood  $U=U(x_0)\subset\gamma$. 
For every $x\in U$ and every vector $w\in T_x\gamma$ let $(w_1,\dots,w_n)$ denote its components in the basis 
$\frac{\partial}{\partial z_j}$ in $T_x\gamma$. This yields global coordinates $(z_1,\dots,z_n; w_1,\dots,w_n)$ 
on $T\gamma|_U$. At the points of  the fiber $T_{x_0}\gamma\subset T\gamma$ the canonical symplectic form 
$\omega=d\alpha$ coincides with the standard symplectic form $\omega_e:=\sum_{j=1}^ndw_j\wedge dz_j$. 
\end{proposition}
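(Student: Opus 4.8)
The plan is to carry out the entire computation in the given coordinates $(z_1,\dots,z_n;w_1,\dots,w_n)$ and then reduce the statement to two standard properties of normal coordinates at their center. First I would write the Liouville form $\alpha$ explicitly. Writing the metric as $g=\sum_{i,j}g_{ij}(z)\,dz_i\,dz_j$ and decomposing an arbitrary tangent vector $v\in T_{(x,w)}(T\gamma)$ as $v=\sum_i a_i\frac{\partial}{\partial z_i}+\sum_i b_i\frac{\partial}{\partial w_i}$, so that $\pi_*(v)=\sum_i a_i\frac{\partial}{\partial z_i}$, the defining relation $\alpha(v)=\langle w,\pi_*(v)\rangle$ from (\ref{liof}) gives at once
\[
\alpha=\sum_{i,j}g_{ij}(z)\,w_i\,dz_j .
\]

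Next I would differentiate, using $d\bigl(g_{ij}(z)w_i\bigr)=\sum_k\frac{\partial g_{ij}}{\partial z_k}w_i\,dz_k+g_{ij}\,dw_i$, to obtain
\[
\omega=d\alpha=\sum_{i,j}g_{ij}(z)\,dw_i\wedge dz_j+\sum_{i,j,k}\frac{\partial g_{ij}}{\partial z_k}(z)\,w_i\,dz_k\wedge dz_j .
\]
Finally I would evaluate $\omega$ at a point $(x_0,w)$ of the fiber, that is, at $z=0$. Two facts about normal coordinates centered at $x_0$ do all the work: $g_{ij}(0)=\delta_{ij}$, and the first derivatives of the metric vanish at the center, $\frac{\partial g_{ij}}{\partial z_k}(0)=0$ (equivalently, the Christoffel symbols vanish at $x_0$), which is precisely the property recalled before the statement. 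The second fact annihilates the entire second sum, while the first turns the first sum into $\sum_i dw_i\wedge dz_i=\omega_e$. Hence $\omega$ agrees with $\omega_e$ at every point of the fiber $T_{x_0}\gamma$.

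There is essentially no hard step here; the proof is a direct computation. The only two points requiring care are, first, the bookkeeping in passing from the intrinsic relation $\alpha(v)=\langle w,\pi_*(v)\rangle$ to the coordinate $1$-form $\sum_{i,j}g_{ij}w_i\,dz_j$ (the metric is used to lower the index, so the canonical momenta are $p_j=\sum_i g_{ij}w_i$); and second, the observation that we are evaluating the $2$-form $\omega$ \emph{pointwise} along the fiber rather than pulling it back to it. The latter is what makes the metric-derivative term vanish for \emph{every} $w$: its coefficient $\frac{\partial g_{ij}}{\partial z_k}(0)\,w_i$ is zero independently of $w_i$ because the derivatives are frozen at $z=0$. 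I would stress that the statement is local to the single point $x_0$ and does not assert $\omega=\omega_e$ on a neighborhood; away from the fiber both the deviation of $g_{ij}$ from $\delta_{ij}$ and the first sum contribute, which is exactly why the normal-coordinate hypothesis is invoked only at the center.
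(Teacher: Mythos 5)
Your proof is correct and rests on exactly the same mechanism as the paper's: the metric in normal coordinates agrees with the Euclidean one to first order at $x_0$, so the Liouville form and its differential agree with their Euclidean counterparts at points of the fiber $T_{x_0}\gamma$. The paper merely compresses your explicit computation into a jet argument---it compares $\alpha$ with the Euclidean Liouville form $\alpha_e=\sum_j w_j\,dz_j$, notes their $1$-jets coincide along the fiber, and concludes $d\alpha=d\alpha_e=\omega_e$ there---whereas you write out $\alpha=\sum_{i,j}g_{ij}(z)w_i\,dz_j$ and differentiate; the content is identical.
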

\begin{proof} Let $\alpha$ and $\alpha_e:=\sum_{j=1}^nw_jdz_j$ denote the Liouville forms (\ref{liof}) 
on $T\gamma$ defined respectively by the metric under question and the standard 
Euclidean metric $dz_1^2+\dots+dz_n^2$. Their 1-jets  at each point $(x_0,w)$ of the fiber $T_{x_0}\gamma$ 
coincide, since both metrics have the same 1-jet at $x_0$. Therefore, 
$\omega=d\alpha=d\alpha_e=\omega_e$ on $T_{x_0}\gamma$.
\end{proof}
\begin{corollary} In the conditions of Proposition \ref{prosym} for every two smooth functions $F$ and $G$ 
on an open subset in $\pi^{-1}(U)\subset T\gamma$ 
their Poisson bracket at points of the fiber $T_{x_0}\gamma$ is equal to the standard Poisson bracket in the 
coordinates $(z,w)$: 
\begin{equation}\{ F,G\}=\frac{dF}{dw}\frac{dG}{dz}-\frac{dF}{dz}\frac{dG}{dw}  \ \ \ \text{ on } \  T_{x_0}\gamma.
\label{poissn}\end{equation}
\end{corollary}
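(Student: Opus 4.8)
The plan is to exploit the elementary but crucial fact that the Poisson bracket of two functions, \emph{evaluated at a single point} $p$, depends only on the value of the symplectic form $\omega$ at $p$ together with the differentials of the two functions at $p$, and \emph{not} on the derivatives of $\omega$. Granting this, the corollary is immediate from Proposition \ref{prosym}: since $\omega$ agrees with $\omega_e$ at every point of the fiber $T_{x_0}\gamma$, the bracket computed with $\omega$ and the bracket computed with $\omega_e$ must coincide on that fiber, and the bracket attached to $\omega_e=\sum_j dw_j\wedge dz_j$ is, by a direct computation, exactly the right-hand side of (\ref{poissn}).

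First I would recall the coordinate-free definition of the bracket: the Hamiltonian field $X_F$ of $F$ is determined by $\iota_{X_F}\omega=dF$, and $\{F,G\}=\omega(X_F,X_G)$. Fixing $p\in T_{x_0}\gamma$, the relation $\omega(p)(X_F(p),\cdot)=dF(p)$ is a \emph{linear} system for the vector $X_F(p)\in T_p(T\gamma)$ whose coefficients are the components of $\omega(p)$ alone; by nondegeneracy it is solved by $X_F(p)=\omega(p)^{-1}dF(p)$. Hence $\{F,G\}(p)=dF(p)(X_G(p))=dF(p)\bigl(\omega(p)^{-1}dG(p)\bigr)$ is an algebraic expression in $\omega(p)$, $dF(p)$ and $dG(p)$ only. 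This is the one point that truly needs to be noted; it is precisely what promotes Proposition \ref{prosym}, a statement about the \emph{value} of $\omega$ on $T_{x_0}\gamma$, into a statement about brackets there.

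Next I would invoke Proposition \ref{prosym}: since $\omega(p)=\omega_e(p)$ for every $p$ in $T_{x_0}\gamma$, the pointwise formula above gives $\{F,G\}_\omega(p)=\{F,G\}_{\omega_e}(p)$ on that fiber, where the subscript records which form is used. It then remains to identify the bracket of $\omega_e$ in the coordinates $(z,w)$. Writing $X_F=\sum_j\bigl(a_j\frac{\partial}{\partial z_j}+b_j\frac{\partial}{\partial w_j}\bigr)$ and contracting with $\omega_e=\sum_j dw_j\wedge dz_j$ yields $a_j=-\frac{\partial F}{\partial w_j}$ and $b_j=\frac{\partial F}{\partial z_j}$, whence $\{F,G\}_{\omega_e}=dF(X_G)=\sum_j\bigl(\frac{\partial F}{\partial w_j}\frac{\partial G}{\partial z_j}-\frac{\partial F}{\partial z_j}\frac{\partial G}{\partial w_j}\bigr)$, which is the asserted formula (\ref{poissn}).

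I do not anticipate a genuine obstacle: the entire content is the pointwise dependence of the bracket on the symplectic form, and once that is made explicit the corollary is a one-line consequence of Proposition \ref{prosym} together with the standard Darboux computation for $\omega_e$. The only subtlety worth flagging is that (\ref{poissn}) holds \emph{only} on the fiber $T_{x_0}\gamma$ and not on a neighborhood, since away from that fiber $\omega$ and $\omega_e$ need not agree; but as the corollary claims the identity exactly on $T_{x_0}\gamma$, this restriction causes no difficulty.
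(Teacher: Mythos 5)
Your proof is correct and matches the paper's (implicit) argument: the paper states this corollary without proof precisely because, as you explain, the bracket at a point is an algebraic expression in $\omega(p)$, $dF(p)$, $dG(p)$ alone, so Proposition \ref{prosym} (equality $\omega=\omega_e$ on the fiber) immediately yields equality of brackets there. Your explicit verification of the Darboux formula for $\omega_e$ and the sign bookkeeping are both consistent with formula (\ref{poissn}).
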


\begin{proof} {\bf of Proposition \ref{pcom}.} Fix a point $x_0\in\gamma$ and a system of normal coordinates 
$(z_1,\dots,z_n)$ centered at $x_0$. Let $(w_1,\dots,w_n)$ be the corresponding coordinates on tangent spaces 
to $\gamma$ introduced above. For every $f,g\in C^{\infty}(\gamma)$ one has 
$$\{ f(z)\sqrt{1-||w||^2}, g(z)\sqrt{1-||w||^2}\}|_{z=0}$$
\begin{equation}=-\sqrt{1-||w||^2}
\sum_j\left(f\frac{\partial g}{\partial z_j}-g\frac{\partial f}{\partial z_j}\right)\frac{w_j}{\sqrt{1-||w||^2}}\label{poissfo}
\end{equation}
$$=(gdf-fdg)(w)=(2gdf-d(gf))(w):$$
the derivative in $z$ of the function $\sqrt{1-||w||^2}$ vanishes at the points of the fiber $\{ z=0\}$, since 
the metric in question has the same first jet at $0$, as the Euclidean metric (normality of coordinates). 
Thus, the latter Poisson bracket is the function on $T_{<1}\gamma$ given by a 1-form. 

{\bf Claim 1.} {\it The Poisson  bracket (\ref{poissn}) 
of any two 1-forms $\eta_1(w)$, $\eta_2(w)$ considered as functions on $T\gamma$ is also a 1-form. The dual 
Poisson bracket on vector fields on $\gamma$ induced via the duality isomorphism $T^*\gamma\to T\gamma$ 
given by the metric is the usual Lie bracket.}

\begin{proof} The result of taking Poisson bracket (\ref{poissn}) of two 1-forms in normal chart at the 
fiber $T_{x_0}\gamma$ is obviously a linear functional of $w$. This is true for every $x_0\in\gamma$ and 
the corresponding normal chart. In more detail, let $\eta_i(w)=\sum_{j=1}^na_{ij}(z)w_j$. Then 
\begin{equation}\{\eta_1(w),\eta_2(w)\}|_{x=x_0}=\sum_{s=1}^n\left(a_{1s}(0)
\frac{\partial a_{2j}}{\partial z_s}(0)-a_{2s}(0)
\frac{\partial a_{1j}}{\partial z_s}(0)\right)w_j.\label{lieformyi}\end{equation}
Thus, the bracket is linear on fibers, and hence,  given by a 1-form. 
The fact that the duality given by the metric transforms the Poisson bracket on 1-forms to the Lie bracket 
on the vector fields follows from (\ref{lieformyi}) and coincidence of   1-jets 
of the given metric and the Euclidean metric at $x_0$.\end{proof}

Claim 1 already implies that the space of functions on $T_{<1}\gamma$ given by 1-forms is a Lie algebra 
dual to the usual Lie algebra of vector fields. 

Now it remains to show that each 1-form is a  
sum of at most $2n+1$ Poisson brackets (\ref{poissfo}). To do this, we use the next proposition.

\begin{proposition} \label{claim2} On every $C^{\infty}$-smooth $n$-dimensional 
manifold $\gamma$ 
each smooth 1-form $\eta$  can be represented as a finite sum $\sum_{\ell=1}^{2n} g_\ell df_\ell$, where $f_\ell$, $g_\ell$ are smooth functions on $\gamma$.
\end{proposition}

\begin{proof} Consider $\gamma$ as an embedded submanifold in 
$\rr^{2n}_{s_1,\dots,s_{2n}}$ (the Whitney Theorem). We would like to show that the form $\eta$ 
can be written 
as the restriction to $\gamma$ of some differential 1-form $\wt\eta$ on $\rr^{2n}$. To do this,  fix a tubular neighborhood $\Gamma^\delta\subset\rr^{2n}$ of the submanifold $\gamma$ given by the 
Tubular Neighborhood Theorem. Here $\delta=\delta(x)>0$ is a smooth function, 
\begin{equation}\Gamma^{\delta}:=\{ p\in\rr^{2n} \ | \ \dist(p,x)<\delta(x) \text{ for some } x\in\gamma\}.\label{tubn}
\end{equation}
 Each point $p\in\Gamma^{\delta}$ has the unique closest point $x=\pi_{\delta}(p)\in\gamma$, 
 and the projection $\pi_{\delta}:\Gamma^{\delta}\to\gamma$ is a submersion. 
  The projection $\pi_\delta$ allows to 
extend the form $\eta$ to the pullback form $\hat\eta:=\pi_{\delta}^*\eta$ on $\Gamma^\delta$, which coincides with 
$\eta$ on $\gamma$. Take now an arbitrary smooth bump function $\beta$ on $\Gamma^\delta$ that is identically 
equal to $1$ on $\gamma$ and vanishes on a neighborhood of the boundary $\partial\Gamma^\delta$. 
The 1-form $\wt\eta:=\beta\hat\eta$ extended by zero outside the tubular neighborhood $\Gamma^{\delta}$ is a global 
smooth 1-form on $\rr^{2d}$ whose restriction to $\gamma$ coincides with $\eta$. Taking its coordinate representation 
$\wt\eta=\sum_{\ell=1}^{2n}\wt\eta_\ell ds_\ell$ and putting $g_\ell=\wt\eta_\ell$, $f_\ell=s_\ell$ yields the statement 
of the proposition.
\end{proof}

Let now $\eta$ be an arbitrary smooth 1-form on $\gamma$. Let $f_\ell$, $g_\ell$ be the same, as in Proposition 
\ref{claim2}. 
Set $g_{2n+1}=1$, $f_{2n+1}=\sum_{\ell=1}^{2n}g_{\ell} f_{\ell}$. Then 
$$2\eta=2\sum_{\ell=1}^{2n}g_\ell d f_\ell=\sum_{\ell=1}^{2n}(2g_\ell df_\ell-d(g_{\ell}f_{\ell}))+(2g_{2n+1}df_{2n+1}-d(g_{2n+1}f_{2n+1})).$$
Therefore, $2\eta$, and hence $\eta$ can be presented as a sum of at most $2n+1$ Poisson brackets (\ref{poissfo}). 
This finished the proof of Proposition \ref{pcom}.
\end{proof}

For every $x\in\gamma$ let $\sym^k(T^*_x\gamma)$ denote the space of symmetric $k$-linear forms on $T_x\gamma$:  
each of them sends a collection of $k$ vectors in $T_x\gamma$ to a real number and is symmetric under permutations of 
vectors. The union of the latter spaces through all $x\in\gamma$ is a vector bundle of tensors. 
The space of its sections will be 
denoted  by $\sym^k(T^*\gamma)$. Similarly, by $S^k(T^*_x\gamma)$ we will denote the space of degree $k$ 
homogeneous polynomials as functions on the vector space $T_x\gamma$. The space $S^k(\ttg)$ introduced 
before is the space of sections of the vector bundle over $\gamma$ with fibers $S^k(T^*_x\gamma)$. 
It is well-known that the mapping $\chi_{sym}:\sym^k(T^*_x\gamma)\to S^k(T^*_x\gamma)$ sending a $k$-linear form $\Phi(y_1,\dots,y_k)$ to the polynomial 
 $\phi(y):=\Phi(y,\dots,y)$ is an isomorphism, which induces a section space isomorphism $\sym^k(\ttg)\to S^k(\ttg)$. The notion of 
 covariant derivative of a tensor bundle section $\Phi\in\sym^k(T^*\gamma)$  is well-known. 
 For every $x\in\gamma$, $\nu\in T_x\gamma$, $\phi\in S^k(\ttg)$  
 the {\it covariant derivative} $\nabla_\nu\phi\in S^k(T^*_x\gamma)$  along a vector $\nu\in T_x\gamma$ is 
 \begin{equation}\nabla_\nu\phi:=\chi_{sym}(\nabla_\nu(\chi_{sym}^{-1}\phi)).\label{chisym}\end{equation} 
\begin{remark} \label{rkrist} For every $\phi\in\skt$ one has 
\begin{equation} V_k(\phi)(y):=(\nabla_y\phi)(y)\in S^{k+1}(\ttg).\label{vkfy}\end{equation} 
 Let $z=(z_1,\dots,z_n)$ be normal coordinates on $\gamma$ centered at $x$,  and let 
$(w_1,\dots,w_n)$ be the corresponding coordinates on the fibers of the bundle $T\gamma$ in the basis $\frac{\partial}{\partial z_1},\dots,\frac{\partial}{\partial z_n}$. For every $\nu\in T_x\gamma$  and every $\phi\in S^k(\ttg)$ considered 
as a polynomial in $w$ with coefficients depending on $z$  
the covariant derivative (\ref{chisym}) is obtained from the polynomial $\phi$ by 
replacing its coefficients  by their derivatives along the vector $\nu$. This follows 
from definition, since in normal coordinates the Christoffel symbols vanish. 
\end{remark}
\begin{convention}  Everywhere below for every $\phi\in\skt$, $x\in\gamma$ and $\nu\in T_x\gamma$ 
by $\frac{d_y\phi(y)}{d\nu}$ we denote {\bf the derivative of the polynomial  $\phi(y)$} considered 
as a function of $y\in T_x\gamma$ 
along the vector $\nu$ (treating $\nu$ as a tangent vector to $T_x\gamma$ at $0$, identifying the vector space $T_x\gamma$ with its tangent spaces at 
all its points by translations). 
For every function $f\in C^{\infty}(\gamma)$ by $\frac{d_y\phi(y)}{d\nabla f}$ we 
denote the above derivative calculated for $\nu=\nabla f(x)$ at each point $x\in\gamma$. Note that the  
latter derivative lies in $S^{k-1}(\ttg)$, by definition.  
\end{convention}

\begin{proposition} \label{propoisf}  
 For every $k,m\in\zz_{\geq0}$ and every $\phi\in S^k(T^*\gamma)$, $f\in S^m(T^*\gamma)$
  the Poisson bracket of the corresponding 
functions $H_{k,\phi}$ and $H_{m,f}$ from (\ref{hkhf}) (see Convention \ref{conve}) lies in 
$\La_{k+m-1}\oplus\La_{k+m+1}$. In the case, 
when $m=0$, i.e., $f=f(x)$ is a function of $x$, one has 
\begin{equation}\{ H_{k,\phi},H_{0,f}\}=\frac{\frac{d_y\phi(y)}{d\nabla f}+(\nabla_y(f\phi))(y)+
(k-2)\phi(y)<y,\nabla f>}{\sqrt{1+||y||^2}}.\label{poishf}\end{equation}
Here $\nabla f$ is the gradient of the function $f(x)$ with respect to the metric of $\gamma$, and 
$\nabla_y(f\phi)$ is the above-defined covariant derivative of the form $f\phi\in\skt$ along the vector $y$, see (\ref{chisym}), (\ref{vkfy}). 
\end{proposition}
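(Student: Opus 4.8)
The plan is to compute the Poisson bracket directly in normal coordinates, relying on Proposition \ref{prosym} and its Corollary, which reduce the intrinsic bracket at a point $x_0$ to the standard flat bracket in coordinates $(z,w)$. First I would fix an arbitrary point $x_0\in\gamma$ and choose normal coordinates $(z_1,\dots,z_n)$ centered at $x_0$, with induced fiber coordinates $(w_1,\dots,w_n)$ on $T\gamma$. Since the bracket is pointwise and $x_0$ is arbitrary, it suffices to establish \eqref{poishf} along the fiber $T_{x_0}\gamma$, i.e.\ at $z=0$. Writing $H_{k,\phi}=\frac{\phi(w)}{\sqrt{1+\|w\|^2}}$ and $H_{0,f}=\frac{f(z)}{\sqrt{1+\|w\|^2}}$ (after passing to the $y$-variable via Convention \ref{conve}, where the metric-dependent identification of $w$ and $y$ is harmless at $z=0$ because the metric agrees with the Euclidean one to first order there), I would substitute into the flat formula \eqref{poissn}.

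The computation splits into the two terms of \eqref{poissn}. The term $\frac{\partial H_{k,\phi}}{\partial w}\cdot\frac{\partial H_{0,f}}{\partial z}$ produces the $z$-derivative of $f$, i.e.\ the gradient $\nabla f$, paired against the $w$-derivative of $H_{k,\phi}$; the latter derivative, as in the curve computation of Subsection 2.1, generates both a factor lowering the $y$-degree (yielding the $\La_{k+m-1}$ piece) and a factor raising it (yielding the $\La_{k+m+1}$ piece). The term $-\frac{\partial H_{k,\phi}}{\partial z}\cdot\frac{\partial H_{0,f}}{\partial w}$ contributes the $z$-derivative of the coefficients of $\phi$, which by Remark \ref{rkrist} is precisely the covariant derivative $\nabla_y(f\phi)$ evaluated at $z=0$, since the Christoffel symbols vanish in normal coordinates. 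Tracking the $\sqrt{1+\|w\|^2}$ factors carefully and using the identity for $\partial_w\bigl(w^m/\sqrt{1+\|w\|^2}\bigr)$ that generalizes the one-dimensional case, I expect the three summands $\frac{d_y\phi(y)}{d\nabla f}$, $(\nabla_y(f\phi))(y)$, and $(k-2)\phi(y)\langle y,\nabla f\rangle$ to emerge. The general claim that $\{H_{k,\phi},H_{m,f}\}\in\La_{k+m-1}\oplus\La_{k+m+1}$ for arbitrary $m$ follows from the same degree bookkeeping: each $w$-differentiation of a degree-$k$ homogeneous numerator shifts the resulting $y$-degree by $\pm1$ relative to $k+m$.

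The main obstacle is bookkeeping rather than conceptual: I must verify that the derivative $\partial_{w}\bigl(\phi(w)/\sqrt{1+\|w\|^2}\bigr)$, when contracted against $\nabla f$ and re-expressed in $y$, separates cleanly into exactly the homogeneous pieces claimed, and in particular that the coefficient of the top-degree term is $(k-2)$ rather than some other constant. This requires the careful rewriting $\frac{y^m}{\sqrt{1+\|y\|^2}}=\frac{w^m}{(\sqrt{1-\|w\|^2})^{m-1}}$ from Subsection 2.1, now in several variables, and keeping the directional derivative along $\nabla f$ distinct from the covariant derivative along $y$. A secondary point to check is that the term $\frac{d_y\phi(y)}{d\nabla f}$, which is the derivative of the \emph{polynomial} $\phi$ in its $y$-argument along the fixed vector $\nabla f(x_0)$, is correctly distinguished from $\nabla_y(f\phi)$, which differentiates the $z$-dependence of the coefficients; conflating these is the likeliest source of error. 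Once the constant $(k-2)$ is confirmed at $z=0$ and $x_0$ is allowed to vary, \eqref{poishf} holds globally and the proposition is proved.
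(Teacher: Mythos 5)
Your proposal follows essentially the same route as the paper's proof: fix an arbitrary point, pass to normal coordinates, use Proposition \ref{prosym} and its Corollary to compute with the flat bracket (\ref{poissn}) along the fiber, rewrite the functions in the $w$-variable via $\frac{y^m}{\sqrt{1+\|y\|^2}}=\frac{w^m}{(\sqrt{1-\|w\|^2})^{m-1}}$, invoke Remark \ref{rkrist} and normality to identify coefficient-wise $z$-derivatives with covariant derivatives, and obtain the general $\La_{k+m-1}\oplus\La_{k+m+1}$ statement by degree bookkeeping --- all exactly as in the text. One small correction to your accounting: the term $-\partial_z H_{k,\phi}\cdot\partial_w H_{0,f}$ produces $f\,(\nabla_y\phi)(y)$, not the full $(\nabla_y(f\phi))(y)$; the latter is completed only by borrowing one copy of $\phi(y)\langle\nabla f,y\rangle$ from the other bracket term (Leibniz rule), which is precisely why the coefficient left over there is $(k-2)$ rather than $(k-1)$ --- consistent with the verification of that constant which you correctly flagged as the key point to check.
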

\begin{proof} Let us prove the first statement of the proposition. Fix an arbitrary $x_0\in\gamma$.  It suffices 
to show that the restriction to $T_{x_0}\gamma$ of the Poisson bracket is $(\sqrt{1+||y||^2})^{-1}$ times 
a sum of two homogeneous polynomials in $y$ of degrees $k+m-1$ and $k+m+1$. Let us pass back to 
the initial unit ball bundle $T_{<1}\gamma$. Fix some normal coordinates $(z_1,\dots,z_n)$ centered at $z_0$ 
and the corresponding coordinates $w=(w_1,\dots,w_n)$ on the fibers. One has 
\begin{equation}H_{k,\phi}=\frac{\phi(w)}{(\sqrt{1-||w||^2})^{k-1}};\label{hkphi}\end{equation}
here the squared norm $||w||^2$ is given by the metric and depends on the point $z\in\gamma$. 
This follows by definition and since 
\begin{equation} ||y||^2=\frac{||w||^2}{1-||w||^2}, \ ||y||^2+1=(1-||w||^2)^{-1}.
\label{wandy}\end{equation}
Let us calculate  the Poisson bracket $\{ H_{k,\phi},H_{m,f}\}$ at points in $T_{z_0}\gamma$ 
 by formula (\ref{poissn}). The partial derivative 
$\frac{\partial H_{k,\phi}}{dz_j}$ is $(\sqrt{1-||w||^2})^{-(k-1)}$ times a degree $k$ homogeneous polynomial in $w$  obtained from the  polynomial $\phi(w)$ by replacing its coefficients by their partial derivatives in $z_j$. 
This follows from definition and the fact that $\sqrt{1-||w||^2}$ has zero derivatives in $z_j$ at points of the fiber $T_{x_0}\gamma$. The latter fact follows from normality of the coordinates $z_j$. 
The partial derivative 
$\frac{\partial H_{k,\phi}}{\partial w_j}$ is a sum of a degree $k-1$ homogeneous polynomial in $w$ divided by 
$(\sqrt{1-||w||^2})^{k-1}$ and a degree $k+1$ homogeneous polynomial in $w$ divided by $(\sqrt{1-||w||^2})^{k+1}$ 
(Leibniz rule). Finally, the expression (\ref{poissn}) for the above Poisson bracket is a sum of 
terms of the two following types: 1) a degree $k+m-1$ homogeneous polynomial in $w$ divided by 
$(\sqrt{1-||w||^2})^{k+m-2}$; 2) a degree $k+m+1$ homogeneous polynomial in $w$ divided by 
$(\sqrt{1-||w||^2})^{k+m}$. The terms of types 1) and 2) are homogeneous polynomials in $y$ of degrees 
respectively $k+m\pm1$ divided by $\sqrt{1+||y||^2}$, by (\ref{wandy}). This implies the first statement of the 
proposition. 

Let now $f=f(x)$ be a function of $x\in\gamma$. Let us prove formula (\ref{poishf}). For $z=0$ (i.e., $x=x_0$) one has  
$$\frac{\partial H_{k,\phi}}{\partial w_j}=(\sqrt{1-||w||^2})^{1-k}\frac{\partial}{\partial w_j}\phi(w)+\frac{(k-1)\phi(w)w_j}{(\sqrt{1-||w||^2})^{k+1}},$$
$$\frac{\partial H_{0,f}}{\partial w_j}=-\frac{w_j}{\sqrt{1-||w||^2}}f, \ \ \frac{\partial H_{0,f}}{\partial z_j}=\sqrt{1-||w||^2}\frac{\partial f}{\partial z_j},$$
\begin{equation}\sum_{j=1}^n\frac{\partial H_{k,\phi}}{\partial w_j}\frac{\partial H_{0,f}}{\partial z_j}=
(\sqrt{1-||w||^2})^{2-k}\frac{d_w\phi(w)}{d\nabla f}+\frac{(k-1)\phi(w)<\nabla f,w>}{(\sqrt{1-||w||^2})^{k}},\label{hkphi}
\end{equation}
\begin{equation}
\sum_{j=1}^n\frac{\partial H_{k,\phi}}{\partial z_j}\frac{\partial H_{0,f}}{\partial w_j}=
-\frac f{(\sqrt{1-||w||^2})^k}\sum_{j=1}^nw_j\frac{\partial \phi(w)}{\partial z_j}=-\frac{(\nabla_w\phi)(w)f}{(\sqrt{1-||w||^2})^k},\label{nnabla}\end{equation}
which follows from Remark \ref{rkrist}. 
 Substituting formulas (\ref{hkphi}), (\ref{nnabla})  
 and the formula $<\nabla f,w>\phi(w)+(\nabla_w\phi)(w)f=(\nabla_w(f\phi))(w)$ to (\ref{poissn})  yields 
$$\{ H_{k,\phi}, H_{0,f}\}=\frac{\frac{d_w\phi(w)}{d\nabla f}}{(\sqrt{1-||w||^2})^{k-2}}$$
$$+
\frac{(k-2)\phi(w)<\nabla f,w>}{(\sqrt{1-||w||^2})^{k}}+\frac1{(\sqrt{1-||w||^2})^{k}}(\nabla_w(f\phi))(w),$$
which is equivalent to (\ref{poishf}). Proposition \ref{propoisf} is proved.
\end{proof}
\begin{remark} Formula (\ref{poishf}) remains valid for $n=1$ and yields 
 formula (\ref{poissform}) from Subsection 2.1 in the case, when $k=0$ in  (\ref{poissform}). 
  Indeed, in this case $\phi=h(s)y^d$, $f=f(s)$. The corresponding first term in the 
 numerator in (\ref{poishf}) is equal to $dhf'y^{d-1}$. The second term equals $(hf)'y^{d+1}$. The third term 
 equals $(d-2)hf'y^{k+1}$. Thus, the numerator is equal to $dhf'y^{d-1}+((d-1)hf'+fh')y^{d+1}$. 
 This together with (\ref{poishf}) yields (\ref{poissform}).
 \end{remark}

\subsection{The Lie algebra. Proof of Theorem \ref{tlidf1}}
\begin{proposition} \label{pcontin} The Lie algebra $\gh$ generated by functions $H_{k,\phi}$ for all $k\in\zz_{\geq0}$ 
and $\phi\in S^k(T^*\gamma)$ with respect to the Poisson bracket 
is contained  in $\La:=\oplus_{k\geq0}\La_k$, and the latter direct sum is a Lie algebra.
\end{proposition}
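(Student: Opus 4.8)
The plan is to prove the two assertions separately and in the natural order: first that $\La:=\oplus_{k\geq0}\La_k$ is closed under the Poisson bracket (hence a Lie subalgebra of $C^{\infty}(T_{<1}\gamma)$), and then to deduce the containment $\gh\subseteq\La$ as a formal consequence. The essential input is already available: Proposition \ref{propoisf} asserts that for homogeneous generators one has $\{H_{k,\phi},H_{m,\psi}\}\in\La_{k+m-1}\oplus\La_{k+m+1}$, with the convention $\La_{-1}=0$ taking care of the boundary case $k=m=0$ (for which formula (\ref{poishf}) shows the bracket is purely of degree $1$ in $y$, so no negative-index term actually appears).

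For the closure step, I would argue as follows. By definition every element of $\La$ is a \emph{finite} sum $\sum_k H_{k,\phi_k}$ with $\phi_k\in S^k(T^*\gamma)$. Since the Poisson bracket is $\rr$-bilinear, the bracket of two arbitrary elements of $\La$ expands into a finite $\rr$-linear combination of brackets of the homogeneous generators $\{H_{k,\phi},H_{m,\psi}\}$. Each of these lands in $\La_{k+m-1}\oplus\La_{k+m+1}\subseteq\La$ by Proposition \ref{propoisf}, so the whole combination lies in $\La$. Thus $\{\La,\La\}\subseteq\La$, and since the Poisson bracket is automatically antisymmetric and satisfies the Jacobi identity on $C^{\infty}$, the vector space $\La$ is a Lie subalgebra. (The sum $\oplus_k\La_k$ is genuinely direct because $H_{k,\phi}=\phi(y)/\sqrt{1+||y||^2}$ with $\phi$ homogeneous of degree $k$, and polynomials of pairwise distinct degrees remain linearly independent after division by the common factor $1/\sqrt{1+||y||^2}$.)

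For the containment, recall that $\gh$ is by definition the smallest Lie subalgebra of $C^{\infty}$ containing all the generators $H_{k,\phi}$. Each such generator lies in $\La_k\subseteq\La$, and the previous step shows $\La$ is itself a Lie subalgebra; therefore the minimal Lie subalgebra containing the generators is contained in $\La$, i.e.\ $\gh\subseteq\La$, as required.

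I do not expect a genuine obstacle here, since the analytic content is entirely absorbed into Proposition \ref{propoisf}. The only point that deserves care is conceptual rather than technical: $\oplus_k\La_k$ must be read as an \emph{algebraic} direct sum of finite sums, which is precisely what makes closure under the bracket immediate. Because each homogeneous bracket spreads into only the two adjacent graded pieces $\La_{k+m\pm1}$, no infinite ``tail'' in the grading is ever produced, so one never leaves the space of finite sums.
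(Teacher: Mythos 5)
Your proposal is correct and is essentially the paper's own argument: the paper proves Proposition \ref{pcontin} in one line by citing Proposition \ref{propoisf}, and your write-up simply makes explicit the routine steps (bilinearity of the bracket, finiteness of sums in the algebraic direct sum, and minimality of the generated Lie algebra) that the paper leaves implicit.
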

Proposition \ref{pcontin} follows from Proposition \ref{propoisf}. 

In what follows by $\pi_k$ we denote the projection $\pi_k:\La\to\La_k$. 

We will deal with the linear operators $V_k$ given by (\ref{vkfy}): 
$$V_k:S^k(T^*\gamma)\mapsto S^{k+1}(T^*\gamma), \ \ \phi(y)\mapsto(\nabla_y\phi)(y).$$
\begin{proposition} \label{pikp1} One has 
\begin{equation}\pi_{k+1}\{ \La_k,\La_0\}=\begin{cases}\La_{k+1} \text { for } k\neq2,\\
\frac1{\sqrt{1+||y||^2}}V_2(S^2(T^*\gamma)) \text{ for } k=2.\end{cases}\label{pik+1}\end{equation}
\end{proposition}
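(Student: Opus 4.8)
The plan is to start from the explicit bracket formula (\ref{poishf}) of Proposition \ref{propoisf}. Since $\{H_{k,\phi},H_{0,f}\}\in\La_{k-1}\oplus\La_{k+1}$, its $\pi_{k+1}$-component equals $(1+\|y\|^2)^{-1/2}$ times the degree $k+1$ part of the numerator in (\ref{poishf}), namely
$$N(\phi,f):=(\nabla_y(f\phi))(y)+(k-2)\phi(y)\langle y,\nabla f\rangle\in S^{k+1}(\ttg).$$
Using the Leibniz rule for the covariant derivative and $V_k(\psi)(y)=(\nabla_y\psi)(y)$ (see (\ref{vkfy})), one has $(\nabla_y(f\phi))(y)=V_k(f\phi)(y)=\langle\nabla f,y\rangle\phi(y)+fV_k(\phi)(y)$, whence
$$N(\phi,f)=(k-1)\langle\nabla f,y\rangle\phi(y)+fV_k(\phi)(y).$$
The whole problem then reduces to describing the $\rr$-span $\mcd:=\operatorname{span}_\rr\{N(\phi,f)\mid \phi\in\skt,\ f\in C^{\infty}(\gamma)\}$, since $\pi_{k+1}\{\La_k,\La_0\}=(1+\|y\|^2)^{-1/2}\mcd$.

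The case $k=2$ is immediate and explains its exceptional nature: the coefficient $k-2$ vanishes, so $N(\phi,f)=V_2(f\phi)$. As every element of $S^2(\ttg)$ has the form $f\phi$ (e.g. $f\equiv1$), linearity of $V_2$ gives $\mcd=V_2(S^2(\ttg))$, which is the second line of (\ref{pik+1}). For $k\neq2$ the key observation is the identity
$$N(\phi,g)-V_k(g\phi)=(k-2)\langle\nabla g,y\rangle\phi(y),$$
which follows from the two displays above together with $V_k(g\phi)=\langle\nabla g,y\rangle\phi+gV_k\phi$. Here $V_k(g\phi)=N(g\phi,1)\in\mcd$ (take $f\equiv1$), so both terms on the left lie in $\mcd$; dividing by $k-2$ yields $\langle\nabla g,y\rangle\phi\in\mcd$ for all $g\in C^{\infty}(\gamma)$ and all $\phi\in\skt$.

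To finish the case $k\neq2$ I would upgrade these exact-form products to a full spanning set of $S^{k+1}(\ttg)$. Note that $\mcd$ is closed under the substitution $\phi\mapsto h\phi$ (since $h\phi\in\skt$), so it contains every finite sum $\sum_\ell\langle g_\ell\,df_\ell,y\rangle\phi=\langle\eta,y\rangle\phi$; by Proposition \ref{claim2} every smooth $1$-form $\eta$ on $\gamma$ is of the form $\sum_\ell g_\ell\,df_\ell$, hence $\langle\eta,y\rangle\phi\in\mcd$ for every $1$-form $\eta$ and every $\phi\in\skt$. Finally, the fibrewise symmetric multiplication $T^*_x\gamma\otimes S^k(T^*_x\gamma)\to S^{k+1}(T^*_x\gamma)$ is surjective, so the induced map on global sections is onto; thus the products $\langle\eta,y\rangle\phi$ already span $S^{k+1}(\ttg)$, giving $\mcd=S^{k+1}(\ttg)$ and the first line of (\ref{pik+1}).

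I expect the only genuinely delicate step to be this last globalization: passing from the pointwise algebraic fact that (linear form)$\times$(degree $k$) surjects onto degree $k+1$ polynomials, and from exact $1$-forms to arbitrary ones, to a statement about global sections over $\gamma$. The clean route is exactly to invoke Proposition \ref{claim2} together with the closure of $\mcd$ under multiplication by smooth functions, which together reduce the whole spanning question to the fibrewise surjectivity. The vanishing of the factor $k-2$ is precisely what obstructs this argument at $k=2$ and forces the image there to be the strictly smaller space $V_2(S^2(\ttg))$.
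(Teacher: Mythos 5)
Your algebraic reduction is exactly the paper's: you extract the degree-$(k+1)$ part of (\ref{poishf}), observe that $f\equiv1$ (or $k=2$) reduces it to $V_k(f\phi)$, so that $\frac1{\sqrt{1+||y||^2}}V_k(S^k(\ttg))$ always lies in $\pi_{k+1}\{\La_k,\La_0\}$, and for $k\neq2$ you subtract $V_k(g\phi)=N(g\phi,1)$ to isolate the products $(k-2)\langle\nabla g,y\rangle\phi(y)$; this is precisely the paper's argument via its formula (\ref{hk0f}) and inclusion (\ref{inclbr}), and your treatment of the exceptional case $k=2$ agrees with the paper's.

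The gap is in the final spanning step, the one you yourself flag as delicate. After Proposition \ref{claim2} you have all products $\eta(y)\phi(y)$ (with $\eta$ a global $1$-form and $\phi\in\skt$) inside $\mcd$, and you conclude that they span $S^{k+1}(\ttg)$ because the fibrewise symmetric multiplication $T^*_x\gamma\otimes S^k(T^*_x\gamma)\to S^{k+1}(T^*_x\gamma)$ is onto, ``so the induced map on global sections is onto.'' That inference conflates two different maps. Fibrewise surjectivity does give (after splitting the bundle epimorphism by means of a bundle metric) that every $h\in S^{k+1}(\ttg)$ lifts to a global section of the tensor-product bundle $T^*\gamma\otimes S^k(T^*\gamma)$; but $\mcd$ only contains finite sums of products of \emph{global} sections, i.e.\ images of decomposable elements $\eta\otimes\phi$, and a section of a tensor-product bundle is not formally a finite sum of decomposable global sections. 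Closing this requires a finite family of global sections generating every fiber together with a partition-of-unity regrouping argument (this is what the paper proves separately as Proposition \ref{teogen}); the regrouping into finitely many terms matters because $\gamma$ may be non-compact, e.g.\ in the germ case, where a naive partition-of-unity decomposition produces infinite sums. The paper short-circuits all of this with Proposition \ref{pronaf}, which proves exactly the statement you need, and already for the exact-form products $\phi(y)\frac{df}{dy}$ that you obtained before invoking Proposition \ref{claim2}: embed $\gamma\subset\rr^{2n}$ by Whitney, extend $h\in S^{k+1}(\ttg)$ to a fibrewise-polynomial section over $\rr^{2n}$ via a tubular neighborhood and a bump function, decompose it there explicitly as $\sum_{j=1}^{2n}\phi_j(y)\,ds_j(y)$, and restrict to $T\gamma$. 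With such a lemma in place of your last paragraph (which also renders the detour through arbitrary $1$-forms unnecessary), your proof is complete and coincides with the paper's.
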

\begin{proof} The higher order part of a Poisson bracket $\{ H_{k,\phi}, H_{0,f}\}$ is equal to 
\begin{equation}
\{ H_{k,\phi}, H_{0,f}\}_{k+1}=\frac1{\sqrt{1+||y||^2}}((k-2)\phi(y)<\nabla f,y>+(\nabla_y(\phi f))(y)),\label{hk0f}
\end{equation}
by (\ref{poishf}). If either $f\equiv 1$, or $k=2$, then the above expression in the brackets is reduced to 
$(\nabla_y(\phi f))(y)=V_k(\phi f)(y)$. Moreover, if $f\equiv1$, then the whole numerator in (\ref{poishf}) is reduced to $V_k(\phi f)(y)$. 
Therefore, each element in $\frac1{\sqrt{1+||y||^2}}V_k(S^k(T\gamma))$ is realized by a Poisson bracket, 
and hence, 
\begin{equation} \frac1{\sqrt{1+||y||^2}}V_k(S^k(T\gamma))\subset\{\La_k,\La_0\}.\label{inclbr}\end{equation}  
In particular,  this implies the statement of the proposition for $k=2$. To treate the case, when $k\neq2$, we use 
the following proposition. 
\begin{proposition} \label{pronaf} For every $k\in\zz_{\geq1}$ the vector subspace in $S^{k+1}(T^*\gamma)$ 
 generated by all the products $\phi(y)<\nabla f,y>=\phi(y)\frac{df}{dy}$ with $\phi\in S^k(T^*\gamma)$ 
 and $f\in C^{\infty}(\gamma)$  coincides with all of $S^{k+1}(T^*\gamma)$. Moreover, each element in $S^{k+1}(\ttg)$
 can be represented as a sum of at most $2n$ products as above. 
 \end{proposition}
 \begin{proof} Consider $\gamma$ as an embedded submanifold in 
$\rr^{2n}_{s_1,\dots,s_{2n}}$ (the Whitney Theorem); the embedding needs not be isometric. 

{\bf Claim 3.} {\it Each $\phi\in S^{k+1}(\ttg)$ is the restriction to $T\gamma$ of some $h\in S^{k+1}(T^*\rr^{2n})$.} 

\begin{proof} Let $\delta(x)>0$ be a smooth function on $\gamma$ with $\delta_{sup}:=\sup\delta\ll1$ that 
defines a tubular neighborhood $\Gamma^\delta\subset\rr^{2n}$ 
of the submanifold $\gamma$, see (\ref{tubn}). Let 
$\pi_{\delta}:\Gamma^{\delta}\to\gamma$ be the projection, which is a submersion. Let us extend $\phi$ to a form  $\wt\phi:=\pi_\delta^*\phi\in S^{k+1}(\Gamma^\delta)$ as the projection pullback. Let $\psi:\Gamma^\delta\to\rr$ 
be a bump function that is identically equal to 1 on a neighborhood of the submanifold $\gamma$ and 
vanishes on a neighborhood of the boundary $\partial\Gamma^\delta$. Then the form $h:=\psi\wt\phi$ extended 
as zero outside the tubular neighborhood $\Gamma^\delta$ becomes a well-defined form $h\in S^{k+1}(T^*\rr^{2n})$ 
whose restriction to $T\gamma$ coincides with $\phi$. 
\end{proof}

Let us consider the standard trivialization of the tangent bundle $T\rr^{2n}$ given by translations to $T_0\rr^{2n}$. 
Let $y=(y_1,\dots,y_{2n})$ be the corresponding coordinates on the fibers. 
Every $h\in S^{k+1}(T^*\rr^{2n})$ is a homogeneous polynomial in $y$ with coefficients being $C^{\infty}$-smooth functions 
on $\rr^{2n}$. Therefore, $h$ can be decomposed as 
$$h(y)=\sum_{j=1}^{2n}\phi_j(y)y_j=\sum_{j=1}^{2n}\phi_j(y)ds_j(y), \ \phi_j\in S^k(T^*\rr^{2n}).$$
The restriction of the latter decomposition to $T\gamma$ together with Claim 3 yield 
 the second (and hence, the first) statement of Proposition \ref{pronaf}.
 \end{proof}
The statement of Proposition \ref{pikp1} for $k\neq2$ follows from formula (\ref{hk0f}), statement (\ref{inclbr}) 
and Proposition \ref{pronaf}.
\end{proof}    
\begin{corollary} \label{coroif} The  sum $\La_0\oplus\La_1\oplus\La_2$ and the subspace  
$\frac1{\sqrt{1+||y||^2}}V_2(S^2(\ttg))$ in $\La_3$ are contained in the Lie algebra $\gh$. 
If $\La_3\subset\gh$, then $\gh=\oplus_{k\geq0}\La_k$. 
\end{corollary}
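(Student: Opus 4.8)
The plan is to derive the first statement by a short bootstrap from Proposition \ref{pikp1}, and the second by induction on the degree. First I would record the two structural facts on which everything rests: $\gh$ is a vector subspace of $\La:=\oplus_{k\geq0}\La_k$ (Proposition \ref{pcontin}), and every bracket $\{\La_k,\La_0\}$ lies in the two adjacent summands $\La_{k-1}\oplus\La_{k+1}$ (Proposition \ref{propoisf}, with $\La_{-1}=0$). The decisive observation is that whenever the lower summand $\La_{k-1}$ is already known to lie in $\gh$, one may subtract it off from any element of $\{\La_k,\La_0\}\subset\gh$ and thereby isolate the top component, so that $\pi_{k+1}\{\La_k,\La_0\}\subset\gh$.

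For the first statement I would apply this for $k=0,1,2$ in turn. Since $\La_0\subset\gh$ by definition of the generators, and $\{\La_0,\La_0\}\subset\La_1$ with $\pi_1\{\La_0,\La_0\}=\La_1$ (Proposition \ref{pikp1}, $k=0\ne2$), we get $\La_1=\{\La_0,\La_0\}\subset\gh$. Next $\{\La_1,\La_0\}\subset\La_0\oplus\La_2$; since $\La_0\subset\gh$, the isolation observation together with $\pi_2\{\La_1,\La_0\}=\La_2$ gives $\La_2\subset\gh$. Finally $\{\La_2,\La_0\}\subset\La_1\oplus\La_3$; since $\La_1\subset\gh$ and $\pi_3\{\La_2,\La_0\}=\frac1{\sqrt{1+||y||^2}}V_2(S^2(\ttg))$ (the $k=2$ case of Proposition \ref{pikp1}), the same argument places $\frac1{\sqrt{1+||y||^2}}V_2(S^2(\ttg))$ inside $\gh$. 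This proves the first statement.

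For the second statement it remains only to check $\La_k\subset\gh$ for all $k$, the reverse inclusion $\gh\subset\La$ being Proposition \ref{pcontin}. I have $\La_0,\La_1,\La_2\subset\gh$ from above and $\La_3\subset\gh$ by hypothesis. I would then induct: assuming $\La_{k-1},\La_k\subset\gh$ for some $k\geq3$, Proposition \ref{pikp1} (applicable since $k\ne2$) gives $\pi_{k+1}\{\La_k,\La_0\}=\La_{k+1}$, while $\{\La_k,\La_0\}\subset\La_{k-1}\oplus\La_{k+1}$; as $\La_{k-1}\subset\gh$, the isolation observation yields $\La_{k+1}\subset\gh$. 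Hence $\La_k\subset\gh$ for every $k$, that is, $\gh=\oplus_{k\geq0}\La_k$.

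There is no genuine obstacle internal to this corollary: it is a formal consequence of Proposition \ref{pikp1} and the two-term support of the brackets. The single nontrivial input it isolates, and the reason the second statement is only conditional, is the inclusion $\La_3\subset\gh$ itself. At $k=2$ the coefficient $(k-2)$ of the term $\phi(y)<\nabla f,y>$ in the bracket formula (\ref{hk0f}) vanishes, so the brackets $\{\La_2,\La_0\}$ reach only the image $V_2(S^2(\ttg))$ inside $\La_3$ rather than all of $S^3(\ttg)$. Upgrading this to the full $\La_3$ will require using brackets other than those with $\La_0$ — equivalently, exploiting the extra freedom in $f$ together with the structure of the operators $V_k$ — and that is the step to be carried out separately.
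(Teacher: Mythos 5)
Your proof is correct and takes essentially the same route as the paper: the paper's own two-sentence proof of this corollary invokes exactly the same ingredients, namely the definition of $\gh$ together with Proposition \ref{pikp1} for $k=0,1,2$ (first statement) and for $k\geq3$ (second statement), with your ``isolation'' step --- subtracting the lower component $\La_{k-1}\subset\gh$ from a bracket in $\La_{k-1}\oplus\La_{k+1}$ to capture the top projection --- left implicit there. Your explicit induction and your closing remark on why $\La_3\subset\gh$ is the genuinely separate input (the vanishing of the coefficient $k-2$ in (\ref{hk0f})) match the paper's subsequent treatment via Lemma \ref{pk=4} and Proposition \ref{la40}.
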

\begin{proof} The first statement of the corollary 
follows from definition and the statement of Proposition \ref{pikp1} for $k=0,1,2$. Its second 
statement follows from the statement of Proposition \ref{pikp1} for $k\geq3$. 
\end{proof} 

Now for the proof of  Theorems \ref{tlidf1} it suffices to show that $\La_3\subset\gh$. As it is shown 
below, this is implied by formula (\ref{poishf}) and the following lemma. 
   
   \begin{lemma} \label{pk=4} Let $n\geq2$. For every $k\in\nn$ consider two $\rr$-linear mappings 
   \begin{equation}
   G^+_k:S^k(\ttg)\otimes_\rr C^{\infty}(\gamma)\to S^{k+1}(\ttg), \ \phi\otimes_\rr f\mapsto \phi(y)\frac{df}{dy},
   \label{degk+}\end{equation}
   \begin{equation}G^-_k:S^k(\ttg)\otimes_\rr C^{\infty}(\gamma)\to S^{k-1}(\ttg), \  \phi\otimes_\rr f\mapsto \frac{d_y\phi(y)}{d\nabla f}.\label{degk-}\end{equation}
   (In (\ref{degk+}) for every $x\in\gamma$ and $y\in T_x\gamma$ the derivative 
   $\frac{df}{dy}=\frac{df}{dy}(x)$ means the derivative of the function $f$ at $x$ along the vector $y$.) 
   For every even $k$, e.g., $k=4$, one has $G^-_k(\ker G^+_k)=S^{k-1}(\ttg)$: more precisely, there exists a 
   $d_{k,n}\in\nn$, $d_{k,n}<(4n)^{k+1}$, such that for every 
   $h\in S^{k-1}(\ttg)$ there exists a  collection of $d_{k,n}$ pairs   
   $$ (\phi_j,f_j), \ \phi_j\in S^k(\ttg), \ f_j\in C^{\infty}(\gamma), \ j=1,\dots,d_{k,n},$$
    such that 
   \begin{equation} \sum_j\phi_j(y)\frac{df}{dy}\equiv 0, \ \ \sum_j\frac{d_y\phi_j(y)}{d\nabla f_j}=h.\label{phijfh}
   \end{equation}
   \end{lemma}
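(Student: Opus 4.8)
The plan is to reduce the statement to a local computation in normal coordinates and then exploit the abundance of functions on $\gamma$ to kill the $G^+_k$-image while steering the $G^-_k$-image. First I would fix, for each point $x_0$, normal coordinates $z=(z_1,\dots,z_n)$ centered at $x_0$, so that by Remark \ref{rkrist} both operators $G^+_k$ and $G^-_k$ act at the fiber $T_{x_0}\gamma$ purely by differentiating the polynomial coefficients: $\phi(y)\frac{df}{dy}=\phi(y)\sum_i \frac{\partial f}{\partial z_i}y_i$ and $\frac{d_y\phi(y)}{d\nabla f}=\sum_i\frac{\partial f}{\partial z_i}\frac{\partial \phi}{\partial y_i}(y)$. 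So the whole problem becomes the following purely algebraic question about homogeneous polynomials in $y\in\rr^n$ with coefficients that are (independent) functions of $z$: given $h(y)$ of degree $k-1$, find finitely many pairs $(\phi_j,f_j)$ with $\sum_j\phi_j(y)\,\langle\nabla f_j,y\rangle\equiv 0$ and $\sum_j\langle\nabla f_j,\partial_y\rangle\phi_j(y)=h(y)$.

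The key idea for producing elements of $\ker G^+_k$ is antisymmetrization. If $\psi\in S^{k-1}(\ttg)$ and $f,g\in C^\infty(\gamma)$, then the pair combination $\phi_1=\psi\,\langle\nabla g,y\rangle$ with multiplier $f$, against $\phi_2=\psi\,\langle\nabla f,y\rangle$ with multiplier $g$, gives
$\phi_1(y)\langle\nabla f,y\rangle-\phi_2(y)\langle\nabla g,y\rangle=0$,
so the $G^+_k$-contributions cancel identically. It remains to compute the corresponding $G^-_k$-contribution:
$\frac{d_y(\psi(y)\langle\nabla g,y\rangle)}{d\nabla f}-\frac{d_y(\psi(y)\langle\nabla f,y\rangle)}{d\nabla g}$.
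Applying the Leibniz rule, the terms where the derivation hits the linear factor $\langle\nabla g,y\rangle$ resp. $\langle\nabla f,y\rangle$ produce $\psi(y)\langle\nabla f,\nabla g\rangle$ twice with opposite sign and cancel; what survives is $\langle\nabla g,y\rangle\,\frac{d_y\psi}{d\nabla f}-\langle\nabla f,y\rangle\,\frac{d_y\psi}{d\nabla g}$, a degree $k-1$ form built as a "commutator" of the two directional derivatives of $\psi$ weighted by the transverse linear forms. I would then show that as $\psi$ ranges over $S^{k-1}(\ttg)$ and $f,g$ over $C^\infty(\gamma)$ (using enough coordinate functions of a Whitney embedding $\gamma\hookrightarrow\rr^{2n}$, exactly as in Proposition \ref{pronaf} and its Claim 3, to make $\nabla f$, $\nabla g$ point in arbitrary prescribed directions at $x_0$), these survivors span all of $S^{k-1}(\ttg)$. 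Concretely, taking $\psi$ a monomial $y^\alpha$ and $f=s_a$, $g=s_b$ coordinate functions, the survivor is $\alpha_a\,y_b\,y^{\alpha-e_a}-\alpha_b\,y_a\,y^{\alpha-e_b}$ (to first order at $x_0$), and a short linear-algebra argument — here is where the hypothesis $k$ even, hence $k-1$ odd, and $n\ge 2$ enters, mirroring the role of $R_j$ and condition (\ref{eqgen}) in the curve case — shows these differences already span $S^{k-1}$. The passage from "spanning at each fiber $T_{x_0}\gamma$" to a global statement over $\gamma$ is handled exactly as in Propositions \ref{claim2} and \ref{pronaf}: extend forms from $\gamma$ to a tubular neighborhood in $\rr^{2n}$, use a bump function, decompose in the global frame $ds_1,\dots,ds_{2n}$, and restrict back; this also supplies the explicit bound $d_{k,n}<(4n)^{k+1}$ by counting the monomials and coordinate pairs used.

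The main obstacle is the spanning claim for the survivor forms $\langle\nabla g,y\rangle\frac{d_y\psi}{d\nabla f}-\langle\nabla f,y\rangle\frac{d_y\psi}{d\nabla g}$: one must verify that the antisymmetrized combinations, which a priori could all lie in some proper subspace (indeed for $n=1$ they are forced into the codimension-one subspace cut out by the analogue of (\ref{eqgen}), which is precisely why the lemma excludes $n=1$), actually fill $S^{k-1}(\ttg)$ when $n\ge 2$. I expect to handle this by restricting to a two-dimensional coordinate plane $y_1,y_2$, reducing to an explicit $\mathrm{GL}_2$-type computation on binary forms of odd degree $k-1$, checking that the two "vector fields" $y_1\partial_{y_2}$ and $y_2\partial_{y_1}$ applied to monomials and antisymmetrized generate everything, and then using the freedom in the remaining $n-2$ directions plus the global embedding trick to assemble the full space. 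Once $G^-_k(\ker G^+_k)=S^{k-1}(\ttg)$ is established, combined with formula (\ref{poishf}) and Corollary \ref{coroif} it yields $\La_3\subset\gh$ (taking $k=4$), hence Theorem \ref{tlidf1}.
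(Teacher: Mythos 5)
Your construction is essentially the paper's. The antisymmetrized pairs $\psi\langle\nabla g,y\rangle\otimes f-\psi\langle\nabla f,y\rangle\otimes g$, specialized to $\psi=y^m$ and to coordinate functions $f=s_i$, $g=s_j$ of a Whitney embedding, are exactly (up to sign and a smooth factor) the kernel elements $Q_{m,i,j}$ of (\ref{qmij}); their $G^-_k$-images are the derivatives of the monomials $y^m$ along the rotation generators $v_{ij}=y_i\partial_{y_j}-y_j\partial_{y_i}$; and the globalization via tubular neighborhoods, bump functions and a partition of unity is the paper's Step 3 (Propositions \ref{progen} and \ref{teogen}, using that $G^-_k$ is $C^\infty(\gamma)$-linear in the first tensor factor). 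Your Leibniz cancellation is correct, and so is your identification of where the parity of $k$ enters: for $k$ even the target degree $k-1$ is odd, so every homogeneous polynomial of that degree has zero average on the unit sphere, and zero sphere-average is the only obstruction to being a sum of $v_{ij}$-derivatives.

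The one place where your proposal stops short is the spanning claim itself, and the sketch you give would not close it as stated. Restricting to a coordinate $2$-plane and doing the binary-form computation only produces derivatives of polynomials in the two chosen variables; a monomial such as $y_1y_2y_3^{k-3}$ is not reached by any single plane's computation, so ``using the freedom in the remaining $n-2$ directions to assemble the full space'' is precisely the nontrivial step. The paper supplies it as Lemma \ref{ppeucl}: an induction on the number of variables $N$, in which an arbitrary $P$ of zero sphere-average is split as $P=Q+R$ with $R$ the $\so(2)$-average of $P$ in $(y_1,y_2)$ (so that $Q$ is a $v_{12}$-derivative by the $N=2$ base case), and $R$ is then compared with the polynomial $\wt R$ obtained by replacing each factor $(y_1^2+y_2^2)^j$ by $c_j^{-1}y_2^{2j}$; the difference $R-\wt R$ is again a $v_{12}$-derivative, while $\wt R$, being a zero-average polynomial in $N-1$ variables, is handled by the induction hypothesis. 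If you add this induction (or an equivalent argument that the $v_{ij}$-derivatives exhaust the zero-mean polynomials on the sphere), your proof is complete; the rest of what you wrote, including the deduction of $\La_3\subset\gh$ from the case $k=4$ via (\ref{poishf}) and Corollary \ref{coroif}, matches the paper.
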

   
   Lemma \ref{pk=4} is proved below. In its proof we deal with $\gamma$ as an embedded submanifold in $\rr^{N}_{(s_1,\dots,s_{N})}$ (the Whitney Theorem, in which one can take $N=2n$), equipped 
   with its intrinsic Riemannian metric (not coinciding with the restriction to $T\gamma$ of the standard Euclidean metric). 
  Let us trivialize the tangent bundle $T\rr^{N}$ by translation to the origin
   and denote by $y=(y_1,\dots,y_{N})$ the corresponding 
   coordinates on the tangent spaces. 
Let $\mcp^k=\mcp^k(T^*\rr^{N})\subset S^k(T^*\rr^{N})$ be the subspace of degree $k$ homogeneous 
polynomials in $y$ with constant coefficients. 
Let $\mathbb L\subset C^{\infty}(\rr^{N})$ denote the $N$-dimensional vector subspace over $\rr$ 
generated by the coordinate functions $s_1,\dots,s_{N}$. 
\begin{remark} \label{rrestr} The operator $G^+_k$ given by formula (\ref{degk+}) extends as a well-defined linear operator 
$S^k(T^*\rr^N)\otimes_\rr C^{\infty}(\rr^N)\to S^{k+1}(T^*\rr^N)$ by 
the same formula, which will be also denoted by $G^+_k$. 
For every $h\in S^k(T^*\rr^N)\otimes_\rr C^{\infty}(\rr^N)$ the restriction $(G^+_kh)|_{T\gamma}$ coinsides with the image of the 
restriction $h|_{T\gamma}\in S^k(\ttg)\otimes_\rr C^{\infty}(\gamma)$ under the operator $G^+_k$ acting on $S^k(\ttg)\otimes_\rr C^{\infty}(\gamma)$. In particular, 
the restrictions to $T\gamma$ of elements of the kernel of the operator $G^+_k$ in the space $S^k(T^*\rr^N)\otimes_\rr C^{\infty}(\rr^N)$ are contained 
in the kernel of the operator $G^+_k$ acting on 
$S^k(\ttg)\otimes_\rr C^{\infty}(\gamma)$. 
\end{remark}

Step 1: finding basis of the kernel of $G^+_k$ in $\mcp^k\otimes\lll:=\mcp^k\otimes_\rr\lll$.
\begin{proposition} \label{ppbasis} 
For every $k\in\nn$  the kernel $\ker^k_{pol}$ of the restriction to $\mcp^k\otimes\mathbb L$ of the 
linear operator $G^+_k$ is the vector space with the basis 
\begin{equation}Q_{m,i,j}:=y^m(y_i\otimes s_{j}-y_{j}\otimes s_i), \ m=(m_1,\dots,m_{N})\in\zz_{\geq0}^{N}, 
\label{qmij}\end{equation} 
$$|m|:=\sum_\ell m_\ell=k-1, \ \  i,j=1,\dots,N, \ \ j>i,$$
  $$j\geq\max\ind(m):=\max\{ \ell \ | \ m_\ell>0\}.$$
\end{proposition}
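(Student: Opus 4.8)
The plan is to identify explicitly the linear operator $G^+_k$ restricted to the finite-dimensional space $\mcp^k\otimes\lll$ and then read off its kernel. Since $f\in\lll$ means $f$ is a linear combination of the coordinates $s_\ell$, and $df=\sum_\ell c_\ell\,ds_\ell$ when $f=\sum_\ell c_\ell s_\ell$, the derivative $\frac{df}{dy}$ is simply the linear form $\sum_\ell c_\ell y_\ell$ in the trivialized coordinates; in particular $\frac{ds_\ell}{dy}=y_\ell$. Hence on a generator $y^m\otimes s_j$ (with $|m|=k$) one has $G^+_k(y^m\otimes s_j)=y^m y_j=y^{m+e_j}$, an element of $\mcp^{k+1}$, where $e_j$ is the $j$-th standard basis vector of $\zz_{\geq0}^N$. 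So $G^+_k$ is, up to the obvious identifications, the map $\mcp^k\otimes\lll\to\mcp^{k+1}$ sending $y^m\otimes s_j\mapsto y^{m+e_j}$. Both source and target have explicit monomial bases, so the whole computation is combinatorial.

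Next I would compute the kernel. A general element of $\mcp^k\otimes\lll$ is $\sum_{|m|=k,\,1\le j\le N} a_{m,j}\,y^m\otimes s_j$, and it lies in $\ker G^+_k$ iff for every multi-index $\mu$ with $|\mu|=k+1$ one has $\sum_{j:\ \mu_j>0} a_{\mu-e_j,\,j}=0$. For a fixed $\mu$ the indices $j$ with $\mu_j>0$ label the monomials of $\mcp^k\otimes\lll$ that map to $y^\mu$; the relations for distinct $\mu$ involve disjoint sets of coefficients $a_{m,j}$, so the kernel decomposes as a direct sum over $\mu$ of the kernels of these elementary ``sum-to-zero'' relations. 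For a $\mu$ supported on $r$ distinct indices, the corresponding relation is a single linear equation in $r$ unknowns, contributing an $(r-1)$-dimensional space spanned by the differences $y^{\mu-e_i}\otimes s_i - y^{\mu-e_j}\otimes s_j$ for pairs $i<j$ in the support of $\mu$. Writing $\mu-e_i-e_j=:m$ (so $|m|=k-1$) gives exactly the vectors $Q_{m,i,j}=y^m(y_i\otimes s_j-y_j\otimes s_i)$ of (\ref{qmij}); to get a genuine \emph{basis} rather than a spanning set one picks, for each $\mu$, a spanning tree of these differences, and the standard choice is to always involve the largest index in the support, i.e. to require $j=\max(\text{supp}\,\mu)$. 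Since $\text{supp}\,\mu=\text{supp}(m)\cup\{i,j\}$ and $j>i$, the condition that $j$ be the top index of $\mu$ is precisely $j\ge\max\ind(m)$ together with $j>i$ — which is exactly the index range in the statement. The $Q_{m,i,j}$ in this range are linearly independent because for each $\mu$ they project to the standard tree basis of that $\mu$-block, and distinct $\mu$-blocks are independent.

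The only mildly delicate point is bookkeeping: checking that the map $(m,i,j)\mapsto \mu=m+e_i+e_j$ with the constraints $j>i$, $j\ge\max\ind(m)$ is a bijection onto (pairs consisting of $\mu$ with $|\mu|=k+1$ and a non-top index $i$ of $\mu$), so that the count of $Q_{m,i,j}$ matches $\sum_\mu(|\text{supp}\,\mu|-1)=\dim(\mcp^k\otimes\lll)-\dim G^+_k(\mcp^k\otimes\lll)$; once the map is seen to be a bijection the independence and spanning are immediate from the block decomposition above. I expect the main (though still routine) obstacle to be formulating this index bijection cleanly enough that the ``$j\ge\max\ind(m)$, $j>i$'' normalization is transparently the right one; everything else is linear algebra on monomials.
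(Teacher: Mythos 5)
Your proposal is correct and follows essentially the same route as the paper: both rest on the computation $G^+_k(y^m\otimes s_j)=y^{m}y_j$ (the paper's formula (\ref{gkjp})) and on grouping the generators by the target monomial $\mu=m+e_i+e_j$, the paper phrasing the resulting block structure as "normalize by subtracting $Q_{m,i,j}$'s so that $i\ge\max\ind(m)$, then show the remainder vanishes," while you phrase it as a direct sum of sum-to-zero hyperplanes with the star-tree basis at $j=\max\supp\mu$. Your index bijection $(m,i,j)\leftrightarrow(\mu,i)$ checks out, so no gap.
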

\begin{proof} It is obvious that $G^+_kQ_{m,i,j}=0$. Let us show that the elements 
$Q_{m,i,j}$ are linearly independent. Indeed, 
if there were a linear dependence, then there would be a linear dependence between 
some $Q_{m,i,j}$, for which the corresponding monomial $P:=y^my_iy_j$ is the same: 
a fixed monomial. All the $Q_{m,\ell,\mu}$ corresponding to $P$ have  $\mu=j$, by definition, while $\ell$ 
runs through those indices less than $j$, for which $P$ contains $y_\ell$. 
It is clear that these $Q_{m,\ell,j}$ are linearly independent, -- a contradiction.  

Fix an arbitrary $Q\in\ker^k_{pol}$: 
\begin{equation} Q=\sum_{j,m}c_{j,m}y^{m}\otimes s_j; \ |m|=k.\label{qlinc}\end{equation}
 Let us show that $Q$ is a linear combination of elements $Q_{m,i,j}$. 
 In the proof we use the formula
 \begin{equation}G^+_k(y^m\otimes s_j)=y_jy^m \text{ for every } m\in\zz_{\geq0}^N, \ |m|=k,\label{gkjp}\end{equation} 
 which follows from definition. 
Without loss of generality we can and will consider that $i\geq\ell_0:=\max\ind(m)$ for every $(i,m)$ with 
$c_{i,m}\neq0$. Indeed, if the latter inequality does not hold for some $(i,m)$, one can achieve 
it by  adding 
$$c_{i,m}y^my_{\ell_0}^{-1}(y_i\otimes s_{\ell_0}-y_{\ell_0}\otimes s_i)=c_{i,m}Q_{m',i,\ell_0}$$ 
 to $Q$. This operation  
kills the $(i,m)$-th term and replaces it by an $(\ell_0,m')$-th term. Here $m'$ is obtained from $m$ by replacing $m_{\ell_0}$, $m_i$ by $m_{\ell_0}-1$ and 
$m_i+1$ respectively. Let us show that $Q=0$. Indeed, 
$G^+_k(c_{i,m}y^m\otimes s_i)=c_{i,m}y_iy^m$ should cancel out with similar monomials  
coming from other $(j,\wt m)\neq(i,m)$, since $G^+_k Q=0$. 
Therefore, there exist $j\neq i$ and $\wt m\neq m$ for which 
$c_{j,\wt m}\neq0$ and 
$y_jy^{\wt m}=y_iy^m$. Hence, $y^m$ is divisible by $y_j$ and $j\leq\ell_0=\max\ind(m)$. 
On the other hand,  $j\geq\max\ind(\wt m)$, by the above assumption on all the monomials in $Q$, and $y^{\wt m}$ 
is divisible by $y_i$. Hence, $\ell_0\geq j>i\geq\ell_0$, since $j\neq i$,  a contradiction. Therefore, $Q=0$. 
The proposition is proved.
\end{proof} 

Step 2: proof of an Euclidean homogeneous version of Lemma \ref{pk=4}. In what follows by $\mcp^k_0\subset\mcp^k$ we denote the 
subspace of polynomials with zero average along the unit sphere. 
\begin{lemma} \label{ppeucl} Let $N\geq2$. Consider the mapping $g^-_k:\mcp^k\otimes\lll\to\mcp^{k-1}$ acting by the formula 
\begin{equation} g^-_k:P(y)\otimes f\mapsto\frac{dP(y)}{d\nabla f};\  f=\sum_{j=1}^Nc_js_j, \ c_j=const,
\ \nabla f=(c_1,\dots,c_N), \label{graeq}\end{equation}
i.e., the above gradient is taken with respect to the standard Euclidean metric. 
One has $g^-_k(\ker^k_{pol})=\mcp^{k-1}_0$. The latter image coincides with all of $\mcp^{k-1}$, if and only if $k$ is even.
\end{lemma}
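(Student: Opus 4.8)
The plan is to reinterpret $g^-_k$ on $\ker^k_{pol}$ as the divergence of polynomial vector fields tangent to the spheres, and then to identify its image using the representation theory of the rotation group. Writing a general element of $\mcp^k\otimes\lll$ as $\sum_{j=1}^N P_j\otimes s_j$ with $P_j\in\mcp^k$ and setting $P=(P_1,\dots,P_N)$, one checks directly from the definitions that $G^+_k(\sum_j P_j\otimes s_j)=\sum_j y_jP_j=\langle P,y\rangle$ and $g^-_k(\sum_j P_j\otimes s_j)=\sum_j\partial_{y_j}P_j=\operatorname{div}P$. Thus $\ker^k_{pol}$ is exactly the space of degree-$k$ polynomial fields tangent to the spheres $\{|y|=\mathrm{const}\}$, and $g^-_k$ is the divergence. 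In this picture the inclusion $g^-_k(\ker^k_{pol})\subseteq\mcp^{k-1}_0$ is immediate: by the divergence theorem on the unit ball, $\int_{S^{N-1}}(\operatorname{div}P)\,d\sigma$ is a nonzero multiple of the flux $\int_{S^{N-1}}\langle P,y\rangle\,d\sigma=0$.

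Concretely I would start from the basis $Q_{m,i,j}$ of $\ker^k_{pol}$ supplied by Proposition \ref{ppbasis}. A one-line differentiation gives
$$g^-_k(Q_{m,i,j})=\partial_{y_j}(y^m y_i)-\partial_{y_i}(y^m y_j)=L_{ij}(y^m),\qquad L_{ij}:=y_i\partial_{y_j}-y_j\partial_{y_i}.$$
Since for \emph{every} $i<j$ and $|m|=k-1$ the element $y^m(y_i\otimes s_j-y_j\otimes s_i)$ is visibly killed by $G^+_k$ and hence lies in $\ker^k_{pol}$, every $L_{ij}(y^m)$—and therefore every $L_{ij}(\phi)$ with $\phi\in\mcp^{k-1}$—belongs to $g^-_k(\ker^k_{pol})$. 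Consequently
$$g^-_k(\ker^k_{pol})=\sum_{i<j}L_{ij}(\mcp^{k-1}),$$
which is precisely the image of the infinitesimal $\mathfrak{so}(N)$-action on the space $\mcp^{k-1}$ of homogeneous polynomials of degree $k-1$.

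The heart of the argument is to identify this image, for which I would use complete reducibility of the compact connected group $\so(N)$ acting on the finite-dimensional space $\mcp^{k-1}$. On a nontrivial irreducible summand $W$ the subspace $\sum_{i<j}L_{ij}(W)\subseteq W$ is an $\so(N)$-submodule, hence equals $W$ (were it $0$, the connected group $\so(N)$ would act trivially on $W$); on the trivial summands the $L_{ij}$ vanish. This yields the splitting $\mcp^{k-1}=(\mcp^{k-1})^{\so(N)}\oplus\sum_{i<j}L_{ij}(\mcp^{k-1})$. Because $N\geq2$, the group $\so(N)$ acts transitively on spheres, so its polynomial invariants form $\rr[|y|^2]$ and $(\mcp^{k-1})^{\so(N)}$ equals $\rr\,|y|^{k-1}$ when $k-1$ is even and $\{0\}$ when $k-1$ is odd. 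Finally, the averaging projector $\Pi_0(\phi):=\int_{\so(N)}(g\cdot\phi)\,dg$ onto the invariants sends $\phi$ to the unique invariant polynomial with the same spherical average as $\phi$, i.e. to a fixed nonzero multiple of $\big(\int_{S^{N-1}}\phi\,d\sigma\big)\,|y|^{k-1}$ when $k-1$ is even and to $0$ when $k-1$ is odd. Hence the complement $\sum_{i<j}L_{ij}(\mcp^{k-1})=\ker\Pi_0$ is exactly $\mcp^{k-1}_0$, proving $g^-_k(\ker^k_{pol})=\mcp^{k-1}_0$. The dichotomy then follows at once: $\mcp^{k-1}_0=\mcp^{k-1}$ iff $(\mcp^{k-1})^{\so(N)}=\{0\}$, i.e. iff $k-1$ is odd, i.e. iff $k$ is even.

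The step I expect to be the main obstacle is the surjectivity onto all of $\mcp^{k-1}_0$ (the inclusion $\supseteq$), since this is where genuine input is required: one must know that the rotation operators $L_{ij}$ exhaust every nontrivial isotypic component while annihilating only the one-dimensional space $\rr|y|^{k-1}$ of invariants. A more computational alternative would decompose $\mcp^{k-1}=\bigoplus_a|y|^{2a}\mathcal H_{k-1-2a}$ into spherical harmonics and invoke irreducibility of $\mathcal H_e$ for $e\geq1$; this is clean for $N\geq3$ but forces a separate treatment of $N=2$ (where one diagonalizes $L_{12}$ in the basis $z^p\bar z^q$, $z=y_1+iy_2$). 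I prefer the compact-group averaging argument above, as it is uniform in $N\geq2$ and needs no case distinction.
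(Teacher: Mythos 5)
Your proof is correct, and after the common first step it takes a genuinely different route from the paper's. Both arguments begin identically: from the basis $Q_{m,i,j}$ of $\ker^k_{pol}$ one sees that $g^-_k(\ker^k_{pol})$ is exactly the span of the rotational derivatives $L_{ij}(y^m)=y_i\partial_{y_j}y^m-y_j\partial_{y_i}y^m$, i.e.\ the image of the infinitesimal $\so(N)$-action on $\mcp^{k-1}$ (this is the paper's Proposition \ref{pequivar}, Statement 3)), and the inclusion $g^-_k(\ker^k_{pol})\subset\mcp^{k-1}_0$ follows from an averaging-type argument (the paper averages along $\so(2)$-orbits, Proposition \ref{derao}; your divergence-theorem computation is an equally valid self-contained alternative, using that the integral of the homogeneous function $\operatorname{div}P$ over the ball is a positive multiple of its integral over the sphere). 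The divergence is in the surjectivity step, which is indeed the heart of the lemma. The paper proves $\mcp^{k-1}_0\subset g^-_k(\ker^k_{pol})$ by induction on the number of variables $N$: the base $N=2$ is handled with trigonometric polynomials, and the induction step splits a polynomial into its $\so(2)$-average in $(y_1,y_2)$ plus a zero-average part, replacing $(y_1^2+y_2^2)^j$ by $c_j^{-1}y_2^{2j}$ to drop to $N-1$ variables. You instead invoke complete reducibility of the compact group $\so(N)$ acting on $\mcp^{k-1}$: the image of the Lie-algebra action is the sum of the nontrivial isotypic components, hence the kernel of the averaging projector $\Pi_0$; the invariants are $\rr\,|y|^{k-1}$ for $k-1$ even and $0$ for $k-1$ odd by transitivity of $\so(N)$ on spheres; and since $\Pi_0$ preserves spherical averages, $\operatorname{Ker}\Pi_0=\mcp^{k-1}_0$ in both parities. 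Your route is shorter, uniform in $N\geq2$ (no separate $N=2$ case, no induction), and explains structurally why the answer is ``zero-average polynomials,'' at the cost of importing standard representation-theoretic machinery (isotypic decomposition, invariant theory of the orthogonal group); the paper's induction is longer but entirely elementary, needing nothing beyond Fourier series on a circle. Both proofs are complete and correct.
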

In the proof of Lemma \ref{ppeucl} we will use the following equivariance and invariance  properties of the 
operators $G^+_k$, $g^-_k$ and the
kernel $\ker^k_{pol}$  under the actions of $\gl_N(\rr)$ and $\oo(N)$ on $\mcp^k$ and $\mcp^k\otimes\lll$: \ \ 
$$H(P):=P\circ H, \ \ H(P\otimes s):=(P\circ H)\otimes(s\circ H)$$
$$\text{ for every } H\in\gl_N(\rr), \ P\in\mcp^k, \ s\in\lll.$$
\begin{proposition} \label{pequivar} 
1) The restriction of the mapping $G^+_k$ to $\mcp^k\otimes\lll$ is equivariant under the 
action of the linear group $\gl_N(\rr)$ on the image and the preimage: 
$$G^+_k(H(P\otimes s))(y)=H(Pds)(y)=P(Hy)ds(Hy)$$ 
for every $H\in\gl_N(\rr)$. In particular, the kernel $\ker^k_{pol}$ is $\gl_N(\rr)$-invariant. 

2) The mapping $g^-_k$ is equivariant under action of the orthogonal group $\oo(N)$: 
$$g^-_k(H(P\otimes s))=H((g^-_k)(P\otimes s))$$ 
for every $H\in\oo(N)$. 
In particular, the image $g^-_k(\ker^k_{pol})$ is $\oo(N)$-invariant.

3) The latter image is generated by derivatives of monomials $y^m$, $|m|=k-1$, along the 
vector fields $v_{ij}:=y_i\frac{\partial}{\partial y_j}-y_j\frac{\partial}{\partial y_i}$, $i\neq j$; each $v_{ij}$ 
generates the one-dimensional 
Lie algebra of the standard $\so(2)$-action on $\rr^N$ in the variables $(y_i,y_j)$.
\end{proposition}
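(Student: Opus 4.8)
The plan is to verify the three assertions by direct computation, treating each operator on the rank-one generators $P\otimes s$ and then extending by linearity. First I would observe that for a linear form $s=\sum_j c_js_j\in\lll$ the derivative $\frac{ds}{dy}$ equals the value $s(y)=\sum_j c_jy_j$ itself, so that $G^+_k(P\otimes s)(y)=P(y)s(y)$ is simply the pointwise product. Since the $\gl_N(\rr)$-action is precomposition with $H$, and precomposition commutes with pointwise multiplication, I obtain $G^+_k(H(P\otimes s))(y)=P(Hy)s(Hy)=(Ps)(Hy)=\bigl(H(G^+_k(P\otimes s))\bigr)(y)$, which is exactly the equivariance claimed in part 1. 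Because $\ker^k_{pol}$ is the kernel of a $\gl_N(\rr)$-equivariant map, it is automatically $\gl_N(\rr)$-invariant.

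For part 2 I would compute $g^-_k$ on the generators by the chain rule. Writing $\nabla s=c=(c_1,\dots,c_N)$, the transformed form $s\circ H$ has gradient $H^Tc$, while $P\circ H$ has gradient $\nabla(P\circ H)(y)=H^T(\nabla P)(Hy)$. Hence $g^-_k(H(P\otimes s))(y)=\langle H^Tc,\,H^T(\nabla P)(Hy)\rangle=\langle c,\,HH^T(\nabla P)(Hy)\rangle$. The decisive point is that for $H\in\oo(N)$ one has $HH^T=\mathrm{Id}$, so this collapses to $\langle c,(\nabla P)(Hy)\rangle=(g^-_k(P\otimes s))(Hy)=\bigl(H(g^-_k(P\otimes s))\bigr)(y)$, giving $\oo(N)$-equivariance. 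This is the one step that genuinely needs orthogonality: the directional derivative along $\nabla f$ is taken with respect to the Euclidean inner product, which only orthogonal transformations preserve, so the argument fails for a general $H\in\gl_N(\rr)$. Invariance of the image $g^-_k(\ker^k_{pol})$ then follows by combining $\oo(N)$-equivariance of $g^-_k$ with the $\oo(N)$-invariance of $\ker^k_{pol}$ inherited from part 1.

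For part 3 I would apply $g^-_k$ to the explicit basis $Q_{m,i,j}=y^m(y_i\otimes s_j-y_j\otimes s_i)$ of $\ker^k_{pol}$ furnished by Proposition \ref{ppbasis}. Since $\nabla s_j=e_j$ (the $j$-th coordinate vector), one has $g^-_k(y^my_i\otimes s_j)=\partial_{y_j}(y^my_i)$, and the two diagonal contributions $y^m\delta_{ij}$ cancel in the difference, leaving $g^-_k(Q_{m,i,j})=y_i\partial_{y_j}(y^m)-y_j\partial_{y_i}(y^m)=v_{ij}(y^m)$. As the $Q_{m,i,j}$ span $\ker^k_{pol}$, the image $g^-_k(\ker^k_{pol})$ is spanned by the elements $v_{ij}(y^m)$ with $|m|=k-1$; moreover every such element, even without the index restrictions of Proposition \ref{ppbasis}, is realized, since $y^m(y_i\otimes s_j-y_j\otimes s_i)$ always lies in $\ker G^+_k$. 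Recognizing $v_{ij}=y_i\partial_{y_j}-y_j\partial_{y_i}$ as the infinitesimal generator of the $\so(2)$-rotation in the $(y_i,y_j)$-plane completes part 3. I expect part 2 to be the only delicate point: the whole difficulty is in correctly tracking the gradient under $H$ and noticing that orthogonality is precisely what makes the two copies of $H^T$ cancel, whereas parts 1 and 3 are routine bookkeeping on the generators.
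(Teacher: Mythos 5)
Your proof is correct and follows the same route as the paper: the paper disposes of parts 1) and 2) with ``follow from definition'' (your computations, including the $HH^T=\mathrm{Id}$ cancellation that isolates where orthogonality is needed, are exactly the omitted details), and part 3) is the same evaluation of $g^-_k$ on the basis $Q_{m,i,j}$ from Proposition~\ref{ppbasis}, yielding $g^-_k(Q_{m,i,j})=v_{ij}(y^m)$ as in the paper's formula (\ref{gkqmi}). Your extra observation that every $v_{ij}(y^m)$, not only those from the restricted index set, lies in the image is a worthwhile clarification, since both inclusions are used later in the proof of Lemma~\ref{ppeucl}.
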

\begin{proof} Statements 1) and 2) of the proposition follow by definition. The  image $g^-_k(\ker^k_{pol})$ 
is generated by the polynomials 
\begin{equation} g^-_k(Q_{m,i,j})=g^-_k(y^m(y_i\otimes s_{j}-y_{j}\otimes s_i))=y_i\frac{\partial y^m}{\partial y_j}-
y_j\frac{\partial y^m}{\partial y_i}, \ \ |m|=k-1,\label{gkqmi}\end{equation}
by Proposition \ref{ppbasis}. The latter right-hand side is the derivative of the polynomial $y^m$ along the 
generator $v_{ij}$ of the $\so(2)$-action on the variables $(y_i,y_j)$. This proves Proposition \ref{pequivar}.
\end{proof}
\begin{proposition} \label{derao} The derivatives from Proposition \ref{pequivar}, Statement 3), lie in $\mcp^{k-1}_0$.
\end{proposition}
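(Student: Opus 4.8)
The plan is to use the fact, recorded in Statement 3) of Proposition \ref{pequivar}, that each generator of the image $g^-_k(\ker^k_{pol})$ is a derivative $v_{ij}(y^m)$ with $|m|=k-1$, where $v_{ij}=y_i\frac{\partial}{\partial y_j}-y_j\frac{\partial}{\partial y_i}$ is the infinitesimal generator of the standard $\so(2)$-rotation in the $(y_i,y_j)$-plane. Since $\mcp^{k-1}_0$ is a linear subspace, it suffices to show that each such generator has zero average over the unit sphere $S^{N-1}$. The conceptual input is that the rotations $R_t:=\exp(tv_{ij})\in\so(N)$ are isometries of $\rr^N$, hence preserve both $S^{N-1}$ and its standard rotation-invariant measure $d\sigma$.

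First I would write the derivative as the infinitesimal generator of the flow of $v_{ij}$: for the homogeneous polynomial $P=y^m$ one has $v_{ij}(P)(y)=\frac{d}{dt}\big|_{t=0}P(R_ty)$, which follows by the chain rule from $<\nabla P(y),v_{ij}(y)>=y_i\frac{\partial P}{\partial y_j}-y_j\frac{\partial P}{\partial y_i}$. Integrating over the sphere and differentiating under the integral sign (legitimate because $P(R_ty)$ is jointly smooth and $S^{N-1}$ is compact) gives
$$\int_{S^{N-1}}v_{ij}(P)\,d\sigma=\frac{d}{dt}\Big|_{t=0}\int_{S^{N-1}}P(R_ty)\,d\sigma(y).$$
The substitution $z=R_ty$ together with the $\so(N)$-invariance of $d\sigma$ shows that $\int_{S^{N-1}}P(R_ty)\,d\sigma(y)=\int_{S^{N-1}}P(z)\,d\sigma(z)$ does not depend on $t$, so the right-hand side vanishes. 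Hence $v_{ij}(y^m)\in\mcp^{k-1}_0$, and the proposition follows.

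An equivalent route avoids the group action: $v_{ij}$ is tangent to every sphere, since $<v_{ij}(y),y>=-y_jy_i+y_iy_j=0$, and divergence-free on $\rr^N$, so its restriction to $S^{N-1}$ is a divergence-free (Killing) field for the round metric. Then $v_{ij}(P)=\operatorname{div}_{S^{N-1}}(P\,v_{ij})$, and the divergence theorem on the closed manifold $S^{N-1}$ forces the integral to vanish. There is essentially no obstacle in either approach; the only point requiring care is the interchange of differentiation and integration, which is immediate here.
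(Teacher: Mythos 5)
Your proof is correct, and it takes a genuinely different route from the paper's. The paper argues orbit-wise: the derivative $\frac{dy^m}{dv_{ij}}$ has zero average along each $\so(2)$-orbit (just as the derivative of a function on a circle has zero average), and this is promoted to zero average over the whole sphere by disintegrating the spherical volume form as a product of the length elements of the $\so(2)$-orbits and a measure transversal to the orbit foliation. Your first argument globalizes this instead: writing $v_{ij}(y^m)=\frac{d}{dt}\big|_{t=0}(y^m\circ R_t)$ and differentiating the $t$-independent integral $\int_{S^{N-1}}(y^m\circ R_t)\,d\sigma$ under the integral sign replaces the paper's Fubini-type decomposition of the measure by a change of variables plus differentiation under the integral. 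This buys simplicity: invariance of $d\sigma$ under $\so(N)$ is immediate, whereas the product structure of the spherical volume form along the orbit foliation (which degenerates on the set $\{y_i=y_j=0\}$, where the orbits are points) needs at least a word of justification that the paper elides. Your second argument --- that $v_{ij}$ restricts to a Killing, hence divergence-free, field on the sphere, so $v_{ij}(P)=\operatorname{div}_{S^{N-1}}(P\,v_{ij})$ integrates to zero by the divergence theorem --- is a third correct route, slightly heavier in machinery but fully rigorous; your parenthetical ``(Killing)'' is the right justification for divergence-freeness of the restriction, since tangency plus divergence-freeness in $\rr^N$ alone is not a formal implication (here it does also follow from antisymmetry of the matrix generating $v_{ij}$). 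All three proofs exploit the same rotational symmetry; yours avoid any discussion of the orbit-space measure.
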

\begin{proof} Averaging the derivative $\frac{dy^m}{dv_{ij}}$ along the $\so(2)$-action in the variables 
$(y_i,y_j)$ yields zero, analogously to the well-known fact that the  derivative of a function on a circle 
has zero average. Every function on the unit sphere  in $\rr^N$ 
having zero average along the above $\so(2)$-action 
has zero average on the whole unit sphere as well, since the volume form of the sphere (foliated by $\so(2)$-orbits) is the product of the 
family of length elements of the $\so(2)$-orbits and a measure transversal to the foliation by $\so(2)$-orbits. 
This proves Proposition \ref{derao}.
\end{proof}

\begin{proof} {\bf of Lemma \ref{ppeucl}.}  One has $g^-_k(\ker^k_{pol})\subset\mcp^{k-1}_0$ 
(Proposition \ref{derao} and Proposition \ref{pequivar}, Statement 3)). For even $k$ one has $\mcp^{k-1}_0=\mcp^{k-1}$, since 
each odd degree homogeneous polynomial has zero average along the unit sphere: the antipodal 
map changes the sign of such a polynomial. 
Therefore, it suffices 
to prove that each polynomial in $\mcp^{k-1}_0$ lies in $g^-_k(\ker^k_{pol})$. We prove this statement by 
induction in the number $N$ of variables.

Induction base: $N=2$. A homogeneous polynomial in $(y_1,y_2)$ has zero average along the unit circle, 
if and only if it is the derivative of another homogeneous polynomial by the vector field $v_{12}$ generating 
the $\so(2)$-action. Indeed, writing the restriction to the unit circle of 
a homogeneous polynomial of degree $k-1$ as a trigonometric 
polynomial in $\phi=\arctan(\frac{y_2}{y_1})$ of the same degree reduces the above statement to the 
following well-known one: a trigonometric polynomial of a given degree has zero average, if and only if 
it is the derivative of another trigonometric polynomial of the same degree. Therefore, the space $\mcp^{k-1}_0$ 
of polynomials in $(y_1,y_2)$ coincides with $g^-_k(\ker^k_{pol})$, by Proposition \ref{pequivar} (Statement 3))
and Proposition \ref{derao}.

Induction step. Let the statement $\mcp^{k-1}_0=g^-_k(\ker^k_{pol})$ be proved for all $N\leq d$, $N\geq2$. 
Let us prove it for $N=d+1$. Fix an arbitrary $P\in\mcp^{k-1}_0$. It can be represented as the sum 
$$P=Q+R, \ Q \text{ has zero average along the } \so(2) \text{ action in } (y_1,y_2);$$
$$ \text{the polynomial } \ \  R \ \ \text{ is } \so(2)-\text{invariant.}$$
Namely, $R$ is the average of the polynomial $P$ under the above $\so(2)$-action. Let us show that 
$Q$, $R$, and hence, $P$ lie in  $g^-_k(\ker^k_{pol})$. 
The polynomials $Q$ and $R$ are homogeneous of the same degree $k-1$. The polynomial 
$Q$ is a derivative, as above, see the proof of the induction base. Therefore,  $Q\in g^-_k(\ker^k_{pol})$, 
by Proposition \ref{pequivar}, Statement 3). One has 
$$R(y)=\sum_{j=0}^{[\frac N2]} (y_1^2+y_2^2)^jR_j(y_3,\dots,y_N),$$
$$ R_j \text{ are homogeneous 
polynomials, } \deg R_j=N-2j,$$
by $\so(2)$-invariance. 
For every $j\in\nn$ the $\so(2)$-average of the monomial $y_2^{2j}$ is equal to $c_j(y_1^2+y_2^2)^j$, $c_j>0$. 
Set 
$$\wt R(y):=\sum_{j=0}^{[\frac N2]} c_j^{-1}y_2^{2j}R_j(y_3,\dots,y_N).$$
The difference $\wt R-R$ has zero average along the $\so(2)$-action, by construction. Therefore, 
it lies in  $g^-_k(\ker^k_{pol})$, see the above argument. Now it remains to show that $\wt R\in g^-_k(\ker^k_{pol})$. 
The polynomial $\wt R$ has zero average along the  unit sphere in $\rr^N$, by assumption. 
On the other hand,  it is a polynomial of $N-1$ variables $y_2,\dots,y_N$. Therefore, it has zero average along 
the $(N-2)$-dimensional unit sphere in $\rr^{N-1}_{y_2,\dots,y_N}$. Hence, it lies in the $g^-_k$-image of 
the kernel $\ker^k_{pol}$ from the space $\mcp^k\otimes\lll$ in variables $(y_2,\dots,y_N)$, by the induction hypothesis. The induction step is over. 
Lemma \ref{ppeucl} is proved.
\end{proof}

Step 3: proof of Lemma \ref{pk=4} in the general case. Recall that $S^k(\ttg)$ is the space of sections 
of a smooth  vector bundle $E_k$ whose fiber over each point $x\in \gamma$ is the 
space of degree $k$ homogeneous polynomials on the tangent space $T_x\gamma$. 
The bundle $E_k$ is isomorphic to the $k$-th symmetric power of the cotangent bundle $T^*\gamma$, see the discussion 
before Remark \ref{rkrist}. To prove Lemma \ref{pk=4}, we show that for every even $k$ 
 each section of the bundle $E_{k-1}$ is a finite linear combination of the restrictions to 
 $T\gamma$ of elements of the image $G^-_k(\ker^k_{pol})$ with $C^{\infty}$-smooth coefficients. To this end, we will show 
 that the latter image spans the space of degree $k-1$ homogeneous polynomials at each point $x\in \gamma$: see the 
 following definition. 
\begin{definition} Let $\gamma$ be a $C^{\infty}$-smooth manifold (not necessarily compact). 
Let $\pi:E\to \gamma$ be a $C^\infty$-smooth finite-dimensional vector bundle. 
A (finite or infinite) collection of its sections $(f_i)_{i\in I}$ is called {\it generating} (for the bundle $E$), if for every $x\in \gamma$ the vectors
$f_i(x)\in E(x)$ span the fiber $E(x)$ over $x$.
\end{definition}
\begin{proposition} \label{progen} Let $k$ be an even number. 
The sections from the image $G^-_k(\ker^k_{pol})$ generate the vector bundle 
$E_{k-1}$. 
\end{proposition}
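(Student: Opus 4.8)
The plan is to reduce the statement to the Euclidean fact already proved in Lemma~\ref{ppeucl}, using the same Whitney-embedding device that was set up just before Remark~\ref{rrestr}. Concretely, I would fix a Whitney embedding $\gamma\hookrightarrow\rr^N$ ($N=2n$), trivialize $T\rr^N$ by translations, and recall the subspace $\mcp^k\subset S^k(T^*\rr^N)$ of constant-coefficient homogeneous polynomials, the subspace $\lll\subset C^\infty(\rr^N)$ spanned by $s_1,\dots,s_N$, and the polynomial kernel $\ker^k_{pol}\subset\mcp^k\otimes\lll$ of $G^+_k$ from Proposition~\ref{ppbasis}. The content is purely pointwise: I must show that for each fixed $x\in\gamma$ the vectors $\{(G^-_k q)(x)\mid q\in\ker^k_{pol}\}$ span $S^{k-1}(T^*_x\gamma)$, the fiber of $E_{k-1}$ over $x$.

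First I would observe, via Remark~\ref{rrestr} and its obvious analogue for $G^-_k$, that for $q=\sum_{m,i,j}c_{m,i,j}Q_{m,i,j}\in\ker^k_{pol}$ the restriction $(G^-_k q)|_{T\gamma}$ is exactly the image under the intrinsic operator $G^-_k$ of the pair data restricted to $T\gamma$, because in normal coordinates the covariant derivative reduces to the ordinary derivative on the fibers (Remark~\ref{rkrist}) and the elements $Q_{m,i,j}$ involve only the linear functions $s_\ell$, whose gradients are the constant vectors $e_\ell$. So the pointwise span in question is the image, at $x$, of the \emph{Euclidean} operator $g^-_k$ applied to $\ker^k_{pol}$, intertwined with the surjection $T^*_x\rr^N\twoheadrightarrow T^*_x\gamma$ (dually, the injection $T_x\gamma\hookrightarrow T_x\rr^N$). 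Since $k$ is even, Lemma~\ref{ppeucl} gives $g^-_k(\ker^k_{pol})=\mcp^{k-1}=\mcp^{k-1}(T^*\rr^N)$, i.e.\ \emph{every} constant-coefficient homogeneous polynomial of degree $k-1$ on $\rr^N$ is realized. Restricting such polynomials from $T_x\rr^N$ to the subspace $T_x\gamma$ already yields all of $S^{k-1}(T^*_x\gamma)$ — any homogeneous polynomial on a subspace extends to one on the ambient space — so the fiber is spanned. Doing this at every $x$ shows the collection $G^-_k(\ker^k_{pol})$ is generating for $E_{k-1}$.

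The one genuine subtlety — the main obstacle — is bookkeeping the difference between the \emph{Euclidean} gradient appearing in $g^-_k$ (Lemma~\ref{ppeucl}, formula~(\ref{graeq})) and the \emph{Riemannian} gradient $\nabla f$ appearing in the definition~(\ref{degk-}) of the intrinsic $G^-_k$. These need not agree even for $f=s_\ell|_\gamma$, since the embedding is not isometric. I would handle this by working in normal coordinates $z$ centered at the chosen point $x_0$: there the metric has Euclidean $1$-jet, so the Christoffel symbols vanish at $x_0$ and the Riemannian gradient of any function at $x_0$ coincides with its coordinate gradient in the $z$-chart. Thus it suffices to produce, pointwise at $x_0$ and in the $z$-coordinates, the needed identities; the functions $f_j$ one feeds into the intrinsic $G^-_k$ are taken to be the $z$-coordinate functions (or fixed linear combinations thereof, cut off to be globally defined on $\gamma$ by a bump function supported near $x_0$), so that $\nabla f_j(x_0)$ is the prescribed constant vector. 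Because the $G^+_k$-relation $\sum_j\phi_j(y)\,df_j/dy\equiv0$ holds identically as a polynomial identity in $y$ (it is inherited from the Euclidean relation $G^+_k q=0$, which restricts to $T\gamma$ by Remark~\ref{rrestr}), it holds in particular at $x_0$; and by the quantitative count in Lemma~\ref{ppeucl}/Proposition~\ref{ppbasis} the number of pairs needed is bounded by $\dim\ker^k_{pol}<(4n)^{k+1}$, giving the bound $d_{k,n}<(4n)^{k+1}$ asserted in Lemma~\ref{pk=4}. Assembling: Proposition~\ref{progen} follows, and (after the partition-of-unity argument turning pointwise generation into a global finite linear combination with $C^\infty$ coefficients) so does Lemma~\ref{pk=4}.
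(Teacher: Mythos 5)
Your reduction to Lemma \ref{ppeucl} is the right instinct, but the way you pass from the Euclidean operator $g^-_k$ to the intrinsic $G^-_k$ breaks down, and the patch you propose proves a different statement. The central claim of your main line --- that the value at $x$ of $G^-_k(q|_{T\gamma})$ is the restriction to $T_x\gamma$ of the ambient polynomial $g^-_k(q)$, so that surjectivity onto $\mcp^{k-1}(T^*\rr^N)$ followed by ``restrict to the subspace'' gives the whole fiber $S^{k-1}(T^*_x\gamma)$ --- is false. For $q=\phi\otimes s_j$ one has $(g^-_k q)|_{T_x\gamma}=(\partial\phi/\partial y_j)|_{T_x\gamma}$, whereas $(G^-_k(q|_{T\gamma}))(x)$ is the fiber derivative of the \emph{already restricted} polynomial $\phi|_{T_x\gamma}$ along the Riemannian gradient $\nabla(s_j|_\gamma)(x)\in T_x\gamma$. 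Restriction does not commute with $\partial/\partial y_j$ (take $\phi=y_1^{k-1}y_j$ with $e_1\in T_x\gamma$ and $e_j$ Euclidean-orthogonal to $T_x\gamma$: the first expression is $y_1^{k-1}\neq0$, the second is $0$), and even for tangential directions the Riemannian gradient differs from $e_j$ by the inverse metric, since the Whitney embedding is not isometric. So the span of the values $(G^-_kq)(x)$ is not identified with the span of the restrictions $(g^-_kq)|_{T_x\gamma}$, and your conclusion does not follow.

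Your fallback --- normal coordinates $z$ on $\gamma$ and cut-off coordinate functions as the $f_j$ --- computes values of \emph{different} sections, not of $G^-_k(\ker^k_{pol})$, which is built from the fixed global coordinates $s_j$; moreover those sections depend on the base point, so you lose the finite global generating collection needed for Proposition \ref{teogen} and for the uniform bound $d_{k,n}$, and the cut-off destroys the kernel relation wherever the bump function is non-constant. The missing idea is the paper's: for each $x$, use the $\gl_N(\rr)$-invariance of $\ker^k_{pol}$ (Proposition \ref{pequivar}, statement 1)) to choose \emph{linear} ambient coordinates in which $T_x\gamma$ is the coordinate subspace $\rr^n_{s_1,\dots,s_n}$ and the Riemannian metric on $T_x\gamma$ coincides with the Euclidean one there. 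Then for kernel elements involving only $s_j$, $y_j$ with $j\leq n$ one gets $\nabla(s_j|_\gamma)(x)=e_j\in T_x\gamma$, the operators $G^-_k$ and $g^-_k$ literally coincide on the fiber over $x$, and Lemma \ref{ppeucl} applied in $n\ (\geq2)$ variables already yields all of $\mcp^{k-1}(\rr^n)=S^{k-1}(T^*_x\gamma)$; no restriction of ambient polynomials is involved.
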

\begin{proof} Fix an arbitrary point $x\in \gamma$. We consider that the 
origin in $\rr^N$ is distinct from $x$ and the line connecting $x$ with 
the origin is transversal 
to $T_x\gamma$. One can achieve this by translation. Let us choose a  
Euclidean scalar product on $\rr^N$ and orthonormal  coordinates $(s_1,\dots,s_N)$ centered at 0  so that 

- the vector subspace parallel to $T_x\gamma$ be the coordinate subspace $\rr^n_{s_1,\dots,s_n}$; 

- the translation pushforward of the  Riemannian metric on $T_x\gamma$ to $\rr^n_{s_1,\dots,s_n}$ be the 
standard Euclidean metric given by $ds_1^2+\dots+ds_n^2$; 

- the radius vector of the point $x$ be orthogonal to $T_x\gamma$. 

One can achieve this by applying a linear transformation and choosing appropriate scalar product. 
These operations  change neither the space $\mcp^k\otimes\lll$, nor $\ker^k_{pol}$ (Proposition \ref{pequivar}, 
Statement 1)). 
Let us equip the tangent spaces to $\rr^N$ with the coordinates $y_1,\dots,y_N$ obtained from $s_1,\dots,s_N$ 
by translations. This identifies the tangent subspace $T_x\gamma$ with  $\rr^n_{y_1,\dots,y_n}$. 
For every $Q\in\mcp^k\otimes\lll$ containing only $s_j$  and $y_j$ with $j\leq n$ the image $G^-_k(Q|_{T\gamma})$
 coincides on the fiber $T_x\gamma$ with the restriction to it of the form $g^-_k(Q)$, by construction. 
 The $g^-_k$-image of the subspace in
$\ker^k_{pol}$ consisting of  polynomials in $y_1,\dots,y_n$ 
coincides with the similar subspace in $\mcp^{k-1}$,  
i.e., with the fiber at $x$ of the bundle $E_{k-1}$, 
by Lemma \ref{ppeucl}. This proves 
Proposition \ref{progen}. 
\end{proof}
\begin{proposition} \label{teogen} Let $\gamma$ and $\pi:E\to \gamma$, be as in the above definition. 
Let  $f_1,\dots,f_\ell$ be a finite generating collection of  sections of the bundle $E$. Then every $C^{\infty}$-smooth 
section of the bundle $E$ is a linear combination of the sections $f_1,\dots,f_\ell$ with coefficients 
being $C^{\infty}$-smooth functions on $\gamma$.
\end{proposition}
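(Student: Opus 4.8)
The plan is to reformulate the generating hypothesis as the fiberwise surjectivity of a single bundle morphism, and then to manufacture the coefficient functions from a smooth right inverse of that morphism. First I would assemble the given sections into the bundle map
\[
\Phi:\gamma\times\rr^\ell\to E,\qquad \Phi(x,a_1,\dots,a_\ell):=\sum_{i=1}^\ell a_i f_i(x),
\]
defined on the trivial rank-$\ell$ bundle over $\gamma$. The assumption that the vectors $f_1(x),\dots,f_\ell(x)$ span $E_x$ for every $x$ says exactly that $\Phi$ is surjective on each fiber. Hence $\Phi$ has constant rank equal to $\operatorname{rank}E$, and therefore its kernel $\ker\Phi$ is a smooth vector subbundle of $\gamma\times\rr^\ell$.

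Next I would split this surjection smoothly. Equipping $\gamma\times\rr^\ell$ with its standard fiberwise Euclidean inner product, let $(\ker\Phi)^\perp$ denote the orthogonal complement subbundle; it is smooth, and $\Phi$ restricts on it to a fiberwise linear isomorphism $(\ker\Phi)^\perp\to E$. Inverting this isomorphism and composing with the inclusion $(\ker\Phi)^\perp\hookrightarrow\gamma\times\rr^\ell$ produces a smooth bundle map $\sigma:E\to\gamma\times\rr^\ell$ with $\Phi\circ\sigma=\mathrm{id}_E$. For an arbitrary smooth section $s$ of $E$, the composition $\sigma\circ s$ is then a smooth section of the trivial bundle, i.e.\ an $\ell$-tuple $(g_1,\dots,g_\ell)$ with each $g_i\in C^{\infty}(\gamma)$, and one gets $s=\Phi(\sigma\circ s)=\sum_{i=1}^\ell g_i f_i$, which is the desired representation.

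An equivalent partition-of-unity argument would also work and avoids choosing a complement. Around each point the generating condition lets one pick $r=\operatorname{rank}E$ of the sections $f_i$ forming a local frame on a neighborhood, the relevant $r\times r$ minor being nonzero there; one expresses $s$ in that frame with smooth coefficients, sets the remaining coefficients to zero, and glues the resulting local coefficient tuples by a partition of unity subordinate to the covering. This recovers the same conclusion without appeal to the constant-rank theorem.

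The only genuine obstacle is passing from the pointwise decomposition, which is immediate from the spanning property, to globally smooth coefficients; this is precisely what the constant-rank property of $\Phi$ resolves, since it yields a smooth kernel subbundle and hence a smooth splitting. Note that paracompactness of $\gamma$ is what makes available both the fiber metric used in the splitting and the partition of unity used in the alternative, so the argument requires no compactness assumption on $\gamma$, consistent with the hypotheses of the proposition.
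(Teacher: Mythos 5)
Your main argument is correct but genuinely different from the paper's. The paper proves this by the local-frame-plus-partition-of-unity argument that you sketch only as your ``alternative'': near each point one chooses $d=\dim E$ of the sections $f_{j_1},\dots,f_{j_d}$ forming a local frame, writes an arbitrary section in that frame with smooth coefficients (compactly supported if the section is), and then glues these local expressions using a locally finite covering and a partition of unity $\rho_\alpha$, decomposing $F=\sum_\alpha\rho_\alpha F$. Your primary route instead globalizes at once: the epimorphism $\Phi:\gamma\times\rr^\ell\to E$ has locally constant rank, so $\ker\Phi$ is a smooth subbundle, and the orthogonal complement with respect to the standard fiber metric on the trivial bundle yields a smooth right inverse $\sigma$ of $\Phi$; the coefficients are the components of $\sigma\circ s$. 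What your approach buys: the coefficients are produced by a single canonical bundle map, they depend linearly (and continuously) on the section $s$, and no gluing is needed; in fact, contrary to your closing remark, this route does not even use paracompactness, since the fiber metric you invoke is the canonical Euclidean one on the trivial bundle $\gamma\times\rr^\ell$ rather than an auxiliary metric that would have to be constructed by a partition of unity. What the paper's approach buys: it is more elementary, relying only on Cramer's rule and partitions of unity rather than on the constant-rank/subbundle theorem and the smooth invertibility of fiberwise-isomorphic bundle morphisms (though these are standard facts, so this is a matter of taste; paracompactness of $\gamma$ is then genuinely used, as it is implicitly assumed for manifolds throughout the paper).
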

The author is sure that this proposition is well-known to specialists, but he did not 
find a  reference. 

\begin{proof} Set $d=\dim E$. 
Each point of the manifold $\gamma$ has a neighborhood $U$ such that there exists a collection of distinct indices 
$j_1,\dots,j_d$ for which the values $f_{j_1}(x),\dots,f_{j_d}(x)$ are linearly independent at every $x\in U$. 
Each section $F$ on such a neighborhood is a linear combination $F=\sum_{i=1}^d\eta_if_{j_i}$, where $\eta_i$ are 
$C^{\infty}$-smooth functions on $U$. If $F$ has compact support in $U$, then so do $\eta_i$. 

Now fix a locally finite at most countable covering of the manifold $\gamma$ by open subsets $U_\alpha$ as above 
and the corresponding partition of unity consisting of functions $\rho_\alpha$ compactly supported in $U_\alpha$. 
Let $F$ be an arbitrary $C^{\infty}$-smooth section of the bundle $E$. 
Each function $\rho_{\alpha}F$ is compactly supported in $U_\alpha$ and 
hence $\rho_\alpha F=\sum_{i=1}^d\eta_{i,\alpha}f_{j_{i,\alpha}}$, where $\eta_{i,\alpha}$  
are compactly supported in $U_\alpha$. 
Writing $F=\sum_\alpha(\rho_\alpha F)$ and replacing  $\rho_\alpha F$ by the latter linear combinations yields 
that $F$ is a linear combination of the sections $f_j$ with $C^{\infty}$-smooth coefficients.
The proposition is proved.
\end{proof}

\begin{proof} {\bf of Lemma \ref{pk=4}.} Each element  $h\in S^{k-1}(\ttg)$ is  a 
linear combination of the images $G^-_k(Q_{m,i,j}|_{T\gamma})$ with 
coefficients $\eta_{m,i,j}\in C^{\infty}(\gamma)$, by  
Propositions \ref{ppbasis}, \ref{progen} and  \ref{teogen}; $Q_{m,i,j}=y^my_i\otimes s_j-y^my_j\otimes s_i$, 
$|m|=k-1$. 
 The elements $h_{m,i,j}:=(\eta_{m,i,j}y^my_i)\otimes s_j-(\eta_{m,i,j}y^my_j)\otimes 
 s_i\in S^k(T^*\gamma)\otimes_\rr C^{\infty}(\gamma)$ 
 lie in $\ker G^+_k$, by construction. One has $G^-_kh_{m,i,j}=\eta_{m,i,j}G^-_k(Q_{m,i,j}|_{T\gamma})$, 
 since the operator $G^-_k$ is $C^{\infty}(\gamma)$-linear in the first tensor factor: $G^-_k(\eta\phi\otimes f)=
 \eta G^-_k(\phi\otimes f)$ for every $\eta\in C^{\infty}(\gamma)$, by definition, see (\ref{degk-}). 
 Therefore, $h=G^-_k(\sum_{m,i,j}h_{m,i,j})$. This proves Lemma \ref{pk=4} for the number $d_{k,n}$ being 
 equal to the number of elements $Q_{m,i,j}$ with $|m|=k-1$, $N=2n$: $d_{k,n}<(2N)^{k+1}$. 
\end{proof}
 \begin{proposition} \label{la40} The space $\La_3$ is contained in  $\{\La_4,\La_0\}$.
 \end{proposition}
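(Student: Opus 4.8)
The plan is to realize every $H_{3,h}$ with $h\in S^3(\ttg)$ as a finite $\rr$-combination of brackets from $\{\La_4,\La_0\}$, using Lemma \ref{pk=4} to supply the building blocks and one extra auxiliary bracket to clean up a higher-order remainder. First I would apply Lemma \ref{pk=4} with $k=4$ (legitimate since $4$ is even): for the given $h$ it produces finitely many pairs $(\phi_j,f_j)$, $\phi_j\in S^4(\ttg)$, $f_j\in C^{\infty}(\gamma)$, satisfying the two relations $\sum_j\phi_j(y)\frac{df_j}{dy}=0$ and $\sum_j\frac{d_y\phi_j(y)}{d\nabla f_j}=h$. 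I would then form $\Sigma:=\sum_j\{H_{4,\phi_j},H_{0,f_j}\}\in\{\La_4,\La_0\}$ and split it into its two homogeneous components in $\La_3\oplus\La_5$ via formula (\ref{poishf}) with $k=4$.

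By (\ref{poishf}) the lower ($\La_3$) part of the $j$-th bracket is $(\sqrt{1+||y||^2})^{-1}\frac{d_y\phi_j(y)}{d\nabla f_j}$, so the lower part of $\Sigma$ is exactly $(\sqrt{1+||y||^2})^{-1}h=H_{3,h}$, which is the target. The higher ($\La_5$) part of the $j$-th bracket is $(\sqrt{1+||y||^2})^{-1}\bigl((\nabla_y(f_j\phi_j))(y)+2\phi_j(y)<\nabla f_j,y>\bigr)$; expanding the covariant derivative by Leibniz, $(\nabla_y(f_j\phi_j))(y)=<\nabla f_j,y>\phi_j(y)+f_j(\nabla_y\phi_j)(y)$, this equals $(\sqrt{1+||y||^2})^{-1}\bigl(3<\nabla f_j,y>\phi_j(y)+f_j(\nabla_y\phi_j)(y)\bigr)$. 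Summing over $j$ and using $\frac{df_j}{dy}=<\nabla f_j,y>$ together with the kernel relation $\sum_j\phi_j(y)\frac{df_j}{dy}=0$ annihilates the first group, so the higher part of $\Sigma$ reduces to $(\sqrt{1+||y||^2})^{-1}R$ with $R:=\sum_j f_j(\nabla_y\phi_j)(y)\in S^5(\ttg)$.

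The step I expect to be decisive is recognizing that this leftover $R$ is already a pure $V_4$-image, hence itself lies in $\{\La_4,\La_0\}$ and can be subtracted off. Put $\Phi:=\sum_j f_j\phi_j\in S^4(\ttg)$. Applying Leibniz once more, $V_4(\Phi)(y)=(\nabla_y\Phi)(y)=\sum_j\bigl(<\nabla f_j,y>\phi_j(y)+f_j(\nabla_y\phi_j)(y)\bigr)=\sum_j\phi_j(y)\frac{df_j}{dy}+R=R$, again because the first sum vanishes by the kernel relation. On the other hand, for the constant function $f\equiv1$ one has $\nabla f=0$, so formula (\ref{poishf}) shows that $\{H_{4,\Phi},H_{0,1}\}$ has zero lower part and higher part exactly $(\sqrt{1+||y||^2})^{-1}V_4(\Phi)(y)=(\sqrt{1+||y||^2})^{-1}R$.

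Subtracting then gives $H_{3,h}=\Sigma-\{H_{4,\Phi},H_{0,1}\}\in\{\La_4,\La_0\}$, and as $h\in S^3(\ttg)$ was arbitrary this yields $\La_3\subset\{\La_4,\La_0\}$. The genuine obstacle is precisely the higher-order remainder $R$: Lemma \ref{pk=4} only pins down the two contractions $\sum_j\phi_j\frac{df_j}{dy}$ and $\sum_j\frac{d_y\phi_j}{d\nabla f_j}$, and leaves an a priori uncontrolled $\La_5$ component in $\Sigma$. The identity $R=V_4(\Phi)$ is what saves the argument: it converts the single kernel constraint $\sum_j\phi_j(y)\frac{df_j}{dy}=0$ into the statement that the entire remainder is cancelled by one auxiliary $f\equiv1$ bracket. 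Everything else is the routine separation of homogeneous components afforded by (\ref{poishf}).
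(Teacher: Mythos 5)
Your proposal is correct and is essentially the paper's own argument: the paper likewise applies Lemma \ref{pk=4} with $k=4$ to produce the kernel element realizing $h$, forms the sum of brackets $B=\sum_i\{H_{4,\phi_i},H_{0,f_i}\}$ (your $\Sigma$), identifies $\pi_5(B)$ as $\frac{(\nabla_y\phi)(y)}{\sqrt{1+||y||^2}}$ with $\phi=\sum_i f_i\phi_i$ (your $\Phi$), and cancels it by subtracting the single auxiliary bracket $\{\frac{\phi(y)}{\sqrt{1+||y||^2}},\frac{1}{\sqrt{1+||y||^2}}\}=\{H_{4,\phi},H_{0,1}\}$. Your write-up merely makes explicit, via the two Leibniz expansions, the identity the paper summarizes as ``by construction and (\ref{poishf})''.
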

 \begin{proof} The Poisson bracket space $\{\La_0,\La_4\}$ is contained in $\La_3\oplus\La_5$ (Proposition \ref{propoisf}). 
Fix an arbitrary $A=\frac{h(y)}{\sqrt{1+||y||^2}}\in\La_3$; $h\in S^3(\ttg)$. Let us show that 
$A\in\{\La_4,\La_0\}$. To do this, fix an element 
$$X:=\sum_{i=1}^d\phi_i\otimes f_i\ \in \ S^4(\ttg)\otimes_\rr C^{\infty}(\gamma), \ \ \ X\in\ker G^+_4, \ G^-_4(X)=h.$$
It exists by Lemma \ref{pk=4}. Set 
$$H_i:=\frac{\phi_i(y)}{\sqrt{1+||y||^2}}\in\La_4, \ F_i:=\frac{f_i(x)}{\sqrt{1+||y||^2}}\in\La_0, \ 
B:=\sum_{i=1}^d\{ H_i, F_i\}\in\La_3\oplus\La_5.$$
One has 
\begin{equation}\pi_3(B)=A, \ \ \pi_5(B)=\frac{(\nabla_y\phi)(y)}{\sqrt{1+||y||^2}}, \ \phi:=\sum_{i=1}^df_i\phi_i\in S^4(\ttg),\label{p3f3}\end{equation}
 by construction and (\ref{poishf}). Now replacing $B$ by 
 $$\wt B:=B-\{\frac{\phi(y)}{\sqrt{1+||y||^2}}, \frac1{\sqrt{1+||y||^2}}\}$$ \
 we cancel the remainder $\pi_5(B)$ in $\pi_5$ without changing $\pi_3$  and get that $\pi_5(\wt B)=0$, $\pi_3(\wt B)=
 \wt B=A\in\{\La_4,\La_0\}$. 
 The proposition is proved.
 \end{proof}
 
 \begin{proof} {\bf of Theorems \ref{tlidf1}.} Theorems \ref{tlidf1} follows from Corollary \ref{coroif} and 
 Proposition \ref{la40}.
 \end{proof}
 
 \subsection{Proofs of Theorems \ref{tlidf}, \ref{tlidense} and \ref{tlidense2} for $n\geq2$}
 \begin{proposition} \label{plode}  Let $U\subset\gamma$ be a local chart identified with a contractible domain in $\rr^n$. 
 The restrictions to $(T\gamma)|_U$ of functions from the space $\La:=\oplus_{k=0}^{\infty}\La_k$ are 
 $C^{\infty}$-dense in the space of $C^{\infty}$-functions on $(T\gamma)|_U$. 
  \end{proposition}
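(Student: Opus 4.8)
The plan is to clear the common denominator $\sqrt{1+||y||^2}$ and reduce to the classical Weierstrass approximation theorem, exactly as in the proof of Theorem \ref{tlidf} for an interval in Subsection 2.2. Identify $U$ with a contractible domain in $\rr^n$ and, using the coordinate frame on $U$, identify $(T\gamma)|_U$ with $U\times\rr^n_y\subset\rr^n_x\times\rr^n_y$; equip $C^{\infty}((T\gamma)|_U)$ with the topology of uniform $C^{\infty}$-convergence on compact subsets. First I would note that multiplication by $\sqrt{1+||y||^2}$ is a linear homeomorphism of this space onto itself: both $\sqrt{1+||y||^2}$ and $(\sqrt{1+||y||^2})^{-1}$ are $C^{\infty}$-smooth on all of $U\times\rr^n$, so by the Leibniz rule multiplication by either of them is continuous in the above topology. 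Hence $\La$ is $C^{\infty}$-dense in $C^{\infty}((T\gamma)|_U)$ if and only if $\sqrt{1+||y||^2}\cdot\La$ is.

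Next I would identify the latter space. By definition $\sqrt{1+||y||^2}\cdot\La_k$ is the space of functions on $(T\gamma)|_U$ that restrict to homogeneous polynomials of degree $k$ in the fiber variable $y$ with coefficients in $C^{\infty}(U)$, i.e. $S^k(T^*\gamma)|_U$; summing over $k$, the space $\sqrt{1+||y||^2}\cdot\La$ is precisely the space of all polynomials in $y$ with $C^{\infty}(U)$-coefficients. In particular it contains every polynomial in the $2n$ variables $(x,y)$. So it suffices to prove that polynomials in $(x,y)$ are $C^{\infty}$-dense in $C^{\infty}(U\times\rr^n)$.

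This last statement is the Weierstrass theorem in the $C^{\infty}$ sense: every compact subset of $U\times\rr^n$ lies in a product $K_1\times K_2$ of compact sets, and on a neighborhood of such a product any smooth function can be approximated together with all its derivatives by polynomials — for instance by extending it to a compactly supported smooth function on $\rr^{2n}$, convolving with a Gaussian mollifier (the convolutions converge to it in $C^{\infty}$ on compact sets and are real-analytic), and replacing each convolution by a high enough Taylor partial sum. This proves the proposition. I do not expect any real obstacle here; the only points requiring (routine) care are the continuity of multiplication by $(\sqrt{1+||y||^2})^{\pm1}$ in the compact $C^{\infty}$-topology, which is immediate from the Leibniz rule, and the fact that the approximation is in the $C^{\infty}$-topology rather than merely uniform, which is classical.
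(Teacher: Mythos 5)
Your argument is correct and follows essentially the same route as the paper's proof: clear the common factor $\sqrt{1+||y||^2}$ (noting that multiplication by it and by its reciprocal is continuous for the compact-open $C^{\infty}$ topology) and invoke Weierstrass density of polynomials in $(x,y)$ with all derivatives on compact sets. The one imprecision is your claim that $\sqrt{1+||y||^2}\cdot\La$ restricted to $(T\gamma)|_U$ is \emph{precisely} the space of polynomials in $y$ with arbitrary $C^{\infty}(U)$ coefficients: since $\La_k$ is built from \emph{global} sections $\phi\in S^k(T^*\gamma)$, a coefficient function on $U$ (for instance a chart coordinate, or a function blowing up at $\partial U$) need not be the restriction of a global one, so you only get an inclusion of the compactly supported (in $U$) polynomials plus density, not an equality. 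This is harmless for the conclusion --- multiply the polynomial coefficients by bump functions compactly supported in $U$, exactly as the paper does; since density is tested on compact subsets of $(T\gamma)|_U$, nothing is lost.
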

 \begin{proof} Let $z=(z_1,\dots,z_n)$ be the coordinates of the chart $U$. 
 The restricted tangent bundle $(T\gamma)|_U$ is the direct product $\rr^n_y\times U_z$. The restrictions to $(T\gamma)_U$ of functions from the space $\sqrt{1+||y||^2}\La$ are polynomials in $y$ with coefficients $C^{\infty}$-smoothly depending on $z$.   They include all the polynomials with coefficients being functions compactly 
 supported in $U$. The latter polynomials  are $C^{\infty}$-dense in the space of $C^{\infty}$-smooth functions on 
 $(T\gamma)|_U$. Indeed, the polynomials in $(y,z)$ are dense (Weierstrass Theorem). Multiplying them  
 by bump functions in $z$ compactly supported in $U$ we get polynomials of the above type 
 and conclude their density. 
   Therefore, $\sqrt{1+||y||^2}\La$ is dense there, and hence, so is $\La$. 
 \end{proof}

 \begin{proof} {\bf of Theorem \ref{tlidf}.} Fix a locally finite covering of the manifold $\gamma$ by 
 local contractible charts $U_\ell$. Let $(\rho_\ell)_{\ell=1,2,\dots}$ be a partition of unity, $\supp\rho_\ell\Subset U_\ell$. 
 Fix an arbitrary $h\in C^{\infty}(T\gamma)$. For every $\ell$ the function $h\rho_\ell$ has support projected 
 inside a compact subset  in $U_\ell$. Its restriction to $(T\gamma)_{U_\ell}$ 
  is a $C^{\infty}$-limit of the restrictions to $(T\gamma)|_{U_\ell}$ of functions  $h_{k,\ell}\in\La$, 
   by Proposition \ref{plode}. For every $\ell$ fix an arbitrary function 
 $\alpha_\ell(z)$ compactly supported in $U_\ell$ that is identically equal to 1  on a neighborhood 
 of $\pi(\supp(h\rho_\ell))$. 
 The functions $\wt h_{k,\ell}:=\alpha_\ell(z)h_{k,\ell}$ lie in $\La$ and converge to $h\rho_\ell$, as $k\to\infty$,
  in the $C^{\infty}$-topology 
 of the space of functions on $T\gamma$, by construction. 
 The sums $\sum_{\ell=1}^m\wt h_{k,\ell}$ lie in $\La$ for every $k$ and $m$
  and $C^{\infty}$-converge to $h$, as $k$ and $m$ tend to infinity, by construction and local finiteness of  covering. 
 Finally, $\La$ is  $C^{\infty}$-dense in $C^{\infty}(T\gamma)$. 
 
 Recall that the Lie  algebra $\gh$ consists of functions on $T_{<1}\gamma$. It is identified with $\Lambda$ by  
 the diffeomorphism $T_{<1}\gamma\to T\gamma$, $(x,w)\mapsto(x,y:=\frac{w}{\sqrt{1-||w||^2}})$, 
  by  Theorem \ref{tlidf1}. This together with density of the algebra 
 $\La$ implies density of the algebra $\gh$ in the space $C^{\infty}(T_{<1}\gamma)$ and proves Theorem \ref{tlidf}.
 \end{proof}

Theorems \ref{tlidense} and \ref{tlidense2}  follow immediately from Theorems \ref{tlidf} and \ref{forh}.

 \section{Density of pseudo-groups. Proofs of Theorems \ref{tc1}, \ref{tc2}, \ref{tds3} and Corollary \ref{c1}} 
 
 Let $\gamma$ be either a global strictly convex closed hypersurface in $\rr^{n+1}$, or a germ of hypersurface 
 in $\rr^{n+1}$.  
 It is supposed to be $C^{\infty}$-smooth. In what follows $\Pi$ will denote an open subset in the 
 space of oriented lines in $\rr^{n+1}$ where the reflection from the curve $\gamma$ 
 is well-defined: it is either the phase cylinder, as in Subsection 1.1, or diffeomorphic to a contractible domain 
 in $\rr^{2n}$, as in Subsection 1.2. First we prove Theorems \ref{tc1}, \ref{tc2} and Corollary \ref{c1}. Then we prove Theorem \ref{tds3}.
 
 \subsection{Density in Hamiltonian symplectomorphism pseudogroup. 
 Proofs of Theorems \ref{tc1}, \ref{tc2} and Corollary \ref{c1}} 
 \begin{definition} Let $M$ be a manifold, $V\subset M$ be an open subset, 
  $v$ be a smooth vector field on $M$, and $t\in\rr$, $t\neq0$. We say that the time $t$ flow map $g^t_v$ 
 is {\it well-defined on $V$,} if all the flow maps $g^{\tau}_v$ with $\tau\in(0,t]$ ($\tau\in[t,0)$, 
 if $t<0$) are well-defined 
 on $V$, that is, the corresponding differential equation has a well-defined solution for every initial 
 condition in $V$ for all the time values $\tau\in(0,t]$ (respectively, $\tau\in[t,0)$). 
 \end{definition}

 Recall that $\gh$ denotes the Lie algebra  of functions on $\Pi$ generated 
 by the space $\La_0$ of functions of type $H_f(x,w)=-2f(x)\sqrt{1-||w||^2}$ with respect to the 
 Poisson bracket for all $f\in C^{\infty}(\gamma)$. 
  
 \begin{proposition} \label{pmcg} Let $\mcf$ denote the pseudogroup generated by  flow maps 
 (well-defined on domains in the sense of the above definition) of the 
 Hamiltonian vector fields with Hamiltonian functions from the Lie algebra $\gh$. For every mapping 
 $\delta:C^{\infty}(\gamma)\to\rr_+$ the  $C^{\infty}$-closure $\overline{\mcg(\delta)}$ of the 
 corresponding pseudogroup $\mcg(\delta)$, see (\ref{pseudod}), contains $\mcf$.
 \end{proposition}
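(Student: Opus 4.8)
The plan is to realize the generators of $\mcf$ as $C^\infty$-limits of finite compositions of the maps $\Delta\mct_{\var,h}$, in three layers: an Euler-type approximation of the flows of the fields $v_f$ themselves, the group commutator trick for Poisson brackets, and the Lie product formula for linear combinations. Throughout I use that $\overline{\mcg(\delta)}$ is a pseudogroup (hence closed under the compositions and inversions performed below) and that it is closed under passing to $C^\infty$-limits of its own elements along sequences of domains $V_n$ with $V_n\cap V\to V$ — this last point being a diagonal argument over the defining approximating sequences and the domain-convergence definition. Writing $X_H$ for the Hamiltonian vector field of a function $H$ on $\Pi$, Theorem \ref{forh} gives $v_f=X_{H_f}$, so the Hamiltonian vector fields with Hamiltonian functions in $\gh$ are precisely the members of the Lie algebra $\gg$ of vector fields generated by the $v_f$; hence it suffices to prove that $g^t_v\in\overline{\mcg(\delta)}$ for every $v\in\gg$ and every open $V$ on which $g^t_v$ is well-defined, because $\mcf$ is the pseudogroup generated by such flow maps.

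\emph{Layer 1: flows of $v_f$.} The family $\Delta\mct_{\var,f}$ is jointly $C^\infty$ in $\var$ and the point of $\Pi$ on its domain, with $\Delta\mct_{0,f}=\mathrm{Id}$ and $\partial_\var\big|_{\var=0}\Delta\mct_{\var,f}=v_f$, so $\Delta\mct_{\var,f}=g^{\var}_{v_f}+O(\var^2)$ uniformly with all derivatives on compact subsets. Fix $f$, $t>0$, a domain $V$ on which $g^t_{v_f}$ is well-defined, and a compact $K\Subset V$. For $N$ large, $\var:=t/N$ is less than $\delta(f)$ and less than the threshold beyond which $\Delta\mct_{\var,f}$ is defined on a fixed compact neighborhood of the compact trajectory $\bigcup_{0\le\tau\le t}g^{\tau}_{v_f}(K')$ of a slightly larger compact $K'\Subset V$; the standard convergence estimate for the Euler scheme then shows that $(\Delta\mct_{t/N,f})^{\circ N}$ is defined on a neighborhood of $K$ and converges to $g^t_{v_f}$ in $C^\infty$ on $K$, with the domains converging to $V$ as required. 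Thus $g^t_{v_f}\in\overline{\mcg(\delta)}$ for $t\ge0$; for $t<0$ replace $f$ by $-f$, so $v_{-f}=-v_f$ and $g^t_{v_{-f}}=g^{-t}_{v_f}$ (equivalently, use inverses in the pseudogroup).

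\emph{Layer 2: brackets and sums.} Assume $g^t_{v_1},g^t_{v_2}\in\overline{\mcg(\delta)}$ for all $t$, for $v_1,v_2\in\gg$. The group commutator estimate
\[
\psi_s:=g^{-s}_{v_2}\circ g^{-s}_{v_1}\circ g^{s}_{v_2}\circ g^{s}_{v_1}=g^{\,s^2}_{[v_1,v_2]}+O(s^3),
\]
valid uniformly with all derivatives on compact subsets, shows $\psi_s\in\overline{\mcg(\delta)}$ for small $s$; composing $\lfloor t/s^2\rfloor$ copies of $\psi_s$ and letting $s\to0$ (the accumulated error being $O(s)$, controlled as in Layer 1) gives $g^t_{[v_1,v_2]}\in\overline{\mcg(\delta)}$ for all $t$. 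By induction on bracket length, starting from Layer 1, $g^t_{w}\in\overline{\mcg(\delta)}$ for every iterated Poisson bracket $w$ of the fields $v_f$ and every $t$. For an arbitrary $v=\sum_{i=1}^m c_i w_i\in\gg$ with the $w_i$ such iterated brackets, the Lie product formula
\[
g^t_v=\lim_{N\to\infty}\bigl(g^{c_1 t/N}_{w_1}\circ\cdots\circ g^{c_m t/N}_{w_m}\bigr)^{\circ N},
\]
with the same domain bookkeeping, exhibits $g^t_v$ as a $C^\infty$-limit of elements of $\overline{\mcg(\delta)}$, so $g^t_v\in\overline{\mcg(\delta)}$.

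\emph{Conclusion and main obstacle.} Every generator of $\mcf$, i.e.\ every flow map $g^t_{X_H}$ with $H\in\gh$, thus lies in $\overline{\mcg(\delta)}$, and since the latter is a pseudogroup it contains the whole pseudogroup $\mcf$. The analytic estimates invoked (the $O(\var^2)$ bound for $\Delta\mct_{\var,f}$, the $O(s^3)$ commutator bound, and the convergence of the Euler scheme and the Lie product formula) are classical and use no feature of billiards beyond joint smoothness of $\Delta\mct_{\var,f}$ in $(\var,\lambda)$ with $\partial_\var|_0=v_f$. The real point that has to be tracked consistently through all three layers is the domain bookkeeping: flow maps and their approximants are defined on different open sets, so one must check at each step that, for a prescribed $K\Subset V$, the entire finite composition in question is well-defined on a fixed neighborhood of $K$ for all large $N$ (resp.\ small $s$) — which is exactly where one uses that the relevant trajectories sweep out a compact subset of $\Pi$ and that $\Delta\mct_{\var,\cdot}$ is defined on any compact once $\var$ is small enough — and that the domains $V_N$ of the approximants satisfy $V_N\cap V\to V$, so that the limits genuinely belong to the $C^\infty$-closure.
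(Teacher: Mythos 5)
Your proposal is correct and follows essentially the same route as the paper: an Euler-scheme approximation $(\Delta\mct_{t/N,f})^{\circ N}\to g^t_{v_f}$ for the generating flows, the group-commutator composition for iterated brackets, and the Lie product formula for linear combinations, with the same domain bookkeeping and the same use of the fact that the $C^\infty$-closure is itself a pseudogroup. No substantive differences.
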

 \begin{proof} Let $L_0$ denote the space of Hamiltonian vector fields with Hamiltonian functions from the 
 space $\La_0$. The Lie algebra of Hamiltonian vector fields with Hamiltonians in $\gh$ consists of finite linear combinations of successive commutators of vector fields from the space $L_0$. 
 
 \medskip
 \noindent {\bf Claim 1.} {\it The well-defined 
 flow maps (on domains) of each iterated commutator  of a collection of vector fields in $L_0$ 
 are contained in $\overline{\mcg(\delta)}$.}
 
 \begin{proof} Induction in the number of Lie brackets in the iterated commutator. 
 
 Base of induction. The space $L_0$ consists of 
 the derivatives $v_h:=\frac{d}{d\var}\Delta\mct_{\var,h}$, $\Delta\mct_{\var,h}=\mct_{\gamma_{\var,h}}^{-1}
 \circ\mct_\gamma$, for all  $C^{\infty}$-smooth functions $h$ on $\gamma$. 
 Fix an arbitrary function $h\in C^{\infty}(\gamma)$, 
 a domain $W\subset\Pi$ and a $t\in\rr\setminus\{0\}$, say  
 $t>0$, such that the flow map $g^t_{v_h}$ is well-defined on $W$. 
Note that the mapping $F_\var:=(\Delta\mct_{\var,h})^{[\frac t\var]}$ converges to 
$g^t_{v_h}$, as $\var\to0$, whenever it is defined. Since $g^t_{v_h}$ is 
 well-defined on $W$, we therefore see that $F_\var$ is well-defined on 
 a domain $W_\var\subset\Pi$ such that $W_\var\cap W\to W$, as $\var\to0$, 
and $F_\var$ converges to $g^t_{v_h}$ on $W$ in the $C^{\infty}$ topology. Note that $F_\var\in\mcg(\delta)$ for $\var\leq\delta(h)$. 
 Hence, $g^t_{v_h}|_W\in\overline{\mcg(\delta)}$. Thus, the flow maps of the vector fields in $L_0$ 
 are contained in  $\overline{\mcg(\delta)}$. 
 
 Induction step. Let  $v$ and $w$ be vector fields on $\Pi$ such that their well-defined  
 flow maps lie in  $\overline{\mcg(\delta)}$. Let us prove the same statement for 
 their commutator $[v,w]$. Note that for small $\tau$ the flow maps 
 $g^\tau_v$, $g^{\tau}_w$, $g^t_{[v,w]}$ are well-defined on domains in $\Pi$ converging to $\Pi$, as 
 $\tau\to0$. Let  $t\in\rr\setminus\{0\}$, say $t>0$, and a domain $D\subset\Pi$ be such that the flow map $g^t_{[v,w]}$ 
 be well-defined on $D$. Then  $g^t_{[v,w]}|_D\in\overline{\mcg(\delta)}$, by a classical 
 argument: the composition 
 $(g^{\frac tN}_v\circ g^{\frac tN}_w\circ g^{-\frac tN}_v\circ g^{-\frac tN}_w)^{N^2}$ is well-defined 
 on a domain $W_N$ with $W_N\cap D\to D$, as $N\to\infty$; it belongs 
 to $\overline{\mcg(\delta)}$ and converges to $g^t_{[v,w]}$ on $D$  in the $C^{\infty}$-topology. 
 The induction step is done. The claim is proved.
 \end{proof}
  
 \noindent {\bf Claim 2.} {\it Let well-defined flow maps of vector fields $v$ and $w$ lie in $\overline{\mcg(\delta)}$. Then 
  well-defined flow maps of all their linear combinations also lie in $\overline{\mcg(\delta)}$.}
 
 \begin{proof} It suffices to prove the claim for the sum $v+w$, since $g^t_{cv}=g^{ct}_v$. The 
 composition  $(g^{\frac tN}_vg^{\frac tN}_w)^N$ obviously converges to $g^t_{v+w}$ on every domain where 
 the latter flow map is well-defined. 
 This proves the claim. 
 \end{proof}
 
 Claims 1 and 2 together imply  the statement of Proposition \ref{pmcg}.
 \end{proof}
 
 Recall the following well-known notion.
 \begin{definition} We say that a symplectomorphism $F$ of a symplectic manifold $M$ 
 has {\it  compact support} 
 if it is  identity outside some compact subset. 
 We say that it is {\it Hamiltonian with compact 
 support,} if it can be connected to the identity by a smooth path $F_t$ in the group of 
 symplectomorphisms $M\to M$, $F_0=Id$, $F_1=F$,
  such that the vector fields $\frac{dF_t}{dt}$ are Hamiltonian 
 with compact supports contained in one and the same compact subset.  
 \end{definition}

 \begin{proposition} \label{hamsup} The group of Hamiltonian symplectomorphisms with compact support is $C^\infty$-dense in the pseudogroup of $M$-Hamiltonian symplectomorphisms between domains 
 in the ambient manifold $M$.
 \end{proposition}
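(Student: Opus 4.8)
The plan is to turn an arbitrary $M$-Hamiltonian symplectomorphism into a genuinely compactly supported one by truncating its time-dependent generating Hamiltonian far away from a prescribed large compact piece of the domain, and then to let that compact piece exhaust the domain. First I would set up the generating data: let $F:V\to W\subset M$ be an $M$-Hamiltonian symplectomorphism and fix a family $F_t:V\to V_t\subset M$ as in Definition \ref{defmham}, so that $F_0=\mathrm{Id}$, $F_1=F$, and the velocity field $Y_t:=\frac{dF_t}{dt}\circ F_t^{-1}$ is a Hamiltonian vector field on $V_t$ for every $t\in[0,1]$. By definition this means $\iota_{Y_t}\omega=dh_t$ for some $h_t\in C^{\infty}(V_t)$; normalizing $h_t$ on each connected component of $V_t$ by line integration of the exact form $\iota_{Y_t}\omega$ from a fixed base point one obtains primitives $h_t$ depending smoothly on $t$. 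Since $(t,x)\mapsto F_t(x)$ is smooth, the ``space--time'' set $\mathcal V:=\{(t,y)\in[0,1]\times M:\ y\in V_t\}$ is open and $(t,y)\mapsto h_t(y)$ is $C^{\infty}$ on $\mathcal V$; I will work on $\mathcal V$ throughout.

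Next, fix an exhaustion $K_1\Subset K_2\Subset\cdots$ of $V$ by compacts with $\bigcup_nK_n=V$. For fixed $n$ the ``orbit'' $\mathcal C_n:=\{(t,F_t(x)):\ t\in[0,1],\ x\in K_n\}$ is a compact subset of $\mathcal V$, being the image of $[0,1]\times K_n$ under the continuous map $(t,x)\mapsto(t,F_t(x))$. Pick an open $\mathcal U_n$ with $\mathcal C_n\subset\mathcal U_n$, $\overline{\mathcal U_n}$ compact and $\overline{\mathcal U_n}\subset\mathcal V$, and a smooth $\beta_n:[0,1]\times M\to[0,1]$ equal to $1$ on a neighborhood of $\mathcal C_n$ and supported in $\mathcal U_n$. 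Set $\tilde h^{(n)}_t(y):=\beta_n(t,y)h_t(y)$ on $\mathcal V$ and $\tilde h^{(n)}_t:=0$ off $\mathcal V$: this is a smooth time-dependent Hamiltonian on $M$ whose support, for every $t$, lies in one fixed compact subset of $M$ (the $M$-projection of $\overline{\mathcal U_n}$), so its Hamiltonian flow $G^{(n)}_t$ is complete and globally defined on $M$. Because $X_{\tilde h^{(n)}_t}=X_{h_t}=Y_t$ on a neighborhood of $F_t(K_n)$, for each $x\in K_n$ the set of $t\in[0,1]$ with $G^{(n)}_\tau(x)=F_\tau(x)$ for all $\tau\le t$ is nonempty (it contains $0$), open (uniqueness for the defining ODE near any point of $F_t(K_n)$, where the two fields agree) and closed (continuity, together with $F_t(x)\in F_t(K_n)$ staying inside the region of agreement); hence it is all of $[0,1]$. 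Thus $G^{(n)}_t=F_t$ on $K_n$ for every $t$, so $G^{(n)}_1$ is a Hamiltonian symplectomorphism of $M$ with compact support (generated by the isotopy $G^{(n)}_t$, whose velocity fields have support in one fixed compact) that coincides with $F$ on $K_n$.

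It then remains to observe that $(G^{(n)}_1)_n$ converges to $F$ on $V$ in the pseudogroup topology: each $G^{(n)}_1$ is defined on all of $M\supset V$, and every compact $K\Subset V$ lies in $K_n$ for $n$ large, so $G^{(n)}_1\equiv F$ on $K$ for such $n$ (in particular with all derivatives). This yields the density statement. The two points that need genuine care, rather than being purely formal, are: the smooth-in-$t$ choice of the primitive $h_t$, which crucially uses \emph{exactness} of $\iota_{Y_t}\omega$ on each $V_t$ — precisely the content of ``$Y_t$ is a Hamiltonian vector field'' — and not merely its closedness; and the verification that the truncated flow does not leak out of the region where the Hamiltonian was left unchanged, carried out by the open-closed argument above. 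I expect the main nuisance to be the bookkeeping forced by the varying domains $V_t$, which is exactly why passing to the fixed open set $\mathcal V\subset[0,1]\times M$ and cutting off there is convenient.
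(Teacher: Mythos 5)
Your proposal is correct and follows essentially the same route as the paper: both truncate the time-dependent generating Hamiltonian of the isotopy $F_t$ to obtain compactly supported Hamiltonian isotopies of $M$ whose time-one maps converge to $F$ on $V$. The only difference is one of execution — the paper approximates the pulled-back Hamiltonian $g$ on $V\times[0,1]$ by compactly supported functions and asserts $C^{\infty}$-convergence of the resulting flows, whereas your cutoff $\beta_n\equiv1$ on a neighborhood of the space–time orbit of $K_n$ makes the truncated flow agree with $F$ \emph{exactly} on $K_n$, which is a slightly cleaner way to obtain the same convergence.
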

 
 \begin{proof} Consider  the Hamiltonian vector fields $\frac{dF_t}{dt}$ on domains $V_t$ 
 from the definition of a $M$-Hamiltonian 
 symplectomorphism $V\to U\subset M$ (Definition \ref{defmham}); let $g_t:V_t\to\rr$ denote the corresponding 
 Hamiltonian functions. We identify $V_t$ with $V$ by the maps $F_t$ 
 and consider $g_t$ as one function $g$ on  $V\times[0,1]_t$. We can approximate it 
 by functions $g_n$ with compact supports, $g_n\to g$ in the $C^{\infty}$-topology:    
 the convergence is uniform with all derivatives on compact subsets. For every $n$ the approximating function 
 yields a family of globally defined functions $g_{t,n}:M\to\rr$ with supports lying in the 
 same compact subset: the image of  $\supp g_n\Subset V\times[0,1]$ 
 under the map $(x,\tau)\mapsto F_{\tau}(x)$.  Let $v_{n,t}$ denote the Hamiltonian vector fields with Hamiltonians $g_{t,n}$. 
Then the time 1 map of the 
non-autonomous differential equation defined by $v_{t,n}$ converges to $F$ on $V$ 
in the $C^{\infty}$-topology, as $n\to\infty$. 
\end{proof}

\begin{proposition} \label{propham} The whole group of Hamiltonian symplectomorphisms $\Pi\to\Pi$ with compact 
support lies in the $C^{\infty}$-closure of the pseudogroup generated by well-defined flow maps of 
Hamiltonian vector fields with Hamiltonian functions from the Lie algebra $\gh$. 
\end{proposition}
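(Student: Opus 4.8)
The plan is to realise $F$ as a $C^{\infty}$-limit of finite compositions of well-defined flow maps of Hamiltonian vector fields with Hamiltonian functions in $\gh$, in three steps; write $X_h$ for the Hamiltonian vector field with Hamiltonian function $h$, and $\mcf$ for the pseudogroup of Proposition \ref{pmcg}.

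\emph{Step 1 (reduction to a single short-time flow).} By the definition of a Hamiltonian symplectomorphism with compact support, $F$ is the time $1$ map of the non-autonomous flow of vector fields $X_{g_t}$, $t\in[0,1]$, whose Hamiltonian functions $g_t\in C^{\infty}(\Pi)$ all have support in one fixed compact set $K\Subset\Pi$; in particular every $X_{g_t}$ is complete and its flow fixes $\Pi\setminus K$ pointwise. I would use the standard fact from the theory of ordinary differential equations that such a time $1$ map is the $C^{\infty}$-limit, as $N\to\infty$, of the Euler compositions
\begin{equation}\Phi_N:=g^{1/N}_{X_{g_{t_N}}}\circ\dots\circ g^{1/N}_{X_{g_{t_1}}},\qquad t_j:=j/N,\label{eulerplan}\end{equation}
the convergence being uniform with all derivatives (all the $\Phi_N$ fix $\Pi\setminus K$). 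Granting that every factor $g^{1/N}_{X_{g_{t_j}}}$ lies in the $C^{\infty}$-closure $\overline{\mcf}$, it follows that $\Phi_N\in\overline{\mcf}$, since $\overline{\mcf}$ is a pseudogroup (the $C^{\infty}$-closure of one), hence closed under composition; and then $F=\lim_N\Phi_N\in\overline{\mcf}$, since the $C^{\infty}$-closure of a pseudogroup is itself closed under $C^{\infty}$-limits (a routine diagonal argument). Thus it suffices to show, for an arbitrary $g\in C^{\infty}(\Pi)$ with compact support and every $N\in\nn$, that the single flow map $g^{1/N}_{X_g}$ lies in $\overline{\mcf}$.

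\emph{Step 2 (approximating the Hamiltonian inside $\gh$).} Here I would invoke Theorem \ref{tlidf}: the Lie algebra $\gh$ is $C^{\infty}$-dense in $C^{\infty}(\Pi)$ (via the symplectomorphism of $\Pi$ with $T_{<1}\gamma$, or with an open part of it in the local case). Pick $h_m\in\gh$ converging to $g$ uniformly on compact subsets of $\Pi$ with all derivatives. Wherever $g^{1/N}_{X_{h_m}}$ is well-defined on a domain, it belongs to $\mcf$ by construction. The goal is to prove that $g^{1/N}_{X_{h_m}}\to g^{1/N}_{X_g}$ in the sense required for membership in $\overline{\mcf}$: that these maps are well-defined on domains $D_m$ with $D_m\cap\Pi\to\Pi$ and converge to $g^{1/N}_{X_g}$ uniformly on compact subsets of $\Pi$ with all derivatives.

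\emph{Step 3 (the main obstacle: controlling the domains of the perturbed flows).} The one genuinely non-formal point is that $X_{h_m}$, unlike $X_g$, need not be compactly supported, so a priori its time $1/N$ flow could fail to be defined on a prescribed compact set. I would fix an arbitrary compact $Q\Subset\Pi$, choose a compact $Q'\Subset\Pi$ whose interior contains $Q$ and is large enough that all $X_g$-trajectories issued from $Q$ stay inside it for times in $[0,1/N]$ (possible since $X_g$ is complete and its flow is continuous), and then use a Gronwall estimate together with $X_{h_m}\to X_g$ in $C^{1}$ on $Q'$ to conclude that for all large $m$ the $X_{h_m}$-trajectories from $Q$ also stay inside $Q'$ for times in $[0,1/N]$; hence $g^{1/N}_{X_{h_m}}$ is well-defined on $Q$, and by continuous dependence of solutions of ordinary differential equations on the right-hand side (in the $C^{\infty}$-topology) it converges to $g^{1/N}_{X_g}$ uniformly on $Q$ with all derivatives. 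Letting $Q$ exhaust $\Pi$ yields $g^{1/N}_{X_g}\in\overline{\mcf}$, which together with Step 1 proves the proposition. Apart from this uniform-confinement argument, everything reduces to bookkeeping with $C^{\infty}$-convergence and the fact that $\overline{\mcf}$ is a pseudogroup closed under $C^{\infty}$-limits.
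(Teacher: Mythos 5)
Your proposal is correct and follows essentially the same route as the paper: split $F$ into a composition of short-time flows of the autonomous compactly supported Hamiltonian fields $X_{g_{t_j}}$, approximate each Hamiltonian by elements of $\gh$ using the density theorems, and pass to the limit inside the closed pseudogroup. Your Step 3 merely makes explicit the domain-control issue for the (non-compactly-supported) approximating fields, which the paper leaves implicit in its well-defined-flow-map formalism.
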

\begin{proof} Consider a Hamiltonian symplectomorphism $F:\Pi\to\Pi$ 
with compact support, let $v(x,t)=\frac{dF_t}{dt}$ be the corresponding family of Hamiltonian 
vector fields with compact supports. The map $F$ is the $C^{\infty}$-limit 
of compositions of time $\frac1n$ 
flow maps of the autonomous Hamiltonian vector fields 
$$v(x,\frac kn), \ \ \ k=0,\dots,n-1,$$ 
as $n\to\infty$. Each of the above Hamiltonian vector 
 fields $v(x,\frac kn)$ can be approximated by Hamiltonian 
 vector fields with Hamiltonians in $\gh$, by Theorems \ref{tlidense} and \ref{tlidense2}. Then $F$ becomes 
 approximated by products of their flows. This implies the statement of Proposition \ref{propham}.
 \end{proof}

\begin{proof} {\bf of Theorem \ref{tc1}: global and local cases.} Each $\Pi$-Hamiltonian symplectomorphism between 
domains in $\Pi$ is a limit of a converging sequence of Hamiltonian symplectomorphisms $\Pi\to\Pi$ 
with compact 
supports in the $C^{\infty}$-topology, by Proposition \ref{hamsup}. The latter symplectomorphisms are, 
in their turn, limits of compositions of well-defined flow maps of vector fields with Hamiltonians from 
the algebra $\gh$, by Proposition \ref{propham}. For every mapping $\delta: C^{\infty}(\gamma)\to\rr_+$ 
the flow maps under question 
lie in $\overline{\mcg(\delta)}$, by Proposition \ref{pmcg}. This proves Theorem \ref{tc1} in global and local cases.
\end{proof}

\begin{proof} {\bf of Corollary \ref{c1} and Theorem \ref{tc2}: global and local cases.} 
Let us first consider the global case: $\gamma$ 
is compact.  Fix some $\alpha>0$ and $k\in\nn$. We can choose a 
mapping $\delta:C^{\infty}(\gamma)\to\rr_+$ so that for every $h\in C^{\infty}(\gamma)$ and every 
$\var\in[0,\delta(h))$ the hypersurface $\gamma_{\var,h}$, see (\ref{gaef}), is $(\alpha,k)$-close 
to $\gamma$. Then the statement of Corollary \ref{c1} follows immediately from Theorem \ref{tc1}. 
In the case, when $\gamma$ is local, we apply the above argument for functions $h$ with compact support. 
This proves Theorem \ref{tc2}.
\end{proof}

\subsection{Special case of germ of planar curve. Density in symplectomorphisms: proof of Theorem \ref{tds3}} 
To prove Theorem \ref{tds3}, which states similar approximability of (a priori non-Hamiltonian) 
symplectomorphisms in the case, when $\gamma$ is a germ of planar curve, we use the following three well-known propositions  
communicated to the author by Felix Schlenk. 

\begin{proposition} \label{exsm} Let $M$ be an oriented manifold. Let $V\subset M$ be 
an open subset with  smooth boundary, whose closure is compact and 
$C^{\infty}$-smoothly diffeomorphic to a closed ball $\overline B^n$. Then every $C^{\infty}$-smooth orientation-preserving diffeomorphism 
$F:\overline V\to F(\overline V)\Subset M$ extends to a $C^{\infty}$-smooth diffeomorphism $F:M\to M$ isotopic to 
identity with compact support. The isotopy can be chosen $C^{\infty}$-smooth and so that its diffeomorphisms coincide 
with the identity outside some (one and the same) compact subset.
\end{proposition}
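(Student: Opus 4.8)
The plan is to reduce the statement to a standard fact: that a compactly supported orientation-preserving diffeomorphism of $\rr^m$ is isotopic to the identity through compactly supported diffeomorphisms, and more precisely that any orientation-preserving diffeomorphism of a closed ball agreeing with the identity near the boundary is smoothly isotopic to the identity rel boundary. Concretely, I would first observe that since $\overline V$ is diffeomorphic to a closed ball and $F(\overline V)$ is contained in a chart of $M$ (being contained in a compact subset that we may enlarge to a coordinate ball, using that $F(\overline V)$ is diffeomorphic to a ball hence has a neighborhood diffeomorphic to $\rr^m$), we may work entirely inside a single chart $U\cong\rr^m$ containing both $\overline V$ and $F(\overline V)$. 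Thus without loss of generality $M=\rr^m$, $V$ is a smoothly embedded open ball, and $F:\overline V\to F(\overline V)\Subset\rr^m$ is an orientation-preserving embedding.

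Next I would extend $F$ over all of $\rr^m$. First, using the isotopy extension theorem (or simply the fact that any two smoothly embedded closed balls in $\rr^m$ are ambiently isotopic), choose a compactly supported diffeomorphism $G$ of $\rr^m$ carrying $\overline V$ onto the standard unit ball $\overline B$; replacing $F$ by $F\circ G^{-1}$ reduces us to $V=B$ the open unit ball. Now I have an orientation-preserving embedding $F:\overline B\to\rr^m$. By the disc theorem (Palais–Cerf), any two orientation-preserving embeddings of $\overline B$ into the connected manifold $\rr^m$ are ambiently isotopic through compactly supported diffeomorphisms; applying this to $F$ and the inclusion $\overline B\hookrightarrow\rr^m$ yields a compactly supported diffeomorphism $\Phi$ of $\rr^m$, isotopic to the identity with compact support, such that $\Phi|_{\overline B}=F$. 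Composing back with $G$ gives the desired extension over the original $M$, and the isotopies compose to give a compactly supported smooth isotopy from the identity to the extended $F$.

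The one point needing a little care — and the main obstacle — is passing from the local picture back to $M$: I need the isotopy to have compact support in $M$, which is automatic once everything is performed inside a fixed coordinate ball $U\Subset M$, but I must make sure at the outset that such a $U$ exists containing $\overline V\cup F(\overline V)$. Since $\overline V$ is diffeomorphic to a ball it sits inside a coordinate ball; its image $F(\overline V)$ is compact and, being diffeomorphic to a ball, also has a neighborhood diffeomorphic to $\rr^m$ — but a priori $\overline V$ and $F(\overline V)$ may not lie in a \emph{common} chart. This is repaired by a preliminary ambient isotopy of $M$ with compact support pushing $F(\overline V)$ into a small neighborhood of $\overline V$ (possible since $M$ is connected and $F(\overline V)$ is a smoothly embedded ball, again by the disc theorem applied in $M$); absorbing this preliminary isotopy into the final answer, the whole construction then takes place in one chart and all supports are compact in $M$. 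The orientation hypothesis on $F$ is exactly what makes the disc theorem applicable (otherwise the extension is obstructed by $\pi_0$ of the diffeomorphism group when $M$ is orientable), and smoothness of the boundary of $V$ plus $\overline V\cong$ ball is what lets us normalize to the standard ball.
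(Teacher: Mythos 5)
Your proposal is correct. The underlying mathematics is the same as the paper's, but the packaging differs: the paper does not cite the Palais--Cerf disc theorem, it essentially reproves it, by establishing the lemma that the space of orientation-preserving embeddings $\overline B^n\to M$ is connected (via shrinking to a small ball, an ambient isotopy of centers, and the Alexander trick inside the image of an extended embedding $\tilde\varphi_1(B^n_{1+\delta})$), and then converting the resulting path of embeddings into an ambient compactly supported isotopy by integrating the generating vector field after a compactly supported extension to $M\times[0,1]$. You instead invoke the disc theorem as a black box, which buys brevity at the price of an external citation; the paper's route buys self-containedness. One structural remark: your reduction to a single chart is redundant. Since you already apply the disc theorem inside $M$ (for the ``preliminary isotopy'' pushing $F(\overline V)$ near $\overline V$), you could apply it once and for all to the two orientation-preserving embeddings $\varphi$ and $F\circ\varphi$ of $\overline B^n$ into $M$; the resulting compactly supported ambient isotopy $\Phi_t$ with $\Phi_1\circ\varphi=F\circ\varphi$ already gives $\Phi_1|_{\overline V}=F$, so the normalization of $\overline V$ to the standard ball and the hunt for a common coordinate ball can be dropped entirely. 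Two small points worth making explicit if you write this up: $F$, being smooth on the closed set $\overline V$, extends to an embedding of a slightly larger ball (this is what makes $F\circ\varphi$ an embedding of the closed disc in the sense required by the disc theorem, and the paper states it explicitly), and the statement implicitly uses that $\overline V$ and $F(\overline V)$ lie in the same component of $M$, which holds in the intended application.
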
 
\begin{proof} (Felix Schlenk). {\bf Step 1.}
It suffices to show that

\begin{lemma}
The space of smooth orientation-preserving embeddings $\varphi \colon \overline B^n \to M$ is connected.
\end{lemma}

Indeed, given $V$ as in the proposition, 
choose a diffeomorphism $\varphi \colon \overline B^n \to V$.
By the lemma, we find a smooth family $\varphi_t \colon \overline B^n \to M$
of smooth embeddings, $t \in [0,1]$,
with $\varphi_0=\varphi$ and $\varphi_1 = F \circ \varphi$.
Consider the vector field $v(x,t) = \frac{d}{dt} \varphi_t(x)$
that generates this isotopy.
The vector field $v$ is defined on the compact subset 
$\left\{ (x,t) \mid x \in \varphi_t(\overline B^n) \right\}$ of $M \times [0,1]$.
Choose any smooth extension $\tilde v$ of $v$ to $M \times [0,1]$
that vanishes outside a compact set. 
Then the flow maps of $\tilde v$, $t \in [0,1]$, form the desired isotopy.

\medskip
\noindent 
{\bf Step 2.} Proof of the lemma.

\medskip \noindent
Let $\varphi_0, \varphi_1 \colon \overline B^n \to M$
be two smooth orientation-preserving embeddings of the closed unit ball 
$\overline B^n=\overline B^n_1$.
Take a smooth isotopy $g_t$, $t \in [0,1]$, of $M$
that moves $\varphi_0(0)$ to $\varphi_1(0)$.
Choose $\varepsilon >0$ such that 
$$
g_1 \bigl( \varphi_0 (\overline B_\varepsilon^n) \bigr) \,\subset\, \varphi_1(\overline B^n) .
$$
Consider the smooth family of embeddings $\overline B^n_1 \to M$ defined by
\begin{equation} \label{Alexander.partial}
\varphi_0^s (x) \,=\, \varphi_0(sx), \quad s \in [\varepsilon, 1] .
\end{equation}
By definition, $\varphi_1$ is the restriction of a smooth embedding 
$\tilde \varphi_1$ of the open ball $B_{1+\delta}^n$.
We now have two smooth orientation-preserving embeddings $g_1 \circ \varphi_0^\varepsilon, \varphi_1 \colon \overline B_1^n \to \tilde \varphi_1 (B_{1+\delta}^n)$.
Since $\tilde \varphi_1 (B_{1+\delta}^n)$ is diffeomorphic to $\RR^n$,
the classical Alexander trick shows that the embeddings
$g_1 \circ \varphi_0^\varepsilon, \varphi_1$
can be connected, see e.g.\ \cite[Appendix A, formula (A.1)]{schlenk}.
Since $g_1 \circ \varphi_0^\varepsilon$ is connected to $\varphi_0^\varepsilon$
by $g_t\circ\varphi_0^\var$, and since $\varphi_0^\varepsilon$ is connected to $\varphi_0$
by \eqref{Alexander.partial},
the lemma and  the proposition are proven. 
\end{proof}

\begin{proposition} \label{propschl} Let $M$ be a two-dimensional 
symplectic manifold, and let $V\subset M$ be 
an open subset with compact closure and smooth boundary. 
Let $\overline V$ be $C^{\infty}$-smoothly diffeomorphic to a 
disk. Then every $C^{\infty}$-smooth symplectomorphism $F:\overline V\to F(\overline V)\Subset M$ can be extended 
to a $C^{\infty}$-smooth symplectomorphism $F:M\to M$ with compact support. Moreover, $F$ can be chosen isotopic 
to the identity via $C^{\infty}$-symplectomorphisms with compact supports contained in one and the same 
compact subset in $M$. 
\end{proposition}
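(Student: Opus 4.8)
The plan is to reduce to the smooth (non-symplectic) extension of Proposition \ref{exsm} and then repair it by Moser's stability argument, using the two-dimensional miracle that every nowhere-zero $2$-form is symplectic and that every convex combination of two area forms inducing the same orientation is again nowhere zero.

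\emph{Step 1 (smooth extension).} First I would apply Proposition \ref{exsm} to $F$, viewed as an orientation-preserving smooth embedding of a closed ball. After shrinking $\overline V$ slightly (a symplectomorphism of a closed disk being the restriction of one on an open neighbourhood), I may assume $F$ is symplectic on an open neighbourhood of $\overline V$ and arrange that the resulting compactly supported diffeomorphism $\Phi:M\to M$ satisfies $\Phi=F$ on a neighbourhood of $\overline V$, together with a smooth isotopy $\Phi_s$, $s\in[0,1]$, $\Phi_0=\mathrm{id}$, $\Phi_1=\Phi$, all supported in one fixed compact set $L$. Put $\mu_s:=\Phi_s^*\omega-\omega$: these are compactly supported closed $2$-forms with $\mu_0=0$ and $\mu_1$ vanishing on a neighbourhood of $\overline V$.

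\emph{Step 2 (primitives).} Integrating $\Phi_s^*\omega$ over a compact region with smooth boundary containing $L$ in its interior — which $\Phi_s$ maps to itself, being the identity near and outside the boundary — gives $\int_M\mu_s=0$ by a degree/Stokes computation. Since $M$ is connected and oriented, integration identifies $H^2_c(M;\rr)\cong\rr$, so one obtains a smooth family of compactly supported $1$-forms $\beta_s$ with $d\beta_s=\mu_s$ and $\beta_0=0$, all supported in a fixed compact set. Next I would adjust $\beta_1$ near $\overline V$: as $\beta_1$ is closed on the ball-like neighbourhood on which $\mu_1=0$, it equals $dh$ there for some smooth $h$; cutting $h$ off to a global compactly supported $\hat h$ that agrees with $h$ on a smaller neighbourhood $V''\supset\overline V$ and replacing $\beta_s$ by $\beta_s-s\,d\hat h$, I keep $d\beta_s=\mu_s$ and $\beta_0=0$, and gain $\beta_1\equiv 0$ on $V''$.

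\emph{Step 3 (Moser and conclusion).} For each $s$ the path $\omega^{(s)}_t:=(1-t)\omega+t\,\Phi_s^*\omega=\omega+t\mu_s$, $t\in[0,1]$, is a path of symplectic forms — this is where dimension two is used — so the time-$t$ flow of the compactly supported vector field $X^{(s)}_t$ defined by $\iota_{X^{(s)}_t}\omega^{(s)}_t=-\beta_s$ is well-defined and its time-one map $\psi_s$ satisfies $\psi_s^*(\Phi_s^*\omega)=\omega$. Then $\psi_s$ is compactly supported in a fixed compact set, depends smoothly on $s$, satisfies $\psi_0=\mathrm{id}$ (as $\beta_0=0$) and $\psi_1|_{\overline V}=\mathrm{id}$ (as $X^{(1)}_t$ vanishes on $V''\supset\overline V$, since there $\mu_1=0$ and $\beta_1=0$, and $\omega^{(1)}_t$ is nondegenerate). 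Setting $\Sigma_s:=\Phi_s\circ\psi_s$, each $\Sigma_s$ is a compactly supported symplectomorphism of $(M,\omega)$, since $\Sigma_s^*\omega=\psi_s^*\Phi_s^*\omega=\omega$, all supported in one fixed compact set, with $\Sigma_0=\mathrm{id}$ and $\Sigma_1|_{\overline V}=\Phi|_{\overline V}=F$. Thus $\Sigma_1$ is the required extension and $\{\Sigma_s\}$ the required isotopy. The step I expect to be the main obstacle is the simultaneous bookkeeping of the primitive $\beta_s$: it must be compactly supported (forcing, hence needing, the vanishing $\int_M\mu_s=0$ and the orientation/degree argument behind it), must vanish at $s=0$ so that $\Sigma_0=\mathrm{id}$, and must vanish near $\overline V$ at $s=1$ so that the Moser flow fixes $\overline V$ and $\Sigma_1$ genuinely restricts to $F$; contractibility of the disk $\overline V$ is precisely what lets these three requirements be met at once.
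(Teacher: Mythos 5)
Your argument is correct and follows essentially the same route as the paper: extend $F$ smoothly by Proposition \ref{exsm}, interpolate linearly between $\omega$ and $\Phi_s^*\omega$ (nondegenerate because in dimension two a convex combination of equally oriented area forms is again an area form), and correct by a relative Moser isotopy fixing $\overline V$ and the complement of a compact set. The only difference is that you carry out the relative Moser step by hand (constructing the compactly supported primitives $\beta_s$ with $\beta_0=0$ and $\beta_1\equiv 0$ near $\overline V$), where the paper cites it from McDuff--Salamon.
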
  

\begin{proof} (Felix Schlenk). 
Consider an arbitrary extension $G$ of $F$ to a diffeomorphism $G:M\to M$ 
with compact support that is isotopic to the identity through diffeomorphisms $G_s:M\to M$   
 with compact supports contained in a common compact subset $K\subset M$, 
 $K\supset(\overline V\cup F(\overline V))$; $s\in[0,1]$, $G_0=Id$, $G_1=G$. It exists by 
Proposition \ref{exsm}. Set $W:=M\setminus K$. 
 Let $\omega$ be the symplectic form of the manifold $M$. Consider its pullback
 $\alpha:=G^*(\omega)$, which is a symplectic form on $M$. 
 The forms $\alpha_t:=t\alpha+(1-t)\omega$, $t\in[0,1]$, are also symplectic: they are 
 obviously closed, and they are positive area forms, since $M$ is two-dimensional. 
 This is the place we use two-dimensionality.  The forms $\alpha_t$ are  cohomologous to $\omega$, 
 the corresponding areas of the manifold $M$ are equal, and $\alpha_t\equiv\omega$ 
 on $\overline V\cup W$.  One has $\alpha_0=\omega$, $\alpha_1=\alpha$. 
 There exists a family of diffeomorphisms $S_t:M\to M$, $t\in[0,1]$, $S_0=Id$, 
 such that $S_t^*(\alpha_t)=\omega$ and $S_t$ coincide with the identity on the set $V\cup W$, 
  where $\omega=\alpha_t$. This follows from the relative version of Moser's deformation argument 
  \cite[exercise 3.18]{mcs}; see also \cite{moser},  \cite[p.11]{hof-zehn-mos}.  The composition 
 $\Phi:=G\circ S_1:M\to M$ preserves the symplectic form $\omega$ and coincides with $G$ on $V\cup W$. 
 Hence,  $\Phi|_V=F$, $\Phi|_W=Id$. 
 
 Applying the above construction of the symplectomorphism $\Phi$ to the diffeomorphisms $G_s$ instead of $G$
 yields a smooth isotopy $\Phi_s$ of the symplectomorphism $\Phi=\Phi_1$ to the identity via 
 symplectomorphisms with supports in $K$. (But now $G_s$ is not necessarily symplectic on $V$,
and   $(\Phi_s)|_V$ is not necessarily the identity.) The proposition is proved.
 \end{proof}
 
\begin{proposition} \label{symham} Every symplectomorphism  with compact support of an open 
topological disk equipped with a symplectic structure 
 is a Hamiltonian symplectomorphism with compact support. 
 \end{proposition}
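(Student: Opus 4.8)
The plan is to reduce the proposition to two classical ingredients — a Moser-type normalisation of area forms on a disk, and the symplectic Alexander trick — and then to pass from the resulting compactly supported \emph{symplectic} isotopy to a \emph{Hamiltonian} one using the simple connectedness of the disk.

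So let $D$ be the given open disk with symplectic (area) form $\omega$, and let $F$ be a symplectomorphism of $D$ with $\supp F\subset K$ for some compact $K\Subset D$. First I would choose a closed embedded disk $\overline U\subset D$ with $K\subset\operatorname{int}\overline U$ and normalise the form there: by Moser's theorem on equivalence of volume forms of equal total volume (\cite{moser}; the version for a disk with boundary follows by the same deformation argument as in the proof of Proposition~\ref{propschl}, applied to $\omega$ and to the pullback of $dx\wedge dy$ under an auxiliary diffeomorphism of $\overline U$ onto a round closed disk of the radius making the total areas agree), there is a symplectomorphism $\phi\colon(\operatorname{int}\overline U,\omega)\to(B_\rho,dx\wedge dy)$ onto a round open disk. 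Since $F$ is the identity off $K\subset\operatorname{int}\overline U$, it restricts to a compactly supported symplectomorphism of $\operatorname{int}\overline U$; conjugating by $\phi$ and extending by the identity, $\wt F:=\phi\circ F\circ\phi^{-1}$ is a compactly supported symplectomorphism of $(\rr^2,dx\wedge dy)$ with $\supp\wt F\subset B_{r_0}$ for some $r_0<\rho$.

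Next I would run the Alexander isotopy $\wt F_t(x):=t\,\wt F(x/t)$ for $t\in(0,1]$, with $\wt F_0:=\mathrm{id}$. The chain rule gives $D\wt F_t(x)=(D\wt F)(x/t)$, hence $\det D\wt F_t\equiv 1$, so each $\wt F_t$ preserves $dx\wedge dy$; moreover $\supp\wt F_t\subset B_{tr_0}\subset B_{r_0}$, the family is smooth in $(x,t)$ up to and including $t=0$ (the standard verification for the Alexander trick), $\wt F_0=\mathrm{id}$ and $\wt F_1=\wt F$. Conjugating back by $\phi$ and extending by the identity outside $\phi^{-1}(B_{r_0})$ produces a smooth path $F_t$, $t\in[0,1]$, of symplectomorphisms of $(D,\omega)$ with $F_0=\mathrm{id}$, $F_1=F$, and all supports $\supp F_t$ contained in the single compact set $K_0:=\phi^{-1}(\overline B_{r_0})\Subset D$.

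Finally I would upgrade this to the Hamiltonian statement. The generating vector fields $X_t:=\bigl(\tfrac{d}{dt}F_t\bigr)\circ F_t^{-1}$ are symplectic (differentiate $F_t^*\omega=\omega$) and supported in $K_0$, so the $1$-forms $\iota_{X_t}\omega$ are closed, compactly supported and smooth in $t$. Since $D$ is simply connected, $\iota_{X_t}\omega=dH_t$, and normalising the primitive to vanish at one fixed point of the connected open set $D\setminus\overline U$ (on which every $\iota_{X_t}\omega$ vanishes) makes $t\mapsto H_t$ a smooth family of functions supported in a fixed compact subset of $D$. Thus each $X_t$ is Hamiltonian with compact support, uniformly in $t$, and therefore $F=F_1$ is a Hamiltonian symplectomorphism with compact support. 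The only points needing genuine care are the Moser normalisation to a round-disk model — required precisely so that the rescaling $x\mapsto x/t$ keeps the isotopy inside the domain — and the smoothness of the Alexander isotopy at $t=0$; everything else is routine.
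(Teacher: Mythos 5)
The opening Moser normalisation and, especially, the closing step of your argument are fine: producing the compactly supported Hamiltonians $H_t$ from the closed, compactly supported $1$-forms $\iota_{X_t}\omega$ by choosing the primitive to vanish on the connected set where the form vanishes is correct, and is essentially the paper's own final paragraph (the paper is even slightly less explicit about the compact support of $H_t$ than you are). The gap is in the middle step. The compactly supported, ``zoom-out'' Alexander isotopy $\wt F_t(x)=t\,\wt F(x/t)$ is \emph{not} smooth in $(x,t)$ up to $t=0$; it is not even a continuous path into the diffeomorphism group for the $C^1$-topology. Indeed, by your own computation $D\wt F_t(x)=D\wt F(x/t)$, so $\sup_x\lVert D\wt F_t(x)-\mathrm{Id}\rVert=\sup_u\lVert D\wt F(u)-\mathrm{Id}\rVert$ is independent of $t$: along each ray $x=tu_0$ the derivative stays frozen at $D\wt F(u_0)\neq\mathrm{Id}$, so $\partial\wt F_t/\partial x$ has no limit as $(x,t)\to(0,0)$, and the generating vector field of the isotopy blows up in $C^1$ like $1/t$. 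The version of the Alexander trick that \emph{is} smooth at the degenerate endpoint is the ``zoom-in'' one for embeddings of a ball, $\varphi_t(x)=\varphi(tx)/t\to D\varphi(0)x$, where smoothness at $t=0$ follows from Hadamard's lemma applied to the germ of $\varphi$ at the origin; your formula is the conjugate of a diffeomorphism by large dilations rather than small ones and does not inherit this property. (This is precisely why the existence of smooth compactly supported isotopies to the identity is a nontrivial problem in higher dimensions.)

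To repair the proof you must obtain the smooth compactly supported symplectic isotopy $F_t$ by other means: either invoke Smale's theorem that the group of compactly supported diffeomorphisms of the plane is connected and then correct to symplectomorphisms by Moser, or follow the paper's route — Proposition \ref{exsm} gives a smooth compactly supported isotopy of diffeomorphisms (its proof uses the zoom-in Alexander trick for embeddings of balls together with the connectedness of the group of orientation-preserving linear maps), and Proposition \ref{propschl} then corrects this isotopy to one through symplectomorphisms with supports in a fixed compact set via the relative Moser deformation argument. With that replacement, your concluding Hamiltonian step goes through verbatim.
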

 
 \begin{proof} Let $\omega$ denote the symplectic form. 
 Every symplectomorphism in question is isotopic to the identity via a 
 smooth family $F_t$ of symplectomorphisms with compact support, by  Proposition \ref{propschl}. 
 The derivatives $X_t:=\frac{dF_t}{dt}$  form a $t$-dependent family of symplectic vector fields. Hence, $L_{X_t}\omega=d(i_{X_t}\omega)=0$, by Cartan's formula. Thus, 
 $i_{X_t}\omega$ is a closed 1-form and hence, exact, $i_{X_t}\omega=dH_t$, since the underlying 
 manifold is a topological disk. Therefore, the vector fields  $\frac{dF_t}{dt}$ are Hamiltonian 
 with compact support. This proves the proposition.
 \end{proof}
 
 \begin{remark} Proposition \ref{symham} is a part of the following well-known fact:  
 {\it the group of compactly-supported symplectomorphisms of an open symplectic 
 topological disk is contractible and 
  path-connected by smooth paths.} The same statement holds for 
 symplectomorphisms of $\rr^4$ (Gromov's theorem \cite[p.345, theorem 9.5.2]{mcs2}). In higher dimensions 
 it is not known whether a similar statement is true. 
 For every $k\in\nn$ there exists an exotic symplectic structure on $\rr^{4k}$ admitting a symplectomorphism 
$\rr^{4k}\to\rr^{4k}$ with compact support  that is not smoothly isotopic to the identity in the class of 
(not necessarily symplectic) diffeomorphisms \cite[theorem 1.1]{cks}.
 \end{remark}

\begin{proof} {\bf of Theorem \ref{tds3}.} Let $\gamma$ be a germ of planar curve. Let $V\subset\Pi$ be an arbitrary simply connected domain. 
Let us show that 
each symplectomorphism $F:V\to F(V)\subset\Pi$ can be approximated by elements of the pseudogroup 
$\mcg(\delta)$ for every $\delta$. Consider an exhaustion $V_1\Subset V_2\Subset\dots$ 
of the domain $V=V_{\infty}$ by simply connected domains with compact closures in $V$ and smooth 
boundaries.  Each restriction $F|_{V_n}$ extends to a symplectomorphism 
$\Pi\to\Pi$ with compact support, by 
Proposition \ref{propschl}. The latter extension is Hamiltonian with compact support, by  Proposition \ref{symham}. 
The elements of the pseudogroup $\mcg(\delta)$ accumulate to the restrictions 
to $V$ of all the Hamiltonian symplectomorphisms $\Pi\to\Pi$, by Theorems \ref{tc1}, \ref{tc2}. Therefore, some sequence  in $\mcg(\delta)$ converges to $F$ 
on $V$ in the $C^{\infty}$-topology. This proves the analogue of Theorem \ref{tc1} with density
 in the pseudogroup of 
symplectomorphisms between simply connected domains in $\Pi$. 
The proof of similar analogue of Corollary \ref{c1} repeats the proof of Corollary \ref{c1} with obvious 
changes. Theorem \ref{tds3} is proved.  
  \end{proof}
 \section{Proof for hypersurfaces in Riemannian manifolds}
 Here we prove Theorems \ref{forh1} and \ref{theriem}.
 
 \begin{proof} {\bf of Theorem \ref{forh1}.} Fix a point $x\in\gamma$. Let $V(x)$ be its neighborhood in $M$ 
 equipped with normal coordinates centered at $x$ for the metric of the ambient manifold $M$. We deal with two metrics 
 on $V$: the metric of the manifold $M$ (which will be denoted by $g$) and the Euclidean metric in the 
 normal coordinates (which will be denoted by $g_{E}$). 
Consider the reflections $\mct_{\gamma_\var}$ with $\var\in[0,\delta]$ as mappings acting on compact subsets 
in the unit ball bundle $T_{<1}\gamma$  (after   identification (\ref{idgeo}), see Subsection 1.4). The mapping $\mct$ of reflection from $\gamma$ in the metric $g$ has the same 1-jet, as 
the reflection from $\gamma$ in the metric $g_E$, at each point of the fiber over $x$: at each point $(x,w)$, $w\in T_x\gamma$, with $||w||:=||w||_g=||w||_{g_E}<1$. This holds, since 
the  metrics in question have the same 1-jet at $x$. Similarly, 
the mappings $\mct_{\gamma_\var}$ constructed for both metrics have the same 1-jets at points $(x,w,0)$ as functions 
on $(T_{<1}\gamma)\times[0,\delta]$. Therefore, 
the derivatives $v_f=\frac{d\Delta\mct_{\var}}{d\var}|_{\var=0}$ calculated for both metrics coincide at all points 
$(x,w)$ with $w\in T_x\gamma$, $||w||<1$. 
This together with Theorem \ref{forh} applied to the Euclidean metric $g_E$ implies that  $v_f$ coincides with 
the  Hamiltonian 
vector field of the Hamiltonian function $H_f(x,w)=-2\sqrt{1-||w||^2}f(x)$ at each point $(x,w)$ as above. 
Hence, the field $v_f$ calculated for the given metric $g$ coincides with the latter Hamiltonian vector field 
everywhere, since the choice of the point $x$ was arbitrary. 
Theorem \ref{forh1} is proved.
\end{proof}
 
 \begin{proof} {\bf of Theorem \ref{theriem}.} The Lie algebra generated by the Hamiltonian functions $H_f(x,w)$ 
 of the above vector fields $v_f$ on $T_{<1}\gamma$ is dense in the space of all the $C^{\infty}$-smooth 
 functions on $T_{<1}\gamma$ (Theorem \ref{tlidf}). Therefore, the Lie algebra generated by the vector fields $v_f$ 
 is dense in the Lie algebra of Hamiltonian vector fields on $\Pi$ (in both cases, when $\gamma$ is either a closed 
 hypersurface, as above, or a germ). Afterwards the proof of Theorem \ref{theriem} repeats the arguments from 
 Section 4. 
 \end{proof}

    \section{Acknowledgements}
 
 I wish to thank Sergei Tabachnikov, Felix Schlenk and Marco Mazzucchelli for helpful discussions. I wish 
 to thank Felix Schlenk for  
 a  careful reading of the paper and very helpful remarks and suggestions.


\begin{thebibliography}{}


\bibitem{ar2} Arnold, V. {\it Mathematical methods of classical mechanics.}  
Springer-Verlag, 1978. 

\bibitem{ar3} Arnold, V. {\it Contact geometry and wave propagation.} 
Monogr. \textbf{34} de l'Enseign. Math., Universit\'e de G\'en\`eve, 1989. 

\bibitem{ban} Banyaga, A. {\it The structure of classical diffeomorphism groups.}  
Mathematics and its Applications, \textbf{400}. 
Kluwer Academic Publishers Group, Dordrecht; Boston, 1997.

\bibitem{cks} Casals, R.; Keating, A.; Smith, I.   
{\it Symplectomorphisms of exotic discs.}  
J. \'Ecole Polytechnique, \textbf{5} (2018), 289--316.

\bibitem{hof-zehn-mos} Hofer, H.; Zehnder, E. {\it Symplectic Invariants and Hamiltonian Dynamics.} 
Birkh\"auser, Basel, 1994.

\bibitem{mm} Marvizi S., Melrose R. {\it Spectral invariants of convex planar regions.} 
J. Diff. Geom. \textbf{17} (1982), 475--502.




\bibitem{mcs} McDuff, D.; Salamon, D. {\it Introduction to symplectic topology.} 
Clarendon Press, Oxford, Oxford University Press, New York,  1998. Second edition.

\bibitem{mcs2} McDuff, D.; Salamon, D. {\it J-holomorphic curves and symplectic topology.} 
AMS Colloquium Publications, \textbf{52} (2012). Second edition. 

\bibitem{melrose1} Melrose, R. {\it Equivalence of glancing hypersurfaces.} Invent. Math., 
\textbf{37} (1976), 165--192.

\bibitem{melrose2} Melrose, R. {\it Equivalence of glancing hypersurfaces 2.} 
Math. Ann. \textbf{255} (1981), 159--198. 

\bibitem{moser} Moser, J. {\it On the volume elements on a manifold.} Trans. Amer. Math.
Soc. \textbf{120} (1965) 286--294.

\bibitem{peirone} Peirone, R. {\it Billiards in tubular neighborhoods of manifolds of codimension 1.} 
Comm. Math. Phys. \textbf{207:1} (1999), 67--80.

\bibitem{perline} Perline, R. {\it Geometry of Thin Films.} J. Nonlin. Science \textbf{29} 
(2019), 621--642.

\bibitem{ptt} Plakhov, A.; Tabachnikov, S.; Treschev D. {\it Billiard transformations of parallel flows: A periscope theorem.} J. Geom. Phys., \textbf{115:5} (2017), 157--166.

\bibitem{schlenk} Schlenk, F. {\it Embedding problems in symplectic geometry.}   
De Gruyter Expositions in Mathematics, \textbf{40} (2008). 

\bibitem{tab95} Tabachnikov, S. {\it Billiards.}  Panor. Synth. \textbf{1} (1995), vi$+$142.

\bibitem{tab} Tabachnikov, S. {\it Geometry and Billiards,} Amer. Math. Soc. 2005. 
\end{thebibliography}
\end{document}